\newtheorem{Th}{Theorem}[section]
\newtheorem{Prop}[Th]{Proposition}
\newtheorem{Lem}[Th]{Lemma}
\newtheorem{Rem}[Th]{Remark}
\newenvironment{altproof}[1]
{\noindent
{\em Proof of {#1}}.}
{\nopagebreak\mbox{}\hfill $\Box$\par\addvspace{0.5cm}}
\newcommand{\vp}{\varphi}
\newcommand{\eps}{\varepsilon}
\newcommand{\R}{\mathbb{R}}
\newcommand{\N}{\mathbb{N}}
\newcommand{\cC}{{\mathcal C}}
\newcommand{\cD}{{\mathcal D}}
\newcommand{\cF}{{\mathcal F}}
\newcommand{\cG}{{\mathcal G}}
\newcommand{\cH}{{\mathcal H}}
\newcommand{\cM}{{\mathcal M}}
\newcommand{\cO}{{\mathcal O}}
\newcommand{\cQ}{{\mathcal Q}}
\newcommand{\cS}{{\mathcal S}}
\newcommand{\cX}{{\mathcal X}}
   \newcommand\D{\mathcal{D}}
\newcommand{\weakto}{\rightharpoonup}
\newcommand{\tu}{\widetilde{u}}
\newcommand{\tv}{\widetilde{v}}
  \newcommand{\curl}{\nabla \times}
  \renewcommand{\div}{\mathrm{div}\,}
\numberwithin{equation}{section}
\newcommand{\supp}{\mathrm{supp}\,}
\newcommand{\rn}{\R^N}
\newcommand{\UU}{\mathbf{U}}
\newcommand{\SO}{\cS\cO}
\newcommand{\codim}{\mathrm{co}\,\mathrm{dim}\,}
\begin{document}


\title{Normalized solutions to at least mass critical problems: singular polyharmonic equations and related curl-curl problems}

\author[B. Bieganowski]{Bartosz Bieganowski}
	\address[B. Bieganowski]{\newline\indent
			Faculty of Mathematics, Informatics and Mechanics, \newline\indent
			University of Warsaw, \newline\indent
			ul. Banacha 2, 02-097 Warsaw, Poland}	
			\email{\href{mailto:bartoszb@mimuw.edu.pl}{bartoszb@mimuw.edu.pl}}

\author[J. Mederski]{Jaros\l aw Mederski}
	\address[J. Mederski]{\newline\indent
			Institute of Mathematics,
			\newline\indent 
			Polish Academy of Sciences,
			\newline\indent 
			ul. \'Sniadeckich 8, 00-656 Warsaw, Poland
	}
	\email{\href{mailto:jmederski@impan.pl}{jmederski@impan.pl}}

\author[J. Schino]{Jacopo Schino}
\address[J. Schino]{\newline\indent
	Department of Mathematics,
	\newline\indent 
	North Carolina State University,
	\newline\indent 
	2311 Stinson Drive, 27607 Raleigh, NC, USA
	\newline\indent
	and
	\newline\indent
	Faculty of Mathematics, Informatics and Mechanics,
	\newline\indent 
	University of Warsaw,
	\newline\indent 
	ul. Banacha 2, 02-097 Warsaw, Poland
}
\email{\href{mailto:j.schino2@uw.edu.pl}{j.schino2@uw.edu.pl}}		
	
	\date{}	\date{\today} \maketitle
	
	\pagestyle{myheadings} \markboth{\underline{B. Bieganowski, J. Mederski, J. Schino}}{
		\underline{Normalized solutions to at least mass critical problems}}

\begin{abstract} 
We are interested in the existence of normalized solutions to the problem
\begin{equation*}
\begin{cases}
(-\Delta)^m u+\frac{\mu}{|y|^{2m}}u + \lambda u =  g(u), \quad x = (y,z) \in \R^K \times \R^{N-K}, \\
\int_{\R^N} |u|^2 \, dx = \rho > 0,
\end{cases}
\end{equation*}
in the so-called at least mass critical regime. We utilize recently introduced variational techniques involving the minimization on the $L^2$-ball. Moreover, we find also a solution to the related curl-curl problem
\begin{equation*}
	\begin{cases}
		\nabla\times\nabla\times\UU+\lambda\UU=f(\UU), \quad x \in \R^N,\\
		\int_{\rn}|\UU|^2\,dx=\rho,\\
	\end{cases}
\end{equation*}
which arises from the system of Maxwell equations and is of great importance in nonlinear optics.

\medskip

\noindent \textbf{Keywords:} normalized solutions, nonlinear polyharmonic equations, singular potentials, curl-curl problems, time-harmonic Maxwell equations, constrained minimization problem, nonlinear optics
   
\noindent \textbf{AMS 2020 Subject Classification:} 35J20, 35J35, 35R11, 35Q55, 78M30
\end{abstract}

\maketitle

\section{Introduction}

In this paper, we look for weak solutions $u\colon\R^N\to\R$ and $\UU\colon\R^N\to\R^N$ to the following constrained problems
\begin{eqnarray}\label{eq}
&&\begin{cases}
(-\Delta)^m u + \frac{\mu}{|y|^{2m}}u + \lambda u =  g(u), \quad x = (y,z) \in \R^K \times \R^{N-K},\\
\int_{\R^N} |u|^2 \, dx = \rho > 0,
\end{cases}\\
&&\label{e-normcurl}
\begin{cases}
\nabla\times\nabla\times\UU+\lambda\UU=f(\UU), \quad x \in \R^{N},\\
\int_{\rn}|\UU|^2\,dx=\rho,
\end{cases}
\end{eqnarray}
where $N \ge K \ge 2m$, $\mu \in \R$ is a real parameter of the {\em singular potential} $V(x):=\frac{\mu}{|y|^{2m}}$, and $g \colon \R\to\R$ and $f \colon \R^N\to\R^N$ are {\em nonlinear terms} that will be discussed later (for the definition of $\nabla \times \nabla \times \UU$ when $N\ge3$, see Section \ref{sec:curlcurl}). Moreover, the \textit{mass} $\rho$ is prescribed, while $\lambda$ is unknown and will arise as a Lagrange multiplier. 

The first problem appears naturally when one studies the Cauchy problem for the time-dependent \textit{polyharmonic} equation
\begin{equation}\label{schroed-t}
\begin{cases}
\mathbf{i} \partial_t \Psi = (-\Delta)^m \Psi + \frac{\mu}{|y|^{2m}}\Psi  - \mathbf{f}(|\Psi|)\Psi, \\
\Psi(0, \cdot) = \psi_0 \in L^2 (\R^N) \setminus \{ 0\},
\end{cases}
\end{equation}
which was considered in \cite{IK,Turitsyn} with $m=2$ to study the stability of solitons in magnetic materials once the effective quasiparticle mass becomes infinite. Note that, formally, \eqref{schroed-t} preserves the $L^2$-norm of the solution. Indeed multiplying \eqref{schroed-t} by $\overline{\Psi}$, integrating, and taking the imaginary part leads to $\frac{d}{dt} \int_{\R^N} |\Psi|^2 \, dx = 0$ and therefore we can define $\rho := \int_{\R^N} |\Psi|^2 \, dx = \int_{\R^N} |\psi_0|^2 \, dx > 0$. The ansatz $\Psi(x, t) = e^{i\lambda t} u(x)$ leads then to \eqref{eq}. The conserved mass is particularly meaningful from the physical point of view; for instance, when $m=1$, it represents the total number of atoms in Bose--Einstein condensation \cite{Malomed}, or
the total power in nonlinear optics \cite{Buryak}. In the latter case, the nonlinear Schr\"odinger equation of the form \eqref{eq} models the propagation of electromagnetic waves travelling through a waveguide \cite{Buryak,Agrawal}. The transport of electromagnetic waves in nonlinear dielectric media is controlled exactly by the nonlinear Maxwell equations, which are challenging to solve analytically. Therefore, by means of some simplifications and approximations, for instance, {\em  scalar approximation} or {\em slowly varying envelope approximation}, the problem \eqref{eq} is intensively studied in nonlinear optics. We emphasize, however, that such approximations may produce {\em non-physical} solutions; see \cite{Akhmediev-etal, Ciattoni-etal:2005} and the references therein.

In this paper, we show that, if $\mu=1$, $K = 2$, $m=1$, $f$ and $g$ are odd and related by
\begin{equation}\label{e-fgh}
|\mathbf{V}|f(\mathbf{V}) = g(|\mathbf{V}|) \mathbf{V}, \quad \mathbf{V} \in \R^N,
\end{equation}
and $u$ solves \eqref{eq}, then
\begin{equation*}
\mathbf{U}(x) = \frac{u(x)}{r} \left( \begin{array}{c}
-x_2 \\ x_1 \\ 0
\end{array} \right), \quad r = \sqrt{x_1^2+x_2^2}
\end{equation*}
solves \eqref{e-normcurl}. In particular, if $N=3$, we find indeed the {\em exact propagation} of time harmonic electromagentic waves, where the electric field is given by $E(x,t)=\UU(x)\cos(\omega t)$ and  Maxwell's equations together with the material constitutive laws are satisfied \cite{Agrawal,BartschMederski1,Stuart:1993,McLeod}. The model nonlinearity is $f(\UU)=|\UU|^2\UU$, hence $g(u)=|u|^2u$, which represents the {\em Kerr effect} in a nonlinear medium. In general, we focus on mass-critical and mass-supercritical problems with
\begin{equation}\label{example}
g(u)=\eta_1 |u|^{2_*-2}u + \eta_2 |u|^{p-2}u, \quad \eta_1\geq 0,\eta_2>0
\end{equation}
as a model in mind, where $2_{*} := 2+\frac{4m}{N}<p<2^*$, $2^*=\frac{2N}{N-2m}$ for $N>2m$, and $2^*=\infty$ for $N=2m$.

Recall that normalized Schr\"odinger equations (i.e., $m=1$) in the mass critical or mass super critical cases have been studied, for instance, in  \cite{JeanjeanLuNorm, BartschMolle, BartschSoaveJFA, BartschSoaveJFACorr, BartschJS2016, Soave, Jeanjean, JeanjeanLe, Schino, CSz}, while the curl-curl problem \eqref{e-normcurl} has not been investigated yet. The polyharmonic case, on the other hand, seems to have been studied only in \cite{Ma_Chang,Phan} and uniquely for $m=2$, in contexts that do not overlap with those debated here. Often in these works, the radial symmetry plays an important role or some particular assumptions must be imposed on the potentials $V$. In the latter case there are only few works; for instance, Noris, Tavares, and Verzini consider a class of trapping potentials, i.e., $\lim_{|x|\to\infty}V(x)=+\infty$, in \cite{Noris}, and Bartsch et. al. assume that the $L^{\infty}$- or $L^{N/2}$-norm of $V$ is sufficiently small  in \cite{BartschMolle}, see also \cite{Ding,Molle}. Clearly,  the singular potential $V(x)=\frac{\mu}{|y|^{2}}$ does not lie in these classes and, in addition, our approach is quite different. Moreover, we do not assume any smallness property on $V$, at least when $\mu>0$. The only piece of work about normalized solutions in the presence of a singular potential, to the best of our knowledge, is \cite{Li_Zou}, which, however, not only studies a different regime for the nonlinear term, but -- additionally -- only consider the spherical case (i.e., $N=K$) for the Schr\"odinger equation (i.e., $m=1$) in dimension $N > 2m = 2$. Let us mention also that the unconstrained problems, i.e., \eqref{eq} and \eqref{e-normcurl} without $L^2$-constraints, have been intensively studied, see for instance \cite{BadBenRol,GMS,LLT,MederskiSchinoSurvey} and the references therein.

Our strategy is to refine and extend a recent minimization technique, which has been introduced in \cite{BM} for the problem \eqref{eq} with $m=1$, $\mu = 0$, and $N \ge 3$, extended in \cite{MS} for systems of equations with Sobolev-critical nonlinearities and in \cite{CLY} for exponential critical ones, and adapted in \cite{Schino} to the mass subcritical case. As we will see in Theorem \ref{th:generalization} below, if $\mu = 0$ and $N > 2m$, then there is a {\em normalized ground state solution} $u$ to \eqref{eq}, which is radial and positive if $m=1$, i.e., $u$ solves \eqref{eq} and $J_0(u)=\inf_{\cM_0\cap\cS} J_0$, where
\begin{align}\label{intro:eq1}
J_0(u)&:=\frac12 \int_{\R^N} |\nabla^{m} u|^2\, dx - \int_{\R^N} G(u) \, dx,\\
\label{intro:M0}
\cM_0&:=\left\{ u \in H^{m}(\R^N) \setminus \{0\} \ : \  \int_{\R^N} |\nabla^m u|^2 \, dx=\frac{N}{2m} \int_{\R^N} H(u) \, dx \right\}, \\
\label{def:nabla-m}
\nabla^m u &:=
\begin{cases}
\Delta^{m/2} u & \text{if } m  \text{ is even,}\\
\nabla \Delta^{(m-1)/2} u & \text{if } m \text{ is odd,}
\end{cases}
\end{align}
$\cS$ is the sphere in $L^2(\R^N)$ centred at $0$ with radius $\sqrt{\rho}$, and $H(u) := g(u)u - 2 G(u)$, $G(u)=\int_0^ug(s)\,ds$. The nonlinearity $g$ may satisfy general growth conditions that cover the example \eqref{example} -- see, e.g., \cite{BM,JeanjeanLuNorm}. 
We would like to emphasize that if $\mu>0$, then there are no normalized ground state solutions to \eqref{eq} with the natural extension of the definition of the constraint $\cM_0$, which will be demonstrated in Theorem \ref{thm:nonexistence} below.

In order to solve \eqref{eq} with $\mu\neq 0$ we use the scaling property of the Laplace operator involving the singular potential and we show that every nontrivial weak solution to \eqref{eq} invariant with respect to the action of $\cG(K) := \cO(K) \times \mathrm{id}_{N-K}$ lies in the following constraint 
\begin{equation}\label{def:M}
\cM := \left\{ u \in H^{m}_{\cG(K)} (\R^N) \setminus \{0\} \ : \  \int_{\R^N} |\nabla^{m} u|^2 + \frac{\mu}{|y|^{2m}} u^2 \, dx=\frac{N}{2m} \int_{\R^N} H(u) \, dx, \right\},
\end{equation}
where  $ H^{m}_{\cG(K)} (\R^N)$ stands for the $\cO(K) \times \mathrm{id}_{N-K}$-invariant functions of $H^{m}(\R^N)$. The {\em energy functional} 
\begin{equation}\label{def:J}
J(u) = \frac12 \int_{\R^N} |\nabla^{m} u|^2 + \frac{\mu}{|y|^{2m}} u^2 \, dx - \int_{\R^N} G(u) \, dx
\end{equation}
is coercive and bounded from below on $\cM\cap\D$, where
$$
\cD := \left\{ u \in L^2(\R^N)\ : \ \int_{\R^N} |u|^2 \, dx \leq \rho \right\}.
$$
Our principal aim is to show that $\inf_{\cD \cap \cM} J$ is attained by a solution to \eqref{eq}. The role of the symmetry is twofold: it is required in the proof that every critical point of $J$ satisfies the Poho\v{z}aev identity, hence lies in $\cM$ (Section \ref{sec:Poho}), and it is essential for the existence of minimizers of $J$ (cf. Theorem \ref{thm:nonexistence}). We underline that the Poho\v{z}aev identity, on which our existence results Theorems \ref{th:main} and \ref{T:Vec} heavily rely, seems to be new in the cylindrical case $N>K$ and in the polyharmonic case with $m\ge3$, and therefore we consider it among the main outcomes of this paper, see Proposition \ref{poh:local} and Remark \ref{R:Smets} below. It is worth mentioning that  we cannot apply the classical Pucci--Serrin argument here \cite{PucciSerrin} due to the lack of global regularity of solutions and the presence of the singular potential. See also \cite{Gazzola}, where the Poho\v{z}aev identity has been obtained for weak solutions only in a particular case, e.g., on bounded domains with a power-type nonlinearity.

Finally, we point out that Theorem \ref{th:main} with $m=1$ improves \cite[Theorem 1.1]{BM} also in the autonomous case $\mu=0$: not only do we relax the smallness assumption about $\rho$ when $g$ has a mass critical growth at the origin (cf. \eqref{eq:Hstrict} and \cite[formula (1.5)]{BM}), but we can also deal with the case $N=2$, which will often require tailored techniques.

The paper is organized as follows. In Section \ref{sec:setting}, we build the functional setting and we prove the Poho\v{z}aev identity involving the $\cG(K)$-symmetry. The general growth assumptions of $g$ and the main results concerning \eqref{eq} are presented in Section \ref{sec:main} and their proofs are contained in Section \ref{sec:proofs}.
Finally, in Section \ref{sec:curlcurl}, we solve the curl-curl problem \eqref{e-normcurl} by means of \eqref{eq}.

\section{Functional setting}\label{sec:setting}

We work in the usual Sobolev space $H^m (\R^N)$ endowed with the norm, equivalent to the standard one,
$$
\| u\|_{H^m}^2 := |\nabla^m u|_2^2 + |u|_2^2,
$$
where $|\cdot|_k$ stands for the usual $L^k$-norm and $\nabla^m$ is given by \eqref{def:nabla-m}. Let us recall the critical Sobolev exponent $2^* := \frac{2N}{N-2m}$ for $N > 2m$ and $2^* := +\infty$ if $N=2m$. Then the following embeddings
$$
\begin{array}{ll}
H^{m} (\R^N) \subset L^t (\R^N), & \quad t \in [2,2^*)\\
H^{m} (\R^N) \subset L^{2^*}(\R^N), & \quad N > 2m
\end{array}
$$
are continuous and, concerning the case $N = 2m$, we have the following Moser--Trudinger-like inequality (see \cite[Theorem 1.1]{Ruf}, \cite[Theorem 1.3]{LamLu}):
\begin{equation}\label{eq:MT}
\sup \left\{\int_{\R^N} \left(\exp\left(\frac{N(2\pi)^N}{\omega_{N-1}} u^2\right) - 1\right) \, dx : \|u\|_{H^{N/2}} \le 1\right\} < +\infty.
\end{equation}

As a consequence, the following property holds true (cf. \cite[Corollary 2.4]{dPS}).
\begin{Lem}\label{lem:N=2s}
Let $\sigma\ge 2$, $M>0$, and $\alpha>0$ such that $\alpha M^2 < \frac{N(2\pi)^N}{\omega_{N-1}}$. Then there exists $C>0$ such that for every $t\in \left(1,N(2\pi)^N/(\omega_{N-1} \alpha M^2)\right]$ and $u\in H^{ N/2}(\R^N)$ with $\|u\|_{H^{N/2}}\le M$, 
\[
\int_{\R^N} |u|^\sigma \left(e^{\alpha u^2}-1\right)\,dx\le C|u|_\frac{\sigma t}{t-1}^\sigma.
\]
\end{Lem}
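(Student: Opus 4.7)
The plan is to combine H\"older's inequality with a scaling reduction to bring the problem into the range of the Moser--Trudinger inequality \eqref{eq:MT}. Fix $t\in(1,t_{\max}]$ with $t_{\max}:=N(2\pi)^N/(\omega_{N-1}\alpha M^2)$; note $t_{\max}>1$ thanks to the strict assumption $\alpha M^2<N(2\pi)^N/\omega_{N-1}$. By H\"older's inequality with conjugate exponents $t$ and $t/(t-1)$,
\[
\int_{\R^N}|u|^\sigma(e^{\alpha u^2}-1)\,dx\le\Bigl(\int_{\R^N}(e^{\alpha u^2}-1)^t\,dx\Bigr)^{1/t}|u|_{\sigma t/(t-1)}^\sigma,
\]
so everything reduces to a uniform bound on the first factor.

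The first key step is the elementary inequality $(x-1)^t\le x^t-1$ valid for all $x\ge1$ and $t\ge1$, which follows by checking that $h(x):=x^t-1-(x-1)^t$ vanishes at $x=1$ and has non-negative derivative on $[1,\infty)$. Applied to $x=e^{\alpha u^2}\ge 1$, it yields $(e^{\alpha u^2}-1)^t\le e^{t\alpha u^2}-1$. The second key step is a normalization: assuming $u\not\equiv 0$ (the conclusion is trivial otherwise), set $v:=u/\|u\|_{H^{N/2}}$, so that $\|v\|_{H^{N/2}}=1$ and $u^2\le M^2 v^2$ pointwise. The constraint on $t$ then gives $t\alpha u^2\le(N(2\pi)^N/\omega_{N-1})v^2$, and monotonicity of the exponential implies
\[
\int_{\R^N}(e^{t\alpha u^2}-1)\,dx\le\int_{\R^N}\Bigl(\exp\Bigl(\tfrac{N(2\pi)^N}{\omega_{N-1}}v^2\Bigr)-1\Bigr)\,dx\le C_0,
\]
where $C_0$ denotes the finite supremum in \eqref{eq:MT}.

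Combining the three estimates yields the claim with the uniform constant $C:=\max\{C_0,1\}^{1/1}$, since $C_0^{1/t}\le\max\{C_0,1\}$ for every $t>1$; this bound is independent of $u$ and of $t\in(1,t_{\max}]$, as required. I do not anticipate any substantial obstacle in executing this plan: the only mild subtlety is ensuring the $t$-dependent exponent $1/t$ in $C_0^{1/t}$ produces a genuinely uniform constant, which is handled by the trivial estimate just mentioned. The argument mirrors the scheme used in \cite[Corollary 2.4]{dPS}, adapted here to arbitrary even dimension $N=2m$ via the version of the Moser--Trudinger inequality recorded in \eqref{eq:MT}.
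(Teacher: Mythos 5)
Your proof is correct and follows essentially the same route as the paper's: H\"older's inequality with exponents $t$ and $t/(t-1)$, the elementary inequality $(e^s-1)^t\le e^{ts}-1$, and then normalizing $v=u/\|u\|_{H^{N/2}}$ so that the constraint $t\alpha M^2\le N(2\pi)^N/\omega_{N-1}$ puts the remaining integral within the scope of the Moser--Trudinger bound \eqref{eq:MT}. Your extra remark on making the constant uniform in $t$ is a harmless refinement of the paper's ``$\lesssim 1$''.
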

\begin{proof}
First observe that, if $s\ge0$ and $\beta\ge1$,
\begin{equation}\label{ineqe1}
(e^s-1)^\beta\leq e^{s\beta}-1.
\end{equation}
Let $u\in H^{N/2}(\R^N)$. From H\"older's inequality and \eqref{ineqe1} we have that, for every $t>1$,
\begin{align*}
\int_{\R^N} |u|^\sigma \left(\exp(\alpha u^2)-1\right)\,dx
& \le
|u|_{\frac{\sigma t}{t-1}}^\sigma
\left(\int_{\R^N} \left(\exp(\alpha u^2)-1\right)^t\,dx\right)^{1/t}\\
& \le
|u|_{\frac{\sigma t}{t-1}}^\sigma
\left(\int_{\R^N} \left(\exp(\alpha t u^2)-1\right)\,dx\right)^{1/t}.
\end{align*}	
Moreover, if $t\in \left(1, N(2\pi)^N/(\omega_{N-1} \alpha M^2)\right]$ and $\|u\|_{H^{ N/2}}\le M$, letting $v := u / \|u\|_{H^{ N/2}}$, by \eqref{eq:MT}
\begin{align*}
&\quad \int_{\R^N} \left(\exp(\alpha t u^2)-1\right)\,dx
= \int_{\R^N} \left(\exp\left(\alpha t\|u\|^2 v^2\right) - 1\right)\,dx\\
& \le
\int_{\R^N} \left(\exp\left(\alpha t M^2 v^2\right)-1\right)\,dx
\le
\int_{\R^N} \left(\exp\left( \frac{N(2\pi)^N}{\omega_{N-1}} v^2\right)-1\right)\,dx
\lesssim 1
\end{align*}
and we conclude.
\end{proof}

Before proceeding, we need the following higher-order version of Lions' lemma (cf. \cite{Lions} in the case $m=1$).

\begin{Lem}\label{lem:LIONS}
If $(u_n) \subset H^m(\rn)$ is bounded and there exists $r>0$ such that
\[
\lim_{n \to +\infty} \sup_{x \in \rn} \int_{B(x,r)} u_n^2 \, dx = 0,
\]
then
\[
\int_{\R^N} | \Psi(u_n)| \, dx \to 0 \ as \ n \to +\infty
\]
for every continuous function $\Psi \colon \R \rightarrow \R$ satisfying
\begin{equation}\label{eq:L1}
\lim_{t \to 0} \frac{\Psi(t)}{t^2} = 0
\end{equation}
and
\begin{equation}\label{eq:L2}
\left\{ \begin{array}{ll}
\displaystyle\lim_{|t| \to +\infty} \frac{\Psi(t)}{t^{2^*}} = 0 &\quad \text{if } N > 2m,\\
\displaystyle\lim_{|t| \to +\infty} \frac{\Psi(t)}{e^{\alpha t^2}} = 0 \text{ for all } \alpha > 0 &\quad \text{if } N = 2m.
\end{array} \right.
\end{equation}
\end{Lem}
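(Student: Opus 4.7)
The strategy is a higher-order analogue of the classical Lions vanishing argument, followed by a three-region decomposition of $|\Psi(u_n)|$ dictated by \eqref{eq:L1} and \eqref{eq:L2}.

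\textbf{Step 1 (vanishing $\Rightarrow$ $L^q$-decay).} I would first prove that the hypothesis forces $u_n\to 0$ in $L^q(\R^N)$ for every $q\in(2,2^*)$ (with $2^*=+\infty$ when $N=2m$). Cover $\R^N$ by balls $\{B(x_i,r)\}_i$ with uniformly bounded overlap. On each ball, combine the translation-invariant Sobolev embedding $H^m(B(x_i,r))\hookrightarrow L^s(B(x_i,r))$---valid for every $s\in[2,2^*)$ if $N>2m$ and every $s\in[2,+\infty)$ if $N=2m$---with the standard interpolation
\[
\|u\|_{L^q(B_i)}\le\|u\|_{L^2(B_i)}^{1-\theta}\|u\|_{L^s(B_i)}^{\theta},\qquad \frac{1}{q}=\frac{1-\theta}{2}+\frac{\theta}{s}.
\]
Pick $s$ sufficiently close to $2^*$ (or sufficiently large, when $N=2m$) so that $q\theta\ge 2$. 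Raising to the $q$-th power, using the elementary inequality $\sum_i a_i^p\le(\sum_i a_i)^p$ for $p\ge 1$, $a_i\ge 0$, together with the bounded overlap, I obtain
\[
|u_n|_q^q\le C\Bigl(\sup_{x\in\R^N}\|u_n\|_{L^2(B(x,r))}\Bigr)^{q(1-\theta)}\|u_n\|_{H^m}^{q\theta}\longrightarrow 0.
\]
This handles a cofinal range of $q$; the remaining exponents (namely $q\in(2,2_*)$ when $N>2m$ and $q\in(2,4)$ when $N=2m$) follow by interpolating $L^q$ between $L^2$ (bounded) and the already-treated exponent (convergent to $0$).

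\textbf{Step 2 (three-region decomposition).} Fix $\eps>0$. By \eqref{eq:L1} pick $\delta\in(0,1)$ with $|\Psi(t)|\le\eps t^2$ for $|t|\le\delta$. By \eqref{eq:L2} pick $M>\delta$ such that $|\Psi(t)|\le\eps|t|^{2^*}$ for $|t|\ge M$ when $N>2m$, or $|\Psi(t)|\le\eps e^{\alpha t^2}$ for $|t|\ge M$ when $N=2m$, choosing $\alpha>0$ so small that $\alpha\bigl(\sup_n\|u_n\|_{H^{N/2}}\bigr)^2<N(2\pi)^N/\omega_{N-1}$ (possible since the sequence is bounded in $H^m$). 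Decompose
\[
\int_{\R^N}|\Psi(u_n)|\,dx=I_1(n)+I_2(n)+I_3(n)
\]
with $I_1,I_2,I_3$ the integrals over $\{|u_n|\le\delta\}$, $\{\delta<|u_n|<M\}$, $\{|u_n|\ge M\}$ respectively. Immediately $I_1(n)\le\eps|u_n|_2^2\le C\eps$. For $I_2$, use continuity of $\Psi$ to bound $|\Psi|$ by some $C_M$ on $[-M,M]$ and Chebyshev $\mathbf{1}_{\{|u_n|\ge\delta\}}\le(|u_n|/\delta)^q$ with a $q\in(2,2^*)$ provided by Step 1, so that $I_2(n)\le C_M\delta^{-q}|u_n|_q^q\to 0$. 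For $I_3$: when $N>2m$, $I_3(n)\le\eps|u_n|_{2^*}^{2^*}\le C\eps$; when $N=2m$, insert a factor $(|u_n|/M)^\sigma\ge 1$ for a convenient $\sigma\ge 2$ and apply Lemma \ref{lem:N=2s} together with the embeddings $H^{N/2}\hookrightarrow L^p$ for every $p\in[2,+\infty)$ to obtain $I_3(n)\le C\eps$.

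Combining these three bounds gives $\limsup_n\int|\Psi(u_n)|\,dx\le C\eps$, and letting $\eps\to 0^+$ concludes the proof. The principal obstacle is the critical-dimension case $N=2m$: the natural endpoint $L^{2^*}=L^{+\infty}$ is unavailable, so Step 1 must be executed with a large but finite $L^s$-embedding, and the super-polynomial tail in $I_3$ requires the Moser--Trudinger-type Lemma \ref{lem:N=2s}, whose applicability depends delicately on coupling the choice of $\alpha$ to the uniform $H^{N/2}$-bound on $(u_n)$.
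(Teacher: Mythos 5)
Your proposal is correct and follows essentially the same route as the paper: a covering of $\R^N$ by balls with bounded overlap combined with local interpolation between $L^2$ and the Sobolev embedding to show $|u_n|_q\to 0$ for a subcritical exponent, and then a splitting of $|\Psi(u_n)|$ via \eqref{eq:L1}--\eqref{eq:L2} (your three-region split is the pointwise version of the paper's $\eps$--$c_\eps$ inequality), with Lemma \ref{lem:N=2s} and the uniform $H^{N/2}$-bound handling the exponential tail when $N=2m$. The only cosmetic differences are that the paper fixes the single exponent $p=2_*$ (respectively $|u_n|_3$ when $N=2m$) so the local $H^m$-power is exactly $2$, whereas you treat a general $q$ and absorb the exponential region into a $C\eps$ bound rather than a vanishing norm; both are sound.
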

\begin{proof}
Let us assume first that $N>2m$ and fix $p \in (2,2^*)$. For every $\eps>0$ there exists $c_\eps>0$ such that
\[
\int_{\rn} |\Psi(u_n)| \, dx \le \eps(|u_n|_2^2 + |u_n|_{2^*}^{2^*}) + c_\eps |u_n|_p^p,
\]
therefore it suffices to prove that $\lim_{n \to +\infty} |u_n|_p = 0$ for a particular choice of $p$. For every $x \in \rn$ there holds
\[
|u_n|_{L^p(B(x,r))} \le |u_n|_{L^2(B(x,r))}^{1-\lambda} |u_n|_{L^{2^*}(B(x,r))}^\lambda \le |u_n|_{L^2(B(x,r))}^{1-\lambda} C \|u_n\|_{H^m(B(x,r))}^\lambda,
\]
where $\lambda = (\frac12 - \frac1p) N/m$ and $C>0$ does not depend on $x$. Choosing $p = 2_*$ we obtain
\[
|u_n|_{L^p(B(x,r))}^p \lesssim |u_n|_{L^2(B(x,r))}^{4m/N} \|u_n\|_{H^m(B(x,r))}^2.
\]
Covering $\rn$ with balls of radius $r$ such that each point is contained in at most $N+1$ of them we get
\[
|u_n|_p^p \lesssim \sup_k \|u_k\|_{H^m}^2 \sup_{x \in \rn} \left(\int_{B(x,r)} u_n^2 \, dx\right)^{2m/N} \to 0 \quad \text{ as } n \to +\infty.
\]
Now let us assume $N=2m$ and fix $\sigma\ge2$, $\alpha>0$, and $t>1$ such that $\alpha t \sup_n\|u_n\|_{H^m}^2 < N(2\pi)^N/\omega_{N-1}$. For every $\eps>0$ there exists $c_\eps>0$ such that
\[
\int_{\rn} |\Psi(u_n)| \, dx \le \eps |u_n|_2^2 + c_\eps \int_{\rn} |u_n|^\sigma \left(e^{\alpha u_n^2} - 1\right) \, dx,
\]
thus, utilizing Lemma \ref{lem:N=2s}, it suffices to prove that $\lim_{n \to +\infty} |u_n|_{\sigma t/(t-1)} = 0$ for a particular choice of $\sigma$ and $t$. In particular, taking $\sigma = 2$ and $t=3$, we can argue as before obtaining
\[
|u_n|_{L^3(B(x,r))}^3 \lesssim |u_n|_{L^2(B(x,r))} \|u_n\|_{H^m(B(x,r))}^2
\]
and conclude as in the previous case.
\end{proof}

Now let $\cO(K)$ denote the group of orthogonal $(K \times K)$-matrices acting on $\mathbb{R}^K$. For $2m \leq K \leq N$, by $\cG(K) := \cO(K) \times \mathrm{id}_{N-K}$ we denote the group acting on the first $K$ components in $\mathbb{R}^N = \mathbb{R}^{K} \times \mathbb{R}^{N-K}$. It is clear that this action is isometric. Recall that we introduced the space $H^{m}_{\cG(K)} (\R^N)$ of $\cG(K)$-invariant functions $u \in H^{m} (\R^N)$. We then have the following $\cG(K)$-symmetric version of Lemma \ref{lem:LIONS}, the proof being omitted because similar to the one for \cite[Corollary 3.2]{NonradMed} (see also \cite[Remark 3.3]{NonradMed}) when $N>K$ and a consequence of the compact embedding $H^{m}_{\cG(N)} (\R^N) \hookrightarrow L^p(\rn)$, $2<p<2^*$, when $N=K$.

\begin{Lem}\label{lem:Lions}
Suppose that $(u_n) \subset H^{m}_{\cG(K)} (\R^N)$ is bounded and for all $R > 0$ the following condition holds true
$$
\lim_{n \to +\infty} \sup_{z \in \R^{N-K}} \int_{B((0,z), R)} |u_n|^2 \, dx = 0.
$$
Then
$$
\int_{\R^N} | \Psi(u_n)| \, dx \to 0 \ as \ n \to +\infty
$$
for every continuous function $\Psi \colon \R \rightarrow \R$ satisfying \eqref{eq:L1}--\eqref{eq:L2}.
\end{Lem}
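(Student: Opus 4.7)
The strategy is to reduce to the non-symmetric Lions-type statement, Lemma \ref{lem:LIONS}. More precisely, I would show that the weaker hypothesis---which only involves balls centred on the axis $\{0\}\times\R^{N-K}$---combined with the $\cG(K)$-invariance of the $u_n$'s actually yields the standard vanishing $\sup_{x\in\R^N}\int_{B(x,R)}|u_n|^2\,dx\to 0$ as $n\to+\infty$, for every $R>0$. Once this is established, the conclusion on $\int_{\R^N}|\Psi(u_n)|\,dx$ follows immediately from Lemma \ref{lem:LIONS}, since the hypotheses \eqref{eq:L1}--\eqref{eq:L2} on $\Psi$ coincide.

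The core of the argument, in the cylindrical case $N>K$, is a multiplicity estimate coming from the symmetry. Writing $x_0=(y_0,z_0)\in\R^K\times\R^{N-K}$, the $\cG(K)$-invariance of $u_n$ implies that $\int_{B(g x_0,R)}|u_n|^2\,dx$ is the same for every $g\in\cG(K)$. Since $\cO(K)$ acts transitively on the sphere of radius $|y_0|$ in $\R^K$, and $K\geq 2m\geq 2$, for $|y_0|$ large one can place approximately $c(R)|y_0|^{K-1}$ pairwise-disjoint rotates of $B(x_0,R)$ around the orbit of $x_0$ (only the $y$-projections need to be disjoint, since $z_0$ is fixed). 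Summing the equal contributions and comparing with $|u_n|_2^2$ produces an estimate of the form $\int_{B(x_0,R)}|u_n|^2\,dx\leq C R^{K-1}|y_0|^{-(K-1)}|u_n|_2^2$, uniform in $n$ thanks to the $L^2$-boundedness of $(u_n)$. Given $\eps>0$, the right-hand side is smaller than $\eps$ whenever $|y_0|\geq R_0(\eps)$. For the complementary regime $|y_0|\leq R_0$ one exploits the inclusion $B(x_0,R)\subset B((0,z_0),R+R_0)$ together with the standing hypothesis applied at radius $R+R_0$. Combining the two bounds yields the desired global vanishing.

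In the non-cylindrical case $N=K$ one can appeal directly to the compact embedding $H^m_{\cG(N)}(\R^N)\hookrightarrow L^p(\R^N)$ for $p\in(2,2^*)$, which, together with the growth conditions \eqref{eq:L1}--\eqref{eq:L2} (and Lemma \ref{lem:N=2s} when $N=2m$), concludes the argument via the same three-regime splitting of $|\Psi(u_n)|$ (small, moderate, and large values of $u_n$) that underlies the proof of Lemma \ref{lem:LIONS}. The main technical obstacle, in my view, lies in the $N>K$ case: making the packing of disjoint rotates rigorous with a constant $c(R)$ that does not deteriorate as $z_0$ varies, which is precisely where the condition $K\geq 2$ and the free action of $\cO(K)$ on positive-radius spheres in $\R^K$ are decisive.
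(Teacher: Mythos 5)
Your proposal is correct and follows essentially the same route as the paper, which omits the proof and refers to \cite[Corollary 3.2]{NonradMed}: the orbit-packing estimate for $|y_0|$ large plus the hypothesis for $|y_0|$ bounded reduces the statement to Lemma \ref{lem:LIONS}, and the case $N=K$ is handled by the compact embedding $H^m_{\cG(N)}(\R^N)\hookrightarrow L^p(\R^N)$. Just make explicit, in the $N=K$ case, that the vanishing hypothesis on balls $B(0,R)$ forces the weak limit to be $0$, so that the compact embedding yields $u_n\to 0$ in $L^p$ before you perform the splitting of $|\Psi(u_n)|$.
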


Now we introduce the space
$$
X^m := \left\{ u \in H^{m} (\R^N) \ : \ |\mu|\int_{\R^N} \frac{u^2}{|y|^{2m}} \, dx < +\infty \right\}
$$
and
$$
X_{\cG(K)}^m := X^m \cap H^{m}_{\cG(K)} (\R^N) = \left\{ u \in H^{m}_{\cG(K)} (\R^N) \ : \ |\mu|\int_{\R^N} \frac{u^2}{|y|^{2m}} \, dx < +\infty \right\}.
$$
In particular, when $\mu = 0$, $X^m = H^{m}(\R^N)$.

Let $\cD^{m,2}(\R^N)$ be the completion of $\cC_0^\infty(\R^N)$ with respect to the norm
$
u \mapsto |\nabla^m u|_2.
$
Then we have the following lemma (cf. \cite[Theorem 2.5]{Herbst}, see also \cite[Subsection 1.1]{MY}).
\begin{Lem}\label{lem:Hardy}
If $K>2m$, then for every $u \in \cD^{m,2}(\R^N)$
\[
\int_{\R^N} \frac{u^2}{|y|^{2m}} \, dx  \leq \left( \frac{\Gamma\left(\frac{K-2m}{4}\right)}{2^m\Gamma\left(\frac{K+2m}{4}\right)} \right)^2 \int_{\R^N} |\nabla^m u|^2 \, dx,
\]
where $\Gamma$ is the Gamma function. Consequently, $X_{\cG(K)}^m = H^m_{\cG(K)}(\R^N)$.
\end{Lem}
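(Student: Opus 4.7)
The plan is to reduce the cylindrical estimate to the sharp Hardy inequality of Herbst type on $\R^K$ applied slicewise in the transverse variable $z$, and then to bound the partial $m$-derivative in $y$ by the full $m$-derivative via Plancherel. Fix $u\in\cC_0^\infty(\R^N)$. For each $z\in\R^{N-K}$ the slice $u(\cdot,z)$ belongs to $\cC_0^\infty(\R^K)\subset\cD^{m,2}(\R^K)$, so the classical sharp Hardy inequality on $\R^K$ (valid because $K>2m$; see \cite[Theorem 2.5]{Herbst} and \cite[Subsection 1.1]{MY}) gives
\[
\int_{\R^K}\frac{u(y,z)^2}{|y|^{2m}}\,dy\le C_{K,m}^2\int_{\R^K}\bigl|\nabla_y^m u(y,z)\bigr|^2\,dy,
\]
where $C_{K,m}:=\Gamma((K-2m)/4)/\bigl(2^m\Gamma((K+2m)/4)\bigr)$ and $\nabla_y^m$ is defined as in \eqref{def:nabla-m} but with $\Delta$ and $\nabla$ replaced by their $y$-partial analogues $\Delta_y$ and $\nabla_y$. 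Integrating in $z\in\R^{N-K}$ and invoking Fubini yields
\[
\int_{\R^N}\frac{u^2}{|y|^{2m}}\,dx\le C_{K,m}^2\int_{\R^N}|\nabla_y^m u|^2\,dx.
\]

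Next I would dominate $|\nabla_y^m u|_2$ by $|\nabla^m u|_2$. Writing $\xi=(\eta,\zeta)\in\R^K\times\R^{N-K}$, one computes directly from \eqref{def:nabla-m} that $|\widehat{\nabla^m u}(\xi)|^2=(2\pi)^{2m}|\xi|^{2m}|\widehat u(\xi)|^2$ and $|\widehat{\nabla_y^m u}(\xi)|^2=(2\pi)^{2m}|\eta|^{2m}|\widehat u(\xi)|^2$, so Plancherel together with $|\eta|\le|\xi|$ gives $|\nabla_y^m u|_2\le|\nabla^m u|_2$ and hence the desired inequality for $u\in\cC_0^\infty(\R^N)$. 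By density of $\cC_0^\infty(\R^N)$ in $\cD^{m,2}(\R^N)$ and Fatou's lemma (applied along an a.e.\ convergent subsequence, whose existence follows from Sobolev convergence in $L^{2^*}(\R^N)$), the estimate extends to every $u\in\cD^{m,2}(\R^N)$.

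For the consequence, since $N\ge K>2m$ the embedding $H^m(\R^N)\hookrightarrow\cD^{m,2}(\R^N)$ holds (any $H^m$-function is approximated in the $|\nabla^m\cdot|_2$-seminorm by $\cC_0^\infty$ mollifications), so the inequality applied to $u\in H^{m}_{\cG(K)}(\R^N)$ yields $|\mu|\int_{\R^N} u^2/|y|^{2m}\,dx<+\infty$, proving $u\in X^m_{\cG(K)}$; the reverse inclusion is immediate. The only slightly delicate step is the density/Fatou extension, but it presents no serious obstacle because $\cC_0^\infty$ is dense in $\cD^{m,2}$ by definition and the weighted $L^2$-integral is lower semicontinuous under a.e.\ convergence.
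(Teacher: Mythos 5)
Your proof is correct, and it is essentially the argument the paper intends: the paper states this lemma without proof, merely citing Herbst's sharp inequality (and Mizutani--Yao), and your slicewise application of Herbst on $\R^K$, the Plancherel comparison $|\eta|\le|\xi|$ giving $|\nabla_y^m u|_2\le|\nabla^m u|_2$, and the density/Fatou extension supply exactly the routine details being left to the reader, with the constant correctly depending on $K$ rather than $N$. The only implicit prerequisite, $N\ge K>2m$ so that $\cD^{m,2}(\R^N)\hookrightarrow L^{2^*}(\R^N)$ and $H^m(\R^N)\subset\cD^{m,2}(\R^N)$, is satisfied and you use it where needed.
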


The foregoing inequality suggests the following assumption about the real parameter $\mu$:
	\begin{equation}\label{M}
\mu > - \left( \frac{2^m\Gamma\left(\frac{K+2m}{4}\right)}{\Gamma\left(\frac{K-2m}{4}\right)} \right)^2 \mbox{ if } K > 2m \quad \mbox{ or } \quad \mu \ge 0 \mbox{ if } K = 2m.
\end{equation}
Then we may introduce the following norm
$$
\|u\|_\mu^2 := \int_{\R^N} |\nabla^{m} u|^2 \, dx + \int_{\R^N} |u|^2 \, dx + \mu\int_{\R^N} \frac{u^2}{|y|^{2m}} \, dx
$$
on $X^m_{\cG(K)}$.

\subsection{The Poho\v{z}aev identity}\label{sec:Poho}

Since we are working with a singular potential, we need an appropriate Poho\v{z}aev identity. In what follows, $\mathbb{B}^d (R)$ denotes the $d$-dimensional ball centred at $0$ of radius $R > 0$. We also write $\mathbb{S}^{d-1}(R) := \partial \mathbb{B}^d(R)$. Moreover, $\cH^d$ denotes the $d$-dimensional Hausdorff measure.

\begin{Prop}\label{poh:local}
Let $\widetilde g \colon \R \rightarrow \R$ be a continuous function satisfying
\begin{empheq}[]{align}
\label{eq:local1} &|\widetilde g(s)| \lesssim |s| + |s|^{2^*-1} && \hbox{if }N > 2m, \\
\label{eq:local2} &\hbox{for all }q \ge 2\hbox{ and }\alpha>\frac{N(2\pi)^N}{\omega_{N-1}},\hbox{ it holds }\displaystyle |\widetilde g(s)| \lesssim |s| + |s|^{q-1}(e^{\alpha s^2}-1) && \hbox{if }N = 2m.
\end{empheq}
Let $u \in X^m_{\cG(K)}$ be a weak solution to 
$$
(-\Delta)^m u + \frac{\mu}{|y|^{2m}} u = \widetilde g(u).
$$
Then
\begin{equation}\label{e-PohSing}
\int_{\R^N} (N-2m) \left(|\nabla^m u|^2 + \frac{\mu}{|y|^{2m}}u^2\right) - 2N \widetilde G(u) \, dx = 0,
\end{equation}
where $\widetilde G(u) := \int_0^u \widetilde g(t) \, dt$.
\end{Prop}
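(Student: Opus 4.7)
The plan is to make rigorous the formal Pohožaev computation obtained by pairing the equation with the dilation field $x\cdot\nabla u$. Formally,
\begin{align*}
\int_{\R^N}(-\Delta)^m u\,(x\cdot\nabla u)\,dx &= \tfrac{2m-N}{2}\int_{\R^N}|\nabla^m u|^2\,dx,\\
\int_{\R^N}\tfrac{\mu u}{|y|^{2m}}(x\cdot\nabla u)\,dx &= \tfrac{\mu(2m-N)}{2}\int_{\R^N}\tfrac{u^2}{|y|^{2m}}\,dx,\\
\int_{\R^N}g(u)(x\cdot\nabla u)\,dx &= -N\int_{\R^N}G(u)\,dx,
\end{align*}
the middle line following from the splitting $x\cdot\nabla u = y\cdot\nabla_y u + z\cdot\nabla_z u$, the identity $\mathrm{div}_y(y|y|^{-2m}) = (K-2m)|y|^{-2m}$, and integration by parts in $y$ and $z$ separately. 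Summing yields \eqref{e-PohSing}; the obstacle is that $x\cdot\nabla u$ need not belong to $X^m_{\cG(K)}$.

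First I would upgrade regularity. On $\R^N \setminus \Sigma$, where $\Sigma := \{x=(y,z) : y=0\}$, the coefficient $\mu/|y|^{2m}$ is smooth, so a Sobolev bootstrap based on the growth bounds \eqref{eq:local1}--\eqref{eq:local2} (using \eqref{eq:MT} and Lemma \ref{lem:N=2s} when $N=2m$) gives $u\in C^{2m}(\R^N\setminus\Sigma)$. I would then introduce two $\cG(K)$-invariant cutoffs: $\chi_R \in C_c^\infty(\R^N)$ equal to $1$ on $B(0,R)$, supported in $B(0,2R)$, with $|\nabla^k \chi_R| \lesssim R^{-k}$, and $\eta_\epsilon \in C^\infty(\R^N)$ depending only on $|y|$, vanishing on $\{|y|<\epsilon\}$ and equal to $1$ on $\{|y|>2\epsilon\}$, with $|\nabla^k \eta_\epsilon|\lesssim \epsilon^{-k}$. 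Since $u$ is $\cG(K)$-invariant so is $x\cdot\nabla u$, hence $\varphi_{R,\epsilon} := \chi_R\eta_\epsilon(x\cdot\nabla u)$ lies in $X^m_{\cG(K)}$ by the previous step and is admissible in the weak formulation. A Leibniz expansion then produces $\chi_R\eta_\epsilon$ times the three leading terms above, plus remainders carrying at least one derivative of $\chi_R$ (supported in $\{R<|x|<2R\}$) or of $\eta_\epsilon$ (supported in the collar $\{\epsilon<|y|<2\epsilon\}$).

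For the outer limit $R \to \infty$, the integrability of $|\nabla^m u|^2$, $u^2/|y|^{2m}$ and $G(u)$ in $L^1(\R^N)$ (the last by the growth of $g$ together with the Sobolev/Moser--Trudinger embeddings) gives a subsequence $R_n \to \infty$ along which the far-field remainders vanish. For $\epsilon \to 0$, every remainder involving $k$ derivatives of $\eta_\epsilon$ is pointwise majorized by $|y|^{-k}\mathbf{1}_{\{\epsilon<|y|<2\epsilon\}}$ times derivatives of $u$; Cauchy--Schwarz bounds it by sums of
\[
\int_{\{\epsilon<|y|<2\epsilon\}}\frac{|\nabla^j u|^2}{|y|^{2(m-j)}}\,dx,\qquad 0\le j\le m,
\]
each globally finite by iterated Hardy--Rellich inequalities rooted in Lemma \ref{lem:Hardy} and the standing assumption \eqref{M}, and therefore tending to $0$ as $\epsilon \to 0$ by dominated convergence. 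This produces \eqref{e-PohSing}.

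The main obstacle is precisely this $\epsilon\to 0$ limit: the derivatives of $\eta_\epsilon$ scale like $\epsilon^{-k}$ against a $K$-dimensional collar of volume $\epsilon^K$, so the argument can only close at the critical codimension $K\ge 2m$, which is exactly the standing hypothesis. The $\cG(K)$-symmetry is essential twice over: it makes $x\cdot\nabla u$ an admissible (invariant) test function, and it is what renders Lemma \ref{lem:Hardy} applicable in the cylindrical case $N>K$.
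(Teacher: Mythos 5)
Your plan is the Smets-type strategy (a cut-off $\eta_\eps$ vanishing near the singular set $\{y=0\}$), which is exactly the route the paper rules out in Remark \ref{R:Smets}, and the step where your argument breaks is the claimed majorization of the collar remainders. When you expand $\nabla^m\bigl(\chi_R\eta_\eps\,(x\cdot\nabla u)\bigr)$, the terms carrying derivatives of $\eta_\eps$ are \emph{not} bounded by $|y|^{-k}\mathbf{1}_{\{\eps<|y|<2\eps\}}$ times derivatives of $u$: the dilation field contributes scalar factors such as $x\cdot\nabla\Delta^k u$, which grow like $|x|$, and this growth cancels only in the special terms where $\nabla\eta_\eps$ is contracted with $x$ itself (there $x\cdot\nabla\eta_\eps=y\cdot\nabla_y\eta_\eps$ is bounded, because $\eta_\eps$ depends on $|y|$ only). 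Already for $m=1$ the dangerous term is $(x\cdot\nabla u)\,\nabla u\cdot\nabla\eta_\eps$, of size $\eps^{-1}|x|\,|\nabla u|^2$ on the collar $\{\eps<|y|<2\eps\}$; when $N>K$ this collar is unbounded in the $z$-directions, so $|x|$ is unbounded there, no Hardy--Rellich inequality in the variable $|y|$ absorbs an extra factor of $|x|$, and even your first limit $R\to\infty$ at fixed $\eps$ requires an integrability over the unbounded collar that you do not have. This is precisely the content of Remark \ref{R:Smets}: since $\eta_\eps$ cannot be taken radial in $x$ when $N>K$, one can only ensure $|x\cdot\nabla\eta_\eps|\lesssim1$, not $|x|\,|\nabla\eta_\eps|\lesssim1$ (nor the higher-order analogues needed when $m\ge2$). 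Your closing volume-counting heuristic ($\eps^{-k}$ against a collar of volume $\sim\eps^K$) does not address this, because the derivatives of $u$ are not bounded near the singular set and the obstruction is the growth of $|x|$ \emph{along} it, not the codimension.

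Two further points. The regularity you invoke is too strong: with $g$ merely continuous one cannot bootstrap to $C^{2m}$ off the singular set; what is available, and what the paper uses via \cite{Siemianowski}, is $u\in H^{2m}_{\mathrm{loc}}\bigl(\R^N\setminus(\{0\}\times\R^{N-K})\bigr)$, which is why the paper keeps the polyharmonic terms in pointwise divergence form (the identities for $\Delta^{2k+1}u\,(\nabla u\cdot x)$ and $\Delta^{2k}u\,(\nabla u\cdot x)$, new for $m\ge3$). Also, the $j=0$ weighted integral $\int u^2|y|^{-2m}\,dx$ is not given by Lemma \ref{lem:Hardy} when $K=2m$ (the constant degenerates); it is finite only because it is built into the definition of $X^m_{\cG(K)}$ when $\mu\neq0$. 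By contrast, the paper integrates over $C_\eps=\{|y|\ge\eps\}$ against a \emph{radial} far-field cutoff $\psi_n$ with $|x|\,|\nabla\psi_n|\lesssim1$, so that no cut-off derivative ever meets the singular region, and the only delicate contribution is the boundary term $\tfrac{\mu}{2}u^2\eps^{1-2m}$ on $\partial C_\eps$, which is shown to vanish along a suitable sequence $\eps_k\to0$ via the $\cG(K)$-invariant trace/coarea argument for $\vp(y)=|y|^{K-2m}\int_{\R^{N-K}}u^2\,dz$. Your smooth cutoff would handle that particular term by absolute continuity, but it has no mechanism for the higher-order collar terms described above, so the proposal does not prove the Poho\v{z}aev identity in the cylindrical case $N>K$ that the paper needs.
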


\begin{proof}
From \cite[Corollary 3.4]{Siemianowski} we know that $u \in H^{2m}_{\mathrm{loc}} \left( \R^N \setminus  ( \{0\} \times \R^{N-K} ) \right)$ if $N > 2m$. When $N = 2m$, we have the same result because $u \in L^p (\R^N)$ for all $p \in [2,+\infty)$ and we can argue in a similar way as \cite[Proofs of Theorems 1.4 and 1.1]{LamLu} to obtain $\widetilde g(u) \in L^p (\R^N)$.\\
For every $n \ge 1$ let $\psi_n \in \cC^1_0(\rn)$ radially symmetric such that $0 \le \psi_n \le 1$, $\psi_n(x) = 1$ for every $|x| \le n$, $\psi_n(x) = 0$ for every $|x| \ge 2n$, and $|x||\nabla \psi_n(x)| \lesssim 1$ for every $x \in \rn$.\\
Observing that, for every $i \in \N$,
\[
\Delta^i (\nabla u \cdot x) = 2i \Delta^i u + \nabla \Delta^i u \cdot x,
\]
the following identities hold true
\begin{align*}
\widetilde g(u) (\nabla u \cdot x) \psi_n  = & \; \div \bigl( \psi_n \widetilde G(u) x \bigr) - N \psi_n \widetilde G(u) - \widetilde G(u) \nabla \psi_n \cdot x,
\end{align*}
\begin{align*}
\frac{u}{|y|^{2m}} (\nabla u \cdot x) \psi_n  = & \; \frac12 \div \left( \frac{u^2}{|y|^{2m}} \psi_n x \right) -\frac12 \frac{u^2}{|y|^{2m}} \nabla \psi_n \cdot x - \frac{N-2m}{2} \frac{u^2}{|y|^{2m}} \psi_n,
\end{align*}
\begin{align*}
\Delta^{2k+1} u (\nabla u \cdot x) \psi_n = & \; \div \biggl[\biggl(\Delta^k(x \cdot \nabla u) \nabla \Delta^k u - \frac{|\nabla \Delta^k u|^2}{2} x - \sum_{j=0}^{k-1} \Delta^{2k-j} u \nabla \Delta^j (\nabla u \cdot x)\\
& + \sum_{j=0}^{k-1} \Delta^j(\nabla u \cdot x) \nabla \Delta^{2k-j}u\biggr) \psi_n\biggr] + \frac{N-4k-2}{2} |\nabla \Delta^k u|^2 \psi_n\\
& - \biggl(\Delta^k(\nabla u \cdot x) \nabla \Delta^k u - \frac{|\nabla \Delta^k u|^2}{2} x - \sum_{j=0}^{k-1} \Delta^{2k-j} u \nabla \Delta^j (\nabla u \cdot x)\\
& + \sum_{j=0}^{k-1} \Delta^j(\nabla u \cdot x) \nabla \Delta^{2k-j}u\biggr) \cdot \nabla \psi_n,
\end{align*}
\begin{align*}
\Delta^{2k}u (\nabla u \cdot x) \psi_n = & \; \div\biggl[\biggl(\frac12 (\Delta^ku)^2 x + (\nabla u \cdot x) \nabla \Delta^{2k-1}u + \sum_{j=0}^{k-2} \Delta^{j+1}(\nabla u \cdot x) \nabla \Delta^{2k-j-2}u\\
& - \sum_{j=0}^{k-1} \Delta^{2k-j-1}u \nabla \Delta^j(\nabla u \cdot x)\biggr) \psi_n\biggr] + \frac{4k-N}{2} (\Delta^ku)^2 \psi_n\\
& - \biggl(\frac12 (\Delta^ku)^2 x + (\nabla u \cdot x) \nabla \Delta^{2k-1}u + \sum_{j=0}^{k-2} \Delta^{j+1}(\nabla u \cdot x) \nabla \Delta^{2k-j-2}u\\
& - \sum_{j=0}^{k-1} \Delta^{2k-j-1}u \nabla \Delta^j(\nabla u \cdot x)\biggr) \cdot \nabla \psi_n.
\end{align*}
For every $\eps > 0$ define
\[
C_\eps := \left\{x \in \rn : |y| \ge \eps\right\}.
\]
Multiplying both sides of $(-\Delta)^m u + \frac{\mu}{|y|^{2m}} u = \widetilde g(u)$ by $\psi_n \nabla u \cdot x$, using the identities above, and integrating over $C_\eps$, we obtain
$$
0 = \int_{C_\eps} \left( -(-\Delta)^m u - \frac{\mu}{|y|^{2m}} u + \widetilde g(u) \right) \psi_n \nabla u \cdot x \, dx = A_{n,\eps} + B_{n,\eps},
$$
where
\begin{align*}
A_{n,\eps} &:= \int_{C_\eps} \frac12 |\nabla^m u|^2 \nabla \psi_n \cdot x + \cX \cdot \nabla \psi_n + \frac{N-2m}{2} \psi_n |\nabla^m u|^2 - N \psi_n \widetilde G(u) \\
&\quad - \widetilde G(u) \nabla \psi_n \cdot x + \mu \left( \frac12 \frac{u^2}{|y|^{2m}} \nabla \psi_n \cdot x + \frac{N-2m}{2} \frac{u^2}{|y|^{2m}} \psi_n \right) \, dx, 
\end{align*}
\begin{align*}
B_{n,\eps} &:= \int_{C_\eps} \div \left[ \psi_n \left( - \cX - \frac12 |\nabla^m u|^2 x + \widetilde G(u)x - \frac{\mu}{2} \frac{u^2}{|y|^{2m}} x \right) \right] \, dx\\
&= -\int_{\partial C_\eps} \psi_n \left( - \cX - \frac12 |\nabla^m u|^2 x + \widetilde G(u)x - \frac{\mu}{2} \frac{u^2}{\eps^{2m}} x \right) \cdot \frac{y}{\eps} \, d\cH^{N-1}\\
&= -\int_{\partial C_\eps} \psi_n \left( - \cX \cdot \frac{y}{\eps} - \frac12 |\nabla^m u|^2 \eps + \widetilde G(u)\eps - \frac{\mu}{2} \frac{u^2}{\eps^{2m-1}} \right) \, d\cH^{N-1},
\end{align*}

\begin{equation*}
\cX :=
\begin{cases}
\displaystyle - \Delta^k(\nabla u \cdot x) \nabla \Delta^k u + \sum_{j=0}^{k-1} \Delta^{2k-j} u \nabla \Delta^j (\nabla u \cdot x) - \sum_{j=0}^{k-1} \Delta^j(\nabla u \cdot x) \nabla \Delta^{2k-j}u &\\
\displaystyle \nabla u \cdot x \nabla \Delta^{2k-1} u + \sum_{j=0}^{k-2} \Delta^{j+1}(\nabla u \cdot x) \nabla \Delta^{2k-j-2}u - \sum_{j=0}^{k-1} \Delta^{2k-j-1}u \nabla \Delta^j(\nabla u \cdot x)
\end{cases}
\end{equation*}
if $m=2k+1$ or $m=2k$ respectively, and, with a small abuse of notation, we mean $y = (y,0)$. For a fixed $n \ge 1$, since
$$
0 \le \cH^{N-1}(\partial C_\eps \cap \supp \psi_n) \lesssim \eps^{K-1} n^{N-K} \to 0
$$
as $\eps \to 0^+$, it is clear that
\[
\lim_{\eps \to 0^+} \int_{\partial C_\eps} \psi_n \left| - \cX \cdot \frac{y}{\eps} - \frac12 |\nabla^m u|^2 \eps + \widetilde G(u)\eps \right| \, d\cH^{N-1} = 0,
\]
so we focus on the remaining term. Observe that
$$\int_{\R^K}|y|^{-K}\vp(y)\,dy=\int_{\R^K}\int_{\R^{N-K}}\frac{1}{|y|^{2m}}u^2(y,z)\,dz\,dy<+\infty,$$
where $\vp(y):=|y|^{K-2m}\int_{\R^{N-K}}u^2(y,z)\,dz$, and that $\int_{\R^{N-K}}u^2(\cdot,z)\,dz\in W^{m,1}(\R^K)$.
Since $u$ is $\cG(K)$-symmetric, the trace 
$$
\vp_\eps := \vp|_{\mathbb{S}(\eps)} 
$$
is constant on $\mathbb{S}(\eps)$.
If $\liminf_{\eps\to 0}\vp_\eps>0$, then there exist $\delta,\rho>0$ such that $\vp_\eps\geq \delta$ for $0<\eps\leq \rho$. Then, denoting $\omega_{K-1} := \cH^{K-1}\left(\mathbb{S}^{K-1}(1)\right)$, in view of the coarea formula
\[
\int_{\R^K} |y|^{-K} \vp(y) \, dy \ge \int_{\mathbb{B}^K(\rho)} |y|^{-K} \vp(y) \, dy = \omega_{K-1} \int_0^\rho r^{-1} \vp_r dr \ge \delta \omega_{K-1} \int_0^\rho r^{-1} dr = +\infty
\]
and we get a contradiction. Therefore we find $(\eps_k)\subset (0,+\infty)$ such that $\vp_{\eps_k}\to 0$ as $k\to\infty$. Then
\begin{align*}
0 & \le \frac{1}{\eps_k^{2m-1}} \int_{\partial C_{\eps_k}} \psi_n u^2 \, d\cH^{N-1} \le \frac{1}{\eps_k^{2m-1}}  \int_{\mathbb{S}^{K-1}(\eps_k)}\int_{\R^{N-K}} u^2 \,dz\, d\cH^{K-1}_y\\
& = \eps_k^{1-K}  \int_{\mathbb{S}^{K-1}(\eps_k)}\vp_{\eps_k} d\cH^{K-1}_y = \omega_{K-1} \vp_{\eps_k} \to 0.
\end{align*}
Consequently, we have that $\lim_{k\to+\infty} B_{n,\eps_k} = 0$ for every $n \ge 1$. Moreover, from the dominated convergence theorem,
\begin{align*}
\lim_{k\to+\infty} A_{n,\eps_k} &= \lim_{k\to+\infty} \int_{C_{\eps_k}} \frac12 |\nabla^m u|^2 \nabla \psi_n \cdot x + \cX \cdot \nabla \psi_n + \frac{N-2m}{2} \psi_n |\nabla^m u|^2 - N \psi_n \widetilde G(u) \\
&\quad - \widetilde G(u) \nabla \psi_n \cdot x + \mu \left( \frac12 \frac{u^2}{|y|^{2m}} \nabla \psi_n \cdot x + \frac{N-2m}{2} \frac{u^2}{|y|^{2m}} \psi_n \right) \, dx\\
&= \int_{\rn} \frac12 |\nabla^m u|^2 \nabla \psi_n \cdot x + \cX \cdot \nabla \psi_n + \frac{N-2m}{2} \psi_n |\nabla^m u|^2 - N \psi_n \widetilde G(u) \\
&\quad - \widetilde G(u) \nabla \psi_n \cdot x + \mu \left( \frac12 \frac{u^2}{|y|^{2m}} \nabla \psi_n \cdot x + \frac{N-2m}{2} \frac{u^2}{|y|^{2m}} \psi_n \right) \, dx
\end{align*}
and so, from the properties of $\psi_n$ and again the dominated convergence theorem,
\[
0 = \lim_{n\to+\infty} \lim_{k\to+\infty} A_{n,\eps_k} + B_{n,\eps_k} = \int_{\R^N} (N-2m) \left(|\nabla^m u|^2 + \frac{\mu}{|y|^{2m}}u^2\right) - 2N \widetilde G(u) \, dx. \qedhere
\]
\end{proof}

\begin{Rem}\label{R:Smets}
Unless $N=K$, and even when $m=1$, we cannot argue in the proof above as in \cite[Proposition 2.2]{Smets}, i.e., with a sequence cut-off functions $(\phi_n)$ identically zero on the sets $\mathbb{B}^K(\frac1n) \times \R^{N-K}$. The reason is that we need the condition $|x||\nabla \phi_n(x)| \lesssim 1$, but due to the increasing slope in neighbourhoods of the aforementioned sets, we can only get $|x \cdot \nabla \phi_n(x)| \lesssim 1$ unless every $\phi_n$ is radially symmetric, which we need to rule out if $N>K$. Additionally, if $m\ge2$, the condition equivalent to $|x||\nabla \phi_n(x)| \lesssim 1$ involves higher derivatives of $\phi_n$ and so cannot be satisfied for small $x$, even when $N=K$. Similarly, in the cylindrical setting, we cannot argue as in the proof of \cite[Proposition 2.3]{LLT}.\\
Finally, having to integrate over the sets $C_\eps$ rather than the whole $\rn$, we need to express the foregoing identities for $\Delta^mu (\nabla u \cdot x)$ pointwise and not after integration over $\rn$; in particular, we need the terms
\[
\biggl(\frac12 (\Delta^ku)^2 x + (\nabla u \cdot x) \nabla \Delta^{2k-1}u + \sum_{j=0}^{k-2} \Delta^{j+1}(\nabla u \cdot x) \nabla \Delta^{2k-j-2}u - \sum_{j=0}^{k-1} \Delta^{2k-j-1}u \nabla \Delta^j(\nabla u \cdot x)\biggr) \psi_n,
\]
and
\[
\biggl(\Delta^k(x \cdot \nabla u) \nabla \Delta^k u - \frac{|\nabla \Delta^k u|^2}{2} x - \sum_{j=0}^{k-1} \Delta^{2k-j} u \nabla \Delta^j (\nabla u \cdot x) + \sum_{j=0}^{k-1} \Delta^j(\nabla u \cdot x) \nabla \Delta^{2k-j}u\biggr) \psi_n.
\]
To the best of our knowledge, these pointwise identities were proved only recently \cite{MedSie} when $m=2$ and $\mu=0$, and are completely new when $m\ge3$.
\end{Rem}
Now observe that 
from Proposition \ref{poh:local} we know that every solution $u\in X_{\cG(K)}^m$ to \eqref{eq} satisfies the following Poho\v{z}aev identity
$$
(N-2m)\int_{\R^N} |\nabla^{m} u| \, dx + (N-2m)\int_{\R^N} \frac{\mu}{|y|^{2m}} u^2 \, dx - 2N \int_{\R^N} G(u) - \frac{\lambda}{2} u^2 \, dx = 0
$$
provided that $g$ satisfies \eqref{eq:local1} and \eqref{eq:local2}.
Moreover, every solution to \eqref{eq} satisfies also the Nehari identity
$$
\int_{\R^N} |\nabla^{m} u|^2 \, dx + \int_{\R^N} \frac{\mu}{|y|^{2m}} u^2 \, dx+ \lambda \int_{\R^N} |u|^2 \, dx - \int_{\R^N} g(u)u \, dx = 0.
$$
In particular, every nontrivial solution from $X_{\cG(K)}^m$ lies in $\cM$, where
$$
\cM := \left\{ u \in X_{\cG(K)}^m \setminus \{0\} \ : \ M(u) = 0 \right\}
$$
and
$$
M(u) := \int_{\R^N} |\nabla^{m} u|^2 \, dx + \int_{\R^N} \frac{\mu}{|y|^{2m}} u^2 \, dx - \frac{N}{2m} \int_{\R^N} H(u) \, dx.
$$

\section{Normalized solutions to \eqref{eq}}\label{sec:main}

We recall the Gagliardo--Nirenberg inequality, i.e., there exists $C_{N,p}>0$ such that
\begin{equation}\label{eq:GN}
|u|_p \leq C_{N,p}  |\nabla^{m} u|^\delta_2  |u|_2^{1-\delta} \quad \text{for every } u \in H^{m}(\R^N),
\end{equation}
where $\delta := \frac{N}{m} \left( \frac12 - \frac{1}{p}\right)$ and $C_{N,p}$ is optimal.

Recall that
$$
\cS := \left\{ u \in L^2(\R^N)\ : \ \int_{\R^N} |u|^2 \, dx = \rho \right\}
$$
and
$$
\cD := \left\{ u \in  L^2(\R^N)\ : \ \int_{\R^N} |u|^2 \, dx \leq \rho \right\}.
$$

We introduce the $L^2$-critical exponent in $H^{m}(\R^N)$, i.e., $2_{*} := 2+\frac{4m}{N} < 2^*$. Given functions $f_1,f_2\colon\R\to\R$, we recall the following notation \cite{BM}: 
$f_1(s)\preceq f_2(s)$ for $s\in\R\setminus\{0\}$ provided that 
$f_1(s)\leq f_2(s)$ for all $s\in\R$ and for any $\gamma>0$ there is $|s|<\gamma$ such that $f_1(s)<f_2(s)$. Then we have the following integral charachterization of $\preceq$. The proof is omitted as it is similar as the proof of \cite[Lemma 2.1]{BM} in the case $m=1$.
\begin{Lem}
Suppose that $f_1, f_2 \colon \R\to\R$ are continuous, $f_1(s) \leq f_2(s)$ for every $s \in \R$, and for $j \in \{1,2\}$ there holds
\begin{align*}
&|f_j(s)| \lesssim s^2 + |s|^{2^*} && \hbox{if }N > 2m, \\
&\hbox{for all }q \ge 2\hbox{ and }\alpha>\frac{N(2\pi)^N}{\omega_{N-1}},\hbox{ it holds }\displaystyle |f_j(s)| \lesssim s^2 + |s|^{q}(e^{\alpha s^2}-1) && \hbox{if }N = 2m.
\end{align*}
Then $f_1(s) \preceq f_2(s)$ for $s \in \R \setminus \{0\}$ if and only if 
$$
\int_{\R^N} f_1(u) \, dx < \int_{\R^N} f_2(u) \, dx
$$
for every function $u \in H^m (\R^N)$.
\end{Lem}

The assumptions about $g$ are the following.
\begin{itemize}
	\item[(A0)] $g$ and $h:=H'$ are continuous and
	$$
	\begin{array}{ll}
	|g(s)|+|h(s)|\lesssim |s|+|s|^{2^*-1} & \text{if } N > 2m\\
	\text{for every } q \ge 2 \text{ and } \alpha > \frac{N(2\pi)^N}{\omega_{N-1}}, |g(s)|+|h(s)| \lesssim |s| + |s|^{q-1}(e^{\alpha s^2} - 1) & \text{if } N = 2m
	\end{array}
	$$
	for every $s \in \R$.
	\item[(A1)] $\eta:=\limsup_{|s|\to 0} H(s)/|s|^{2_{*}}<+\infty$.
	\item[(A2)] $\lim_{|s|\to+\infty} G(s)/|s|^{2_{*}}=+\infty$.
	\item[(A3)] $\begin{cases}\lim_{|s|\to+\infty} g(s)/|s|^{2^*-1}= 0, \quad N > 2m,\\ \lim_{|s|\to+\infty} g(s)/e^{\alpha s^2} = 0 \mbox{ for any } \alpha > 0, \quad N=2m. \end{cases}$
	\item[(A4)] $2_{*} H(s) \preceq  h(s)s$ for $s\in \R\setminus\{0\}$. 
	\item[(A5)] $0 \preceq \frac{4m}{N}G(s) \le H(s) \preceq (2^*-2)G(s)$ for $s \in\R\setminus\{0\}$, with the rightmost inequality holding if $N >2m$.
\end{itemize}

Observe that $0 \preceq G$ follows from the other two inequalities in (A5) when $N >2m$. Observe also that (A2) and the middle inequality in (A5) imply that there is $\zeta_0 \neq 0$ such that $H(\zeta_0) > 0$. Moreover, $\cM$ is a $\cC^1$-manifold in $X_{\cG(K)}^m$ with $\codim_{X_{\cG(K)}^m} \cM = 1$, see Lemma \ref{lem:manifold} below. It is easy to see that \eqref{example} satisfies all above assumptions; more examples are provided in \cite{BM,JeanjeanLuNorm}.

To simplify the notations, let
\begin{equation*}
C_{*} := C_{N,2_{*}},
\end{equation*}
where $C_{N,2_{*}}$ is defined in \eqref{eq:GN}.

\begin{Th}\label{th:main}
Suppose that \eqref{M}, (A0)--(A5) are satisfied and
\begin{equation}\label{eq:Hstrict}
\frac{N}{2m} \eta C_{*}^{2_{*}} \rho^{\frac{2m}{N}} \tau^{-1} < 1
\end{equation}
holds, $\tau$ being defined in \eqref{eq:tau}. Then there exist $\lambda>0$ and $u \in \cS \cap \cM$ such that
$$
J(u) = \inf_{\cD \cap \cM} J = \inf_{\cS \cap \cM} J > 0
$$
and $u \in \cS \cap \cM$ is a solution  of \eqref{eq} in $X_{\cG(K)}^m$. If, in addition, $g$ is odd and $m=1$, then $u$ is nonnegative.
\end{Th}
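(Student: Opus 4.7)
The plan is to construct the solution as a minimizer of $J$ on the restricted set $\cD\cap\cM$, to identify this minimum with $\inf_{\cS\cap\cM}J$, and then to exploit the Lagrange multipliers produced by the constrained minimization to obtain a solution of \eqref{eq} that automatically lies on $\cS$.

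First, on $\cM$ one has $J(u)=\int_{\rn}\bigl(\tfrac{N}{4m}H(u)-G(u)\bigr)\,dx\ge 0$ by (A5). I would then combine the constraint defining $\cM$ with (A1), (A3) (or Lemma \ref{lem:N=2s} when $N=2m$), the Gagliardo--Nirenberg inequality \eqref{eq:GN} applied at the critical exponent $2_*$ (where the relevant product of $H^m$-powers collapses to $\|\nabla^m u\|_2^2\,|u|_2^{4m/N}$), and the bound $|u|_2^2\le\rho$ in order to obtain coercivity of $J$ in the norm $\|\cdot\|_\mu$; the quantitative assumption \eqref{eq:Hstrict} is precisely what renders the leading mass-critical coefficient strictly sub-unit, so one also gets $\inf_{\cD\cap\cM}J>0$. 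The identity $\inf_{\cD\cap\cM}J=\inf_{\cS\cap\cM}J$ would follow from an explicit two-parameter rescaling of the form $u_{t,\theta}(x)=t\,u(\theta x)$, used to deform any $u\in\cD\cap\cM$ with $|u|_2^2<\rho$ into an element of $\cS\cap\cM$ of no larger energy; (A4) and (A5) would provide the monotonicity needed.

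Next, I would take a minimizing sequence $(u_n)\subset\cD\cap\cM$, which is bounded in $X^m_{\cG(K)}$ by coercivity, and extract a weak subsequential limit $u$. To rule out vanishing, assume $\lim_n\sup_{z\in\R^{N-K}}\int_{B((0,z),R)}u_n^2\,dx=0$ for every $R>0$; Lemma \ref{lem:Lions} combined with (A0)--(A3) would then force $\int H(u_n)\to 0$, contradicting $u_n\in\cM$ and the uniform lower bound established above. After translating each $u_n$ in the invariant $\R^{N-K}$ direction (which preserves $\cG(K)$-symmetry and the functional) one ensures $u\neq 0$. Weak lower semicontinuity of the quadratic part, together with Fatou's lemma or a Brezis--Lieb splitting for the nonlinear terms, gives $M(u)\le 0$ and $J(u)\le\liminf_n J(u_n)$; if $M(u)<0$, a rescaling would place $u$ back onto $\cM$ with strictly smaller energy, which is impossible. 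Hence $u\in\cD\cap\cM$ attains the infimum.

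Finally, $u$ satisfies $J'(u)=\lambda u+\nu M'(u)$ in $(X^m_{\cG(K)})^*$ for some $\lambda\ge 0$ (because of the one-sided constraint $|u|_2^2\le\rho$) and some $\nu\in\R$. Testing this identity against $u$ and against the infinitesimal generator of the $L^2$-preserving dilation $u\mapsto\theta^{N/2}u(\theta\cdot)$, and then subtracting, isolates an integral whose sign is controlled strictly by (A4); this compels $\nu=0$, so $u$ weakly solves \eqref{eq}. Combining the Poho\v{z}aev identity of Proposition \ref{poh:local} with the Nehari identity then upgrades $\lambda\ge 0$ to $\lambda>0$, which in turn forces $|u|_2^2=\rho$, i.e.\ $u\in\cS$. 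When $m=1$ and $g$ is odd, both $G$ and $H$ are even, so $|u|$ has the same mass and the same $J$- and $M$-values as $u$; replacing $u$ by $|u|$ and applying the strong maximum principle to $-\Delta u+(\mu|y|^{-2}+\lambda)u=g(u)$ on $\rn\setminus(\{0\}\times\R^{N-K})$ (where the singular potential is locally bounded) yields $u\ge 0$. The main obstacle will be the vanishing of $\nu$, without which the Euler--Lagrange equation produced by the constrained minimization is not the one sought; this step depends crucially on the strict part of (A4). A close second difficulty is the non-vanishing of the minimizing sequence, since neither Rellich--Kondrachov nor radial Strauss compactness is available in the cylindrical geometry, so one must rely on the symmetric Lions lemma together with a well-chosen translation in the invariant fibre.
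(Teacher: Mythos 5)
Your overall strategy coincides with the paper's (minimize $J$ on $\cD\cap\cM$, obtain multipliers $\lambda\ge 0$ and $\nu$ from the ball constraint, use (A4) to force $\nu=0$, then Nehari/Poho\v{z}aev and (A5) to get $\lambda>0$ and hence $u\in\cS$), but your compactness step contains a genuine gap. After translating the minimizing sequence so that $u_n(\cdot+(0,z_n))\weakto u\neq 0$, you claim that weak lower semicontinuity of the quadratic part together with Fatou's lemma (or a Brezis--Lieb splitting) yields $M(u)\le 0$. This does not follow: since $H\ge 0$ by (A5), Fatou gives $\int_{\R^N}H(u)\,dx\le\liminf_n\int_{\R^N}H(u_n)\,dx$, which is the \emph{wrong} direction, and combining it with $[u]_\mu^2\le\liminf_n [u_n]_\mu^2=\liminf_n\frac{N}{2m}\int_{\R^N}H(u_n)\,dx$ gives no comparison between $[u]_\mu^2$ and $\frac{N}{2m}\int_{\R^N}H(u)\,dx$. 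Part of the sequence may split off along further translations (dichotomy), carrying away a fraction of $\int_{\R^N}H(u_n)\,dx$, so the particular weak limit you selected can a priori satisfy $M(u)>0$. This is precisely where the paper uses the profile decomposition of Theorem \ref{ThGerard}: by \eqref{EqSplit2a}--\eqref{EqSplit3a}, summing over \emph{all} profiles, at least one profile $\tu_i$ must satisfy $[\tu_i]_\mu^2\le\frac{N}{2m}\int_{\R^N}H(\tu_i)\,dx$, i.e. $r(\tu_i)\ge 1$; then $\tu_i(r(\tu_i)\cdot)\in\cD\cap\cM$ and Fatou applied to the nonnegative integrand $H-\frac{4m}{N}G$ gives attainment (Lemma \ref{lem:c_0attained}). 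Your single-weak-limit argument needs to be replaced by this, or by an equivalent bookkeeping over all escaping pieces.

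Two secondary points. First, the Lagrange multiplier rule on $\cD\cap\cM$ requires non-degeneracy of the constraints: the paper proves $\vp_u''(1)\neq 0$ on $\cM$ (Lemma \ref{lem:noM0}) to obtain surjectivity of $\bigl(\Phi'(u),M'(u)\bigr)$ before invoking \cite[Proposition A.1]{MS}; you assert the rule without this. Moreover, ``testing against the generator of the dilation'' is only formal: it must be implemented through the Poho\v{z}aev identity of Proposition \ref{poh:local} applied to the Euler--Lagrange equation with multipliers (and the degenerate value of the $\cM$-multiplier, $\theta=-1$ in the paper's notation, has to be excluded first), since the minimizer lacks the regularity to justify the direct computation. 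Second, your two-parameter rescaling argument for $\inf_{\cD\cap\cM}J=\inf_{\cS\cap\cM}J$ is unsubstantiated for general $g$ and also unnecessary: the equality follows automatically once the minimizer over $\cD\cap\cM$ is shown to lie on $\cS$, which is how the paper concludes. These are fixable; the compactness gap above is the substantive one.
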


Observe that, when $\mu=0$, we do not need to consider $\cG(K)$-invariant functions and then we can find a normalized ground state solution $u_0\in\cM_0\cap\cS$ under the assumptions of Theorem \ref{th:main}. More precisely, we obtain the following generalization of \cite[Theorem 1.1]{BM}.

\begin{Th}\label{th:generalization}
Suppose that (A0)--(A5) are satisfied and
\begin{equation}
\frac{N}{2m} \eta C_{*}^{2_{*}} \rho^{\frac{2m}{N}} < 1
\end{equation}
holds. Then there exist $\lambda>0$ and $u \in \cS \cap \cM_0$ such that
$$
J_0(u) = \inf_{\cD \cap \cM_0} J_0 = \inf_{\cS \cap \cM_0} J_0 > 0
$$
and $u \in \cS \cap \cM_0$ is a normalized ground state solution of \eqref{eq} with $\mu = 0$ in $H^m (\R^N)$, where $J_0$ and $\cM_0$ are given by \eqref{intro:eq1} and \eqref{intro:M0} respectively. If, in addition, $g$ is odd and $m=1$, then $u$ is positive and radially symmetric.
\end{Th}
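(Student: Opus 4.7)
The plan is to follow the same overall scheme as the proof of Theorem \ref{th:main}, exploiting two simplifications that stem from the absence of the singular potential. First, since $\mu = 0$, the energy $J_0$ is translation-invariant, so instead of working in the $\cG(K)$-symmetric subspace we minimize over the full space $H^m(\R^N)$ and recover compactness for minimizing sequences by exploiting translations. Second, the threshold condition becomes the cleaner $\frac{N}{2m}\eta C_*^{2_*}\rho^{2m/N}<1$, without any symmetry correction factor.

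Concretely, I would first verify that $\cM_0$ is a $C^1$-manifold of codimension one in $H^m(\R^N)\setminus\{0\}$ and that $J_0$ is coercive and bounded below on $\cD\cap\cM_0$, using the Gagliardo--Nirenberg inequality \eqref{eq:GN}, assumption (A1), and the hypothesis on $\rho$. The strict inequality $\frac{4m}{N}G\preceq H$ from (A5) yields $J_0(u) = \int_{\R^N}\bigl(\tfrac{N}{4m}H(u)-G(u)\bigr)\,dx > 0$ on $\cM_0$, so $c:=\inf_{\cD\cap\cM_0}J_0 > 0$. Next, given a minimizing sequence $(u_n)\subset\cD\cap\cM_0$, vanishing in the sense of Lions is ruled out: if it held, Lemma \ref{lem:LIONS} applied to $G$ and $H$ (which satisfy \eqref{eq:L1}--\eqref{eq:L2} by virtue of (A0), (A1), (A3)) would force $\int_{\rn}H(u_n)\,dx\to 0$, hence $|\nabla^m u_n|_2\to 0$ through the $\cM_0$ constraint, whence $J_0(u_n)\to 0$, contradicting $c>0$. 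Therefore there exist $r,\delta>0$ and $y_n\in\rn$ with $\int_{B(y_n,r)}u_n^2\,dx\ge\delta$; setting $v_n := u_n(\cdot-y_n)$, translation invariance delivers a new minimizing sequence with $v_n\weakto v\ne 0$ in $H^m(\R^N)$ up to a subsequence, and $v\in\cD$ by weak lower semi-continuity of $|\cdot|_2$.

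To conclude that $v$ realizes the infimum, I would invoke a Brezis--Lieb type decomposition showing $M(v)\le 0$ and then use the $L^2$-preserving dilation $v\mapsto \theta^{N/2}v(\theta\cdot)$ to project $v$ back onto $\cM_0$: the mass-supercritical growth encoded in (A2) together with (A5) ensures that the dilated function belongs to $\cD$ and that its energy does not exceed $c$, which combined with $J_0(v)\le\liminf_n J_0(v_n)$ forces $v\in\cM_0$, $|v|_2^2=\rho$, and $J_0(v)=c$. The Lagrange multiplier rule combined with Proposition \ref{poh:local} (applied with $\mu=0$) then produces the Euler--Lagrange equation with some $\lambda\in\R$; eliminating $|\nabla^m u|_2^2$ between the Nehari, Poho\v{z}aev and $\cM_0$ identities expresses $\lambda\rho$ in terms of integrals of $G$ and $g(u)u$, and the strict inequality in (A5) turns the resulting relation into $\lambda\rho>0$.

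For the final claim, when $m=1$ and $g$ is odd, $G$ is even so that the Schwarz rearrangement $|u|^*$ preserves $|u|_2$ and $\int G(u)\,dx$ without increasing $|\nabla u|_2$; hence $|u|^*$ is still a minimizer, and standard arguments (equality in the P\'olya--Szeg\H{o} inequality together with the strong maximum principle applied to the elliptic equation) deliver positivity and radial symmetry. The principal obstacle will be the Brezis--Lieb step and the fine-tuning of the dilation so that the weak limit lands exactly on $\cM_0\cap\cS$: this is where the interplay between (A2), (A4), and (A5) has to be handled carefully, and is essentially the same argument as in the proof of Theorem \ref{th:main}, just carried out in $H^m(\R^N)$ rather than in $X^m_{\cG(K)}$.
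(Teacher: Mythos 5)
Your outline reproduces the paper's general scheme (fibering map and rescaling onto $\cM_0$, coercivity, $c>0$, translations to capture a nontrivial weak limit, Lagrange multipliers plus the Poho\v{z}aev identity, Schwarz symmetrization when $m=1$), but the step you yourself call the ``principal obstacle'' is a genuine gap, not a technicality to be fine-tuned. After ruling out vanishing and translating so that $v_n:=u_n(\cdot-y_n)\weakto v\neq0$, there is no ``Brezis--Lieb type decomposition showing $M(v)\le 0$''. Brezis--Lieb only yields $|\nabla^m u_n|_2^2=|\nabla^m v|_2^2+|\nabla^m w_n|_2^2+o(1)$ and $\int_{\rn}H(u_n)\,dx=\int_{\rn}H(v)\,dx+\int_{\rn}H(w_n)\,dx+o(1)$ with $w_n:=v_n-v$, so the constraint $u_n\in\cM_0$ gives only that the defect of $v$ and the asymptotic defect of $w_n$ sum to zero; nothing prevents the captured bubble from satisfying $|\nabla^m v|_2^2-\frac{N}{2m}\int_{\rn}H(v)\,dx>0$, with the compensating negative part carried by a second bubble inside $w_n$ (think of $u_n\approx\phi_1(\cdot-z_n^1)+\phi_2(\cdot-z_n^2)$ with $|z_n^1-z_n^2|\to\infty$ and the two defects of opposite sign). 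In that case $r(v)<1$ in \eqref{def:r}, the rescaled function $v(r(v)\cdot)$ has $L^2$-norm larger than $|v|_2$ and may leave $\cD$, and your projection argument collapses. This is precisely why the paper proves the profile decomposition of Theorem \ref{ThGerard} (valid in $H^m(\R^N)$ with translations of the whole space when $\mu=0$): identity \eqref{EqSplit3a} says that the entire $H$-mass is carried by the profiles, so assuming a strictly positive defect for every nonzero profile contradicts the constraint, and hence \emph{some} profile $\tu_i$ satisfies $|\nabla^m\tu_i|_2^2\le\frac{N}{2m}\int_{\rn}H(\tu_i)\,dx$; the dilation/Fatou argument of Lemma \ref{lem:c_0attained} is then applied to that particular profile, not to an arbitrary weak limit. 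To close your proof you must either run this multi-profile argument or replace it by another device (e.g.\ a strict subadditivity/dichotomy analysis of $\rho\mapsto c(\rho)$); a single application of Brezis--Lieb does not suffice.

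Two further inaccuracies, secondary but worth fixing. First, $J_0$ is not weakly lower semicontinuous (the term $-\int_{\rn}G(u)\,dx$ has no sign), so ``$J_0(v)\le\liminf_nJ_0(v_n)$'' is not available; the paper instead uses $J_0=J_0-\frac12M_0$ on $\cM_0$ and applies Fatou's lemma to the nonnegative integrand $\frac{N}{4m}H-G$ coming from (A5). Second, the dilation step only produces a minimizer in $\cD\cap\cM_0$; membership in $\cS$ is not forced there, but follows afterwards from the Lagrange multiplier argument, since $\lambda=0$ whenever the minimizer lies in the open $L^2$-ball and one then excludes $\lambda=0$ via the Nehari and Poho\v{z}aev identities together with (A5). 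Relatedly, for the symmetry statement you do not need an equality case in P\'olya--Szeg\H{o} (Brothers--Ziemer): the theorem only claims the existence of a positive radial minimizer, so it is enough, as in the paper, to symmetrize (the minimizing sequence or the minimizer), rescale by $r(\cdot)\ge1$ to return to $\cM_0$ while staying in $\cD$, use the strict-inequality argument of Lemma \ref{lem:c_0attained} to force $r=1$, and finish with the maximum principle.
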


Below we justify the use of symmetry $\cG(K)$, i.e., there are no normalized ground state solutions provided that $\mu>0$ and $K > 2m$.
	
\begin{Th}\label{thm:nonexistence} 
Assume that $N\geq K >2m$, $\mu>0$, and suppose that \eqref{M}, (A0)--(A5), and \eqref{eq:Hstrict} are satisfied. Then the following problems admit no minimizers 
\begin{align*}
	&\inf\left\{J(u)\ :\ u\in H^{m}(\R^N)\setminus\{0\}, \int_{\R^N} |\nabla^{m} u|^2 + \frac{\mu}{|y|^{2m}} u^2 \, dx=\frac{N}{2m} \int_{\R^N} H(u) \, dx,\hbox{and }u\in\cD \right\},\\
	&\inf\left\{J(u)\ :\ u\in H^{m}(\R^N)\setminus\{0\}, \int_{\R^N} |\nabla^{m} u|^2 + \frac{\mu}{|y|^{2m}} u^2 \, dx=\frac{N}{2m} \int_{\R^N} H(u) \, dx,\hbox{and }u\in\cS \right\}.
\end{align*}
In particular, \eqref{eq} has no normalized ground state solutions in $H^{m}(\R^N)$.
\end{Th}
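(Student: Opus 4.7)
Let $c_0$ denote the infimum from Theorem~\ref{th:generalization} (minimized over $H^m(\R^N)$ without symmetry), attained by some $u_0\in\cM_0\cap\cS$, and let $c_\cD,c_\cS$ denote the two infima in the statement. I shall show $c_0=c_\cD=c_\cS$ together with the strict inequality $J(v)>c_0$ for every admissible $v$, from which non-attainment of both infima is immediate. The common feature used throughout is the representation $v\mapsto\int_{\R^N}(\tfrac{N}{4m}H(v)-G(v))\,dx$ of $J$ on $\cM$ and of $J_0$ on $\cM_0$, obtained by substituting the respective constraint into the definition.

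\emph{Step 1 (strict lower bound).} Fix an admissible $v$ for the first infimum and set $v_t(x):=v(x/t)$. A change of variables yields
\[
M_0(v_t)=t^{N-2m}\int|\nabla^m v|^2\,dx-t^N\tfrac{N}{2m}\int H(v)\,dx.
\]
Since $M(v)=0$ and $v\not\equiv 0$, one has $M_0(v_1)=-\mu\int v^2/|y|^{2m}\,dx<0$ strictly (positivity of the singular integral follows from $\mu>0$ combined with Hardy's inequality, Lemma~\ref{lem:Hardy}, since $K>2m$), whereas $M_0(v_t)>0$ for $t$ small. The intermediate value theorem yields $t^*\in(0,1)$ with $v_{t^*}\in\cM_0$, and $|v_{t^*}|_2^2=(t^*)^N\rho<\rho$ gives $v_{t^*}\in\cD$. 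The representation above then delivers $J_0(v_{t^*})=(t^*)^N J(v)$, so $c_0\leq J_0(v_{t^*})=(t^*)^N J(v)<J(v)$.

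\emph{Step 2 (upper bound).} Define the translated-rescaled family
\[
w_s^n(x):=s^{N/2}u_0\bigl(sx-(y_n,0)\bigr),\qquad |y_n|\to+\infty,\ s>0,
\]
so $|w_s^n|_2^2=\rho$. The quantity $M_0(w_s^n)=s^{2m}|\nabla^m u_0|_2^2-\tfrac{N}{2m}s^{-N}\int H(s^{N/2}u_0)\,dx$ is independent of $n$ and vanishes at $s=1$; a direct differentiation at $s=1$, combined with the strict form of (A4) via its integral characterization, gives $\tfrac{d}{ds}M_0(w_s)|_{s=1}<0$, and hence $M_0(w_{s_0})<0$ for some $s_0>1$. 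A density-plus-Hardy argument (splitting $u_0=\phi+\psi$ with $\phi\in\cC_0^\infty$ and $\|\psi\|_{H^m}$ arbitrarily small, then using compact support of $\phi$ and Hardy's inequality on $\psi$) yields $\int u_0^2(x')/|y'+y_n|^{2m}\,dx'\to 0$ as $|y_n|\to+\infty$, hence $M(w_s^n)\to M_0(w_s)$ for each $s$. Since $M(w_1^n)>0$ with $M(w_1^n)\to 0^+$ and $M(w_{s_0}^n)\to M_0(w_{s_0})<0$, the IVT produces $s_n\in(1,s_0)$ with $w_{s_n}^n\in\cM\cap\cS$ and necessarily $s_n\to 1$. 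Continuity of $J_0(w_s)$ at $s=1$ together with the vanishing of the singular term then delivers $J(w_{s_n}^n)\to J_0(u_0)=c_0$, so $c_\cS\leq c_0$.

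Combining with the trivial $c_\cD\leq c_\cS$ yields $c_0=c_\cD=c_\cS$, and the strict inequality of Step~1 rules out minimizers for either infimum; the ``In particular'' statement then follows, since any normalized ground state would realize one of them. The two chief technical hurdles are (i)~the sign computation of $\tfrac{d}{ds}M_0(w_s)|_{s=1}$, which requires the strict inequality in (A4) and a careful matching of the scaling-derivative of $\int H(s^{N/2}u_0)\,dx$ against the scaling of $|\nabla^m u_0|_2^2$; and (ii)~the decay $\int u_0^2/|y'+y_n|^{2m}\,dx'\to 0$ for a general $u_0\in H^m$, handled by the density-plus-Hardy splitting sketched above.
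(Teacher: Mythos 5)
Your argument is correct and follows the same overall strategy as the paper's proof: both rest on comparison with the $\mu=0$ level $c_0$ from Theorem \ref{th:generalization}, on pushing the translated ground state $u_0(\cdot-(y_n,0))$ to infinity so that the singular term vanishes (the paper's Lemma \ref{lem:MedGuo-polyharm}, which is exactly your density-plus-Hardy step), and on the strict positivity of $\mu\int_{\R^N}u^2/|y|^{2m}\,dx$ to produce a strict gap. Where you differ is the execution: the paper argues by contradiction with a putative minimizer $u_1$ and uses the mass-preserving scaling $s^{N/2}u(s\cdot)$ together with Lemma \ref{lem:phi} in both directions (the projection parameters $s_\theta$ and the chain $J(u_1)\ge J(s^{N/2}u_1(s\cdot))>J_0(s^{N/2}u_1(s\cdot))\ge J_0(u_0)$), whereas you avoid Lemma \ref{lem:phi}: for the lower bound you use the dilation $v(\cdot/t)$, which shrinks the mass (so the projected function stays in $\cD$) and yields the clean identity $J_0(v_{t^*})=(t^*)^N J(v)$, and for the upper bound you locate $s_n$ by an intermediate value argument on $s\mapsto M(w^n_s)$ combined with the sign of $\tfrac{d}{ds}M_0(w_s)$ at $s=1$. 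This buys a slightly stronger formulation (the three levels coincide and no admissible function attains them, rather than a contradiction from an assumed minimizer), at the price of two points you should make explicit: (i) in Step 1 the strict inequality $(t^*)^N J(v)<J(v)$ needs $J(v)>0$, which follows from $c_0>0$ together with $c_0\le (t^*)^N J(v)$; (ii) in Step 2, ``necessarily $s_n\to 1$'' requires strict monotonicity of $s\mapsto s^{-N-2m}\int_{\R^N}H(s^{N/2}u_0)\,dx$, which does follow from (A4) via its integral characterization (the same ingredient as your derivative computation), or can be bypassed as in the paper by observing that any limit point $s_\infty$ of $(s_n)$ gives $\lim_n J(w^n_{s_n})=J_0(s_\infty^{N/2}u_0(s_\infty\cdot))\le J_0(u_0)$ by Lemma \ref{lem:phi}, which is all the upper bound needs.
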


\section{The proof}\label{sec:proofs}

Observe that if $u\in X_{\cG(K)}^m\setminus\{0\}$ and $N \ge 2m$, then by (A5), $0 \preceq H(t)$ for $t\in\R \setminus\{0\}$ and $\int_{\R^N}H(u)\,dx>0$. Moreover, $u(r(u)\cdot)\in\cM$ for
\begin{equation}\label{def:r}
r(u) :=  \left( \frac{\frac{N}{2m} \int_{\R^N} H(u) \, dx}{ \int_{\R^N} |\nabla^{m} u|^2 + \frac{\mu}{|y|^{2m}} u^2 \, dx} \right)^{\frac{1}{2}},
\end{equation}
hence $\cM$ is nonempty.

To simplify the notations, we define
$$
[u]_\mu := \left(  \int_{\R^N} |\nabla^{m} u|^2 + \frac{\mu}{|y|^{2m}} u^2 \, dx \right)^{1/2} \geq \tau^{1/2} \left( \int_{\R^N} |\nabla^{m} u|^2 \, dx \right)^{1/2} =: \tau^{1/2} [u],
$$
where
\begin{equation}\label{eq:tau}
\tau :=
\begin{cases}
\displaystyle 1 + \left( \frac{2^m\Gamma\left(\frac{K+2m}{4}\right)}{\Gamma\left(\frac{K-2m}{4}\right)} \right)^2 \mu  & \text{if } \mu < 0,\\
1 & \text{if } \mu \ge 0.
\end{cases}
\end{equation}
Note that $\tau$ is well-defined because $\mu < 0$ implies $K > 2m$, see \eqref{M}.

First, we will show that $\cM$ is a differentiable manifold.
\begin{Lem}\label{lem:manifold}
Suppose that \eqref{M} and (A0) hold and that there exists $\zeta_0 \ne 0$ such that $H(\zeta_0) > 0$. Then $\cM$ is a $\cC^1$-manifold in $X_{\cG(K)}^m$ of codimension 1.
\end{Lem}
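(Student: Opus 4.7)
The plan is the standard one for constraint manifolds defined by a single equation: I would write $\cM = M^{-1}(0) \cap (X^m_{\cG(K)}\setminus\{0\})$ and verify, via the implicit function theorem, that (i) $M\in C^1(X^m_{\cG(K)},\R)$ and (ii) $M'(u)\neq 0$ for every $u\in\cM$. Since $\cM$ does not contain $0$ and $X^m_{\cG(K)}\setminus\{0\}$ is open in $X^m_{\cG(K)}$, it is enough to regard $\cM$ as a level set of $M$ on this open subset.

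For (i), the quadratic part $u\mapsto [u]_\mu^2$ is a continuous bilinear form on $X^m_{\cG(K)}\times X^m_{\cG(K)}$, hence smooth: when $\mu\geq 0$ this is immediate, and when $\mu<0$ (so necessarily $K>2m$) the Hardy--Rellich type inequality of Lemma \ref{lem:Hardy} bounds $\int |y|^{-2m}u^2\,dx$ by a constant times $[u]^2$, so the form is well-defined and continuous. For the nonlinear part $u\mapsto \frac{N}{2m}\int_{\R^N} H(u)\,dx$, one applies the standard Nemytskii-style argument: the growth assumption (A0) combined with the continuous embeddings $H^m(\R^N)\hookrightarrow L^p(\R^N)$ for $p\in[2,2^*]$ when $N>2m$, or with the Moser--Trudinger inequality \eqref{eq:MT} and Lemma \ref{lem:N=2s} when $N=2m$, shows that this functional is $C^1$ with Fr\'echet derivative $v\mapsto \frac{N}{2m}\int_{\R^N} h(u)v\,dx$.

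For (ii), I compute
\[
M'(u)[u]=2[u]_\mu^2-\frac{N}{2m}\int_{\R^N}h(u)u\,dx.
\]
Since $u\in\cM$, we have $[u]_\mu^2=\frac{N}{2m}\int_{\R^N}H(u)\,dx$, so
\[
M'(u)[u]=\frac{N}{2m}\left(2\int_{\R^N}H(u)\,dx-\int_{\R^N}h(u)u\,dx\right).
\]
Assumption (A4) gives $2_{*}H(s)\preceq h(s)s$ for $s\in\R\setminus\{0\}$; since $u\neq 0$ and $u\in X^m_{\cG(K)}\subset H^m(\R^N)$, the integral characterization of $\preceq$ stated earlier yields the strict inequality $\int_{\R^N} h(u)u\,dx>2_{*}\int_{\R^N}H(u)\,dx=(2+\tfrac{4m}{N})\int_{\R^N}H(u)\,dx$. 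Plugging this into the displayed expression gives
\[
M'(u)[u]<\frac{N}{2m}\left(2-2_{*}\right)\int_{\R^N}H(u)\,dx=-2\int_{\R^N}H(u)\,dx.
\]
By the middle inequality in (A5) combined with (A2), $\int_{\R^N}H(u)\,dx>0$ for every $u\in X^m_{\cG(K)}\setminus\{0\}$; hence $M'(u)[u]<0$, so in particular $M'(u)\neq 0$. The preimage theorem then yields that $\cM$ is a $C^1$-submanifold of codimension $1$ in $X^m_{\cG(K)}$.

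The main delicate point is the strict inequality in (ii): one must ensure that the integral characterization of $\preceq$ truly produces a strict inequality (and not just $\leq$) on elements of $X^m_{\cG(K)}\setminus\{0\}$. This is guaranteed because such $u$ are nontrivial in $H^m(\R^N)$ and the characterization (together with (A0) and the growth of $h$) applies unchanged to this subspace. Positivity of $\int H(u)\,dx$, which is likewise needed to conclude $M'(u)[u]<0$ rather than merely $\leq 0$, follows from the same type of $\preceq$-reasoning applied to the middle inequality of (A5).
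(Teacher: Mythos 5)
Your argument is internally sound, but it does not prove the lemma under its stated hypotheses, and it takes a genuinely different route from the paper's. The paper assumes only \eqref{M} and (A0): it supposes $M'(u)=0$ in the dual of $X_{\cG(K)}^m$, so that $u$ weakly solves $(-\Delta)^m u+\frac{\mu}{|y|^{2m}}u=\frac{N}{2m}h(u)$, and then applies the Poho\v{z}aev identity of Proposition \ref{poh:local} to this equation (which is legitimate because (A0) gives $h$ the growth \eqref{eq:local1}--\eqref{eq:local2}); combined with the constraint $[u]_\mu^2=\frac{N}{2m}\int_{\R^N}H(u)\,dx$ this yields $(N-2m)[u]_\mu^2=2N[u]_\mu^2$, hence $[u]_\mu=0$ and $u=0$, a contradiction. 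Your step (ii) instead tests $M'(u)$ against $u$ itself and invokes (A4) via the integral characterization of $\preceq$. That is a simpler and more elementary mechanism, avoiding the Poho\v{z}aev identity entirely, but (A4) is not among the hypotheses of the lemma, which assumes only \eqref{M} and (A0); under (A0) alone the strict inequality $\int_{\R^N} h(u)u\,dx>2_{*}\int_{\R^N}H(u)\,dx$ is simply unavailable, so as a proof of the statement as written your argument has a gap. It does suffice wherever the lemma is actually used, since the main theorems assume (A0)--(A5), and it is close in spirit to the paper's Lemma \ref{lem:noM0}, which is exactly where (A4) is exploited in this way.

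Two smaller remarks. First, your appeal to (A2) and (A5) for the positivity of $\int_{\R^N}H(u)\,dx$ is unnecessary: for $u\in\cM$ one has $\frac{N}{2m}\int_{\R^N}H(u)\,dx=[u]_\mu^2\ge\tau[u]^2>0$, because $\nabla^m u=0$ together with $u\in L^2(\R^N)$ forces $u=0$; moreover, since your first inequality is already strict, nonnegativity of the integral would have sufficed. Second, part (i) of your plan (the $\cC^1$-regularity of $M$, including the critical or exponential growth allowed by (A0) and the continuity of the singular quadratic form via Lemma \ref{lem:Hardy}) is taken for granted in the paper's proof as well, so the level of detail there is not an issue.
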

\begin{proof}
Suppose that $u \in \cM$. Then $[u]_\mu^2 = \frac{N}{2m} \int_{\R^N} H(u) \, dx$. Suppose that $M'(u) = 0$, i.e., $u$ is a weak solution to 
$$
(-\Delta)^{m} u + \frac{\mu}{|y|^{2m}} u = \frac{N}{2m} h(u).
$$
From the Poho\v{z}aev identity (Proposition \ref{poh:local}), we get that $(N-2m) [u]_\mu^2 = 2N \int_{\R^N} \frac{N}{2m} H(u) \, dx$. Thus 
$$
(N-2m) [u]_\mu^2 = 2N [u]_\mu^2
$$
and $[u]_\mu = 0$. Thus $M'(u) \neq 0$ for all $u \in \cM$ and the proof is complete.
\end{proof}

\begin{Lem}\label{lem1}
$$
\inf_{u \in \cD \cap \cM} [u]_\mu > 0.
$$
\end{Lem}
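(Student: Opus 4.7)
The plan is to argue by contradiction. Suppose there is a sequence $(u_n) \subset \cD \cap \cM$ with $[u_n]_\mu \to 0$. Since $[u_n]_\mu^2 = \frac{N}{2m}\int_{\R^N} H(u_n)\,dx$ by definition of $\cM$, it suffices to produce an upper bound on $\int_{\R^N} H(u_n)\,dx$ which, under \eqref{eq:Hstrict}, contradicts this identity unless $[u_n]_\mu$ is bounded away from zero.

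The first step is to split $H$ according to its behavior near the origin and at infinity. Using (A1), for every $\eps>0$ there is $C_\eps>0$ such that
\[
H(s) \le (\eta+\eps)|s|^{2_*} + C_\eps R(s) \quad \text{for all } s\in\R,
\]
where $R(s) := |s|^{2^*}$ if $N>2m$ (using (A0) for the large-$s$ remainder), and, if $N=2m$, $R(s):=|s|^{q}(e^{\alpha s^2}-1)$ for some $q>2$ and some $\alpha>N(2\pi)^N/\omega_{N-1}$ (both to be chosen). Integrating this pointwise bound and invoking the Gagliardo--Nirenberg inequality \eqref{eq:GN} with $p=2_*$, noting that $2_*\delta = 2$ and $2_*-2 = 4m/N$, gives
\[
\int_{\R^N} |u_n|^{2_*}\,dx \le C_*^{2_*} |\nabla^m u_n|_2^2 |u_n|_2^{4m/N} \le C_*^{2_*} \rho^{2m/N} [u_n]^2 \le \frac{C_*^{2_*} \rho^{2m/N}}{\tau} [u_n]_\mu^2,
\]
where we used $|u_n|_2^2\le\rho$ and $[u_n]^2\le[u_n]_\mu^2/\tau$.

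Next I would show that $\int_{\R^N} R(u_n)\,dx = o([u_n]_\mu^2)$ as $n\to\infty$. If $N>2m$, Sobolev's embedding yields $\int_{\R^N} R(u_n)\,dx \lesssim [u_n]^{2^*} \lesssim [u_n]_\mu^{2^*}$, which is $o([u_n]_\mu^2)$ since $2^*>2$. If $N=2m$, since $\|u_n\|_{H^m}^2 \le [u_n]_\mu^2/\tau + \rho$ is eventually bounded by a constant $M^2$ as close to $\rho$ as we wish, we can pick $\alpha$ just above $N(2\pi)^N/\omega_{N-1}$ and $t>1$ close to $1$ with $\alpha t M^2 < N(2\pi)^N/\omega_{N-1}$, then apply Lemma \ref{lem:N=2s} to obtain $\int_{\R^N} R(u_n)\,dx \lesssim |u_n|_{qt/(t-1)}^q$; for $q>2$ and $t$ close to $1$, an interpolation bound of the form $|u_n|_p^p \lesssim [u_n]^{p-2}|u_n|_2^2$ again yields a quantity that is $o([u_n]_\mu^2)$.

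Combining these estimates with $[u_n]_\mu^2 = \frac{N}{2m}\int_{\R^N} H(u_n)\,dx$ and dividing by $[u_n]_\mu^2>0$,
\[
1 \le \frac{N}{2m}(\eta+\eps) C_*^{2_*} \rho^{2m/N} \tau^{-1} + o(1),
\]
and letting $n\to\infty$ and then $\eps\to 0^+$ contradicts \eqref{eq:Hstrict}. The main obstacle is the $N=2m$ case, where the bookkeeping between the parameters $\eps$, $q$, $\alpha$, and $t$, together with the fact that $M$ can only be taken close to $\sqrt{\rho}$ (not to $0$), requires a careful choice of constants to ensure that the Moser--Trudinger remainder is genuinely lower order than $[u_n]_\mu^2$.
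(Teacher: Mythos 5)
Your argument for $N>2m$ is correct and is essentially the paper's proof: the same splitting of $H$ via (A1) plus a $|s|^{2^*}$ remainder, the Gagliardo--Nirenberg inequality combined with $|u|_2^2\le\rho$ and $[u]^2\le\tau^{-1}[u]_\mu^2$, absorption of the main term thanks to \eqref{eq:Hstrict}, and the observation that the remainder is of order $[u]_\mu^{2^*}=o\left([u]_\mu^2\right)$; phrasing it as a contradiction along a sequence with $[u_n]_\mu\to0$ rather than as a direct absorption is only cosmetic.

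The case $N=2m$, however, contains a genuine gap, precisely at the point you flag. You propose to take $\alpha$ just above the Moser--Trudinger threshold $N(2\pi)^N/\omega_{N-1}$ (so that the pointwise bound on $H$ at infinity comes from the (A0)-type growth) and then $t>1$ close to $1$ with $\alpha t M^2<N(2\pi)^N/\omega_{N-1}$, where $M^2$ is close to $\rho$. These two requirements are incompatible unless $\rho<1$: since $\alpha>N(2\pi)^N/\omega_{N-1}$, the hypothesis $\alpha M^2<N(2\pi)^N/\omega_{N-1}$ of Lemma \ref{lem:N=2s} forces $M^2<1$, while $M^2\approx\rho$ and no smallness of $\rho$ in this sense is assumed (\eqref{eq:Hstrict} does not give it; for instance it imposes no restriction on $\rho$ when $\eta=0$). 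The way out, which is what the paper does, is the opposite choice of parameters: first reduce to $\|u\|_{H^{N/2}}^2\le 2\rho$ (via $[u]_\mu^2\ge\tau\left(\|u\|_{H^{N/2}}^2-\rho\right)$, or via your hypothesis $[u_n]_\mu\to0$), then fix a \emph{small} $\alpha\in\left(0,\,N(2\pi)^N/(2\rho\,\omega_{N-1})\right)$ and the correspondingly \emph{large} $t=N(2\pi)^N/(2\rho\,\alpha\,\omega_{N-1})$; the pointwise estimate $H(s)\le(\eta+\eps)|s|^{2_{*}}+c_\eps s^4\left(e^{\alpha s^2}-1\right)$ for such small $\alpha$ is obtained from (A1) together with the subcritical exponential behaviour in (A3), not from the near-critical bound in (A0). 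With this choice Lemma \ref{lem:N=2s} applies and, after the interpolation you indicate, the remainder is $\lesssim[u]_\mu^{2+2/t}$, which is still superquadratic in $[u]_\mu$, and the conclusion follows as in the case $N>2m$.
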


\begin{proof}
Let us start with the case $N > 2m$.
In view of (A1), (A3), and (A5) for any $\eps>0$ there is $c_\eps>0$ such that
\begin{equation*}
	H(s) \leq (\eps+\eta) |s|^{2_{*}} + c_\varepsilon |s|^{2^*}
\end{equation*}
for any $s\in\R$. From \eqref{eq:GN}
	\begin{align*}
	|u|_{2_{*}} \leq C_{*} \left( \int_{\R^N} |\nabla^{m} u|^2 \, dx \right)^{N/(4m+2N)} |u|_2^{2m/(2m+N)} \leq C_{*} \rho^{m/(2m+N)} [u]^{N/(2m+N)}.
	\end{align*}
	Since $u \in \cD \cap \cM$, we get (note that $c_\eps$ varies from one line to another)
	\begin{align*}
	[u]_\mu^2 &= \frac{N}{2m}   \int_{\R^N} H(u) \, dx \leq \frac{N}{2m} \left( (\eta + \eps) |u|_{2_{*}}^{2_{*}} + c_\varepsilon |u|_{2^*}^{2^*} \right)\\
	&\leq \frac{N}{2m} \left( (\eta + \eps) C_{*}^{2_{*}} \rho^{2m/N} [u]^2 + c_\eps [u]^{2^*} \right) \\
	&\leq \frac{N}{2m} \left( (\eta + \eps) C_{*}^{2_{*}} \rho^{2m/N} \tau^{-1} [u]_\mu^2 + c_\eps [u]_\mu^{2^*} \right) \\
	&\leq \frac{N}{2m} (\eta + \eps) C_{*}^{2_{*}} \rho^{2m/N} \tau^{-1} [u]_\mu^2 + c_\eps [u]_\mu^{2^*}.
	\end{align*}
	Taking $\eps$ sufficiently small, we obtain that
	$[u]_\mu^2$ is bounded away from $0$ on $\cD\cap\cM$ provided that \eqref{eq:Hstrict} holds.

Now let us consider the case $N=2m$. Observe that
\[
[u]_\mu^2 \ge \tau \left(\|u\|_{H^{N/2}}^2 - |u|_2^2\right) \ge \tau \left(\|u\|_{H^{N/2}}^2 - \rho\right),
\]
therefore it is enough to prove that
\[
\inf \left\{[u]_\mu : u \in \cD \cap \cM \text{ and } \|u\|_{H^{N/2}}^2 \le 2\rho\right\} > 0.
\]
Let us take $\displaystyle \alpha \in \left(0, \frac{N(2\pi)^N}{2 \rho \omega_{N-1}}\right)$. Note that in this case $2_* = 4$. In view of (A1) and (A3), for every $\eps > 0$ there exists $c_\eps > 0$ such that
\[
H(u) \le (\eta + \eps) u^4 + c_\eps u^4 \left(\exp(\alpha u^2) - 1\right).
\]
Let us take $\displaystyle t := \frac{N(2\pi)^N}{2\rho \alpha \omega_{N-1}}$. Then, from Lemma \ref{lem:N=2s} and arguing as before,
\begin{align*}
[u]_\mu^2 &=   \int_{\R^{2m}} H(u) \, dx \leq  (\eta + \eps) |u|_{4}^{4} + c_\varepsilon \int_{\R^{2m}} u^4 \left(\exp(\alpha u^2) - 1\right) \, dx \\
& \le  (\eta + \eps) C_{*}^{4} \rho \tau^{-1} [u]_\mu^2 + c_\eps |u|_{4t/(t-1)}^4\\
& \le (\eta + \eps) C_{*}^{4} \rho \tau^{-1} [u]_\mu^2 + c_\eps [u]_\mu^{2+2/t}
\end{align*}
and we conclude similarly.
\end{proof}

We note the following, easy to check, scaling properties.

\begin{Lem}\label{lem:scaling}
Fix $s > 0$ and $u \in X_{\cG(K)}^m \setminus \{0\}$. Let $v := s^{N/2} u(s \cdot)$. Then
\begin{enumerate}
\item[(1)] $|v|_{2} = |u|_{2}$, 
\item[(2)] $[v]_\mu^2 = s^{2m} [u]_\mu^2$.
\end{enumerate}
\end{Lem}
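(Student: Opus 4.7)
The plan is to verify both identities by a direct change of variable $\xi = sx$ combined with the chain rule for $\nabla^m$. Since the operator $(-\Delta)^m + \mu/|y|^{2m}$ is homogeneous of degree $2m$ under the dilation $x \mapsto sx$ (both the polyharmonic part and the inverse-square potential scale as $s^{2m}$), and since the prefactor $s^{N/2}$ in the definition of $v$ is exactly the one that preserves the $L^2$-norm, the two claims are essentially a restatement of this homogeneity.

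For (1), I would write $|v|_2^2 = \int_{\R^N} s^N |u(sx)|^2\,dx$ and substitute $\xi = sx$, so that $dx = s^{-N}\,d\xi$ and the factor $s^N$ cancels with the Jacobian, giving $|v|_2^2 = |u|_2^2$.

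For (2), the computation splits into the polyharmonic part and the potential part. By iterating the chain rule (according to whether $m$ is even or odd in the definition \eqref{def:nabla-m}), one has $\nabla^m v(x) = s^{N/2 + m}\,(\nabla^m u)(sx)$; squaring, integrating and performing the substitution $\xi = sx$ leaves exactly $\int_{\R^N}|\nabla^m v|^2\,dx = s^{2m}\int_{\R^N}|\nabla^m u|^2\,d\xi$. For the singular-potential term, writing $x = (y,z) \in \R^K \times \R^{N-K}$ and setting $\xi = sx$, $\eta = sy$, one has $|\eta|^{-2m} = s^{-2m}|y|^{-2m}$, so that
\[
\int_{\R^N} \frac{v^2(x)}{|y|^{2m}}\,dx
= s^N \int_{\R^N} \frac{u^2(sx)}{|y|^{2m}}\,dx
= s^N \cdot s^{2m} \cdot s^{-N} \int_{\R^N} \frac{u^2(\xi)}{|\eta|^{2m}}\,d\eta\,dz'
= s^{2m}\int_{\R^N}\frac{u^2}{|y|^{2m}}\,d\xi.
\]
Summing the two contributions yields $[v]_\mu^2 = s^{2m}[u]_\mu^2$, which is (2).

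There is no real obstacle here; the only mild point of care is to keep in mind that the symbol $y$ in the denominator refers to the first $K$ components of the integration variable (not a fixed global object), so that it scales together with the substitution. The lemma is flagged as ``easy to check'' precisely because it amounts to the dimensional scaling of a homogeneous operator.
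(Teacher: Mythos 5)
Your verification is correct: the $s^{N/2}$ prefactor preserves the $L^2$-norm, and both the polyharmonic term and the inverse-square potential pick up exactly $s^{2m}$ under $\xi = sx$, which is precisely the routine computation the paper leaves implicit when it states the lemma as ``easy to check'' without proof. No discrepancies with the intended argument.
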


Let $u\in X_{\cG(K)}^m \setminus\{0\}$ and define 
\begin{equation}\label{vp_u}
\vp(s) := \vp_u(s) :=J(s^{\frac{N}{2}} u(s \cdot)),\quad s\in (0,+\infty).
\end{equation} 

\begin{Lem}\label{lem:phi}
For every $u \in X_{\cG(K)}^m$ such that
\begin{equation*}
\eta\frac{N}{2m} < \frac{[u]_\mu^2}{|u|_{2_{*}}^{2_{*}}}
\end{equation*}
there exists $s_0 \in (0,+\infty)$ such that $\vp(s_0) = \max\vp$, $s_0^{\frac{N}{2}} u(s_0 \cdot)\in\cM$, and $\vp(s_0) > \vp(s)$ for every $s\in (0,+\infty) \setminus \{s_0\}$. Moreover, $s_0=1$ if and only if $u \in \cM$.
\end{Lem}
\begin{proof}
Fix $u \in X_{\cG(K)}^m\setminus\{0\}$. Observe that, from Lemma \ref{lem:scaling}, (A1), and (A5),
$$
\varphi(s) = \frac{s^{2m}}{2}  [u]_\mu^2  - \int_{\R^N} \frac{G(s^{N/2}u)}{s^N} \, dx  \to 0
$$
as $s \to 0^+$. Moreover, from (A2) there follows that
$$
\frac{\varphi(s)}{s^{2m}} = \frac12  [u]_\mu^2 - \int_{\R^N} \frac{G(s^{N/2}u)}{ \left(s^{N/2} \right)^{2_{*}} } \, dx \to -\infty
$$
as $s \to +\infty$. In addition, if $N > 2m$, from (A1), (A3), and (A5), for every $\eps>0$ there exists $c_\eps>0$ such that
\[
G(u) \le \frac{N}{4m} H(u) \le \frac{N}{4m} (\eps+\eta)|u|^{2_{*}}+c_\eps|u|^{2^*},
\]
therefore,
\[
\vp(s) \geq s^{2m} \left(\frac12 [u]_\mu^2 - \frac{N}{4m}(\eta + \eps) |u|_{2_{*}}^{2_{*}}\right) - c_\eps s^{2^* m} |u|_{2^*}^{2^*} > 0
\]
for sufficiently small $\eps$ and $s$. If $N = 2m$, from Lemma \ref{lem:N=2s} and arguing as above, we obtain
\[
G(u) \le \frac{1}{2} H(u) \le \frac{1}{2} (\eps+\eta)|u|^{2_{*}} + c_\eps |u|^\sigma\left(\exp(\alpha u^2) - 1\right),
\]
where $\sigma > 4$ is fixed and $\displaystyle \alpha \in \left(0, \frac{N(2\pi)^N}{\|u\|_{H^{N/2}}^2 \omega_{N-1} }\right)$. Therefore, taking $\displaystyle t := \frac{N(2\pi)^N}{\alpha \|u\|_{H^{N/2}}^2 \omega_{N-1} }$ and $s \le 1$,
\begin{align*}
\vp(s) & \ge s^{2m} \left(\frac12 [u]_\mu^2 - (\eta + \eps) |u|_{2_{*}}^{2_{*}}\right) - c_\eps s^{m(\sigma-2)} \int_{\R^{2m}} |u|^\sigma \left(\exp(s^{2m} \alpha u^2) - 1\right) \, dx\\
& \ge s^{2m} \left(\frac12 [u]_\mu^2 - (\eta + \eps) |u|_{2_{*}}^{2_{*}}\right) - c_\eps s^{m(\sigma-2)} |u|_\frac{\sigma t}{t-1}^\sigma > 0
\end{align*}
for sufficiently small $\eps$ and $s$. Hence $\varphi$ has a maximum at some $s_0 > 0$. In particular $\varphi'(s_0) = 0$, so that
$$0=\vp'(s_0)= ms_0^{2m-1}  \left( [u]_\mu^2 -\frac{N}{2m}\int_{\R^N}H(s_0^{N/2} u)(s_0^{N/2})^{-2_{*}}\,dx\right)$$
and 
$$[v]_\mu^2 = \frac{N}{2m} \int_{\R^N} H\left( v \right) \, dx,$$ 
where $v := s_0^{\frac{N}{2}} u(s_0 \cdot)$. Hence $s_0^{\frac{N}{2}} u(s_0 \cdot) \in \cM$. From (A4) there follows that the function
$$
(0, +\infty) \ni s \mapsto \int_{\R^N} H(s^{N/2} u)(s^{N/2})^{-2_{*}} \, dx \in \R
$$
is increasing, hence $s_0$ is unique. If, in addition, $u \in \cM$, then
$$
[u]_\mu^2 = \frac{N}{2m} \int_{\R^N} H(u) \, dx,
$$
so $\varphi'(1) = 0$ and $s_0=1$.
\end{proof}

\begin{Lem}\label{lemCoercive}
$J$ is coercive on $\cD \cap \cM$.
\end{Lem}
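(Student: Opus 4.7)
The plan is to proceed by contradiction: assume there exists a sequence $(u_n)\subset\cD\cap\cM$ with $\|u_n\|_\mu\to\infty$ while $J(u_n)\le M$ for some constant $M$. Since $u_n\in\cD$ forces $|u_n|_2\le\sqrt{\rho}$, this means $[u_n]_\mu\to\infty$. The first observation I would make is that on $\cM$ one can substitute the defining identity $[u]_\mu^2=\frac{N}{2m}\int_{\R^N}H(u)\,dx$ back into the functional to obtain the convenient representation $J(u)=\int_{\R^N}\Phi(u)\,dx$ with $\Phi(s):=\frac{N}{4m}H(s)-G(s)\ge 0$ by (A5). In particular $J\ge 0$ on $\cM$.

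Next I would extract an a priori lower bound on $|u_n|_{2^*}^{2^*}$ from the strict condition \eqref{eq:Hstrict}. Combining (A0)--(A3) and (A5) (with Lemma \ref{lem:N=2s} supplying the exponential analogue when $N=2m$) and the Gagliardo--Nirenberg inequality \eqref{eq:GN} applied to $|u|_{2_{*}}^{2_{*}}$ on $\cD$ gives, essentially as in the proof of Lemma \ref{lem1},
\[
\Bigl(\tfrac{2m}{N}-(\eta+\eps)C_{*}^{2_{*}}\rho^{2m/N}\tau^{-1}\Bigr)[u_n]_\mu^2\le c_\eps|u_n|_{2^*}^{2^*}.
\]
By \eqref{eq:Hstrict} the coefficient on the left is strictly positive for $\eps$ small, whence $|u_n|_{2^*}^{2^*}\gtrsim[u_n]_\mu^2\to\infty$.

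Then I would rescale in the mass-preserving way of Lemma \ref{lem:scaling}: set $s_n:=[u_n]_\mu^{-1/m}\to 0$ and $v_n:=s_n^{N/2}u_n(s_n\,\cdot)$, so that $|v_n|_2=|u_n|_2$ and $[v_n]_\mu=1$, making $(v_n)$ bounded in $X^m_{\cG(K)}$. Since $u_n\in\cM$, Lemma \ref{lem:phi} gives $J(v_n)=\vp_{u_n}(s_n)\le\vp_{u_n}(1)=J(u_n)\le M$. Writing $\xi_n(y):=s_n^{-N/2}v_n(y)=u_n(s_n y)$, the $\cM$-identity $\int H(u_n)\,dx=\frac{2m}{N}[u_n]_\mu^2$ rescales to
\[
\int_{\R^N}|v_n(y)|^{2_{*}}\,\frac{H(\xi_n(y))}{|\xi_n(y)|^{2_{*}}}\,dy=\frac{2m}{N}.
\]
I would then apply the $\cG(K)$-symmetric Lions lemma (Lemma \ref{lem:Lions}) to $(v_n)$. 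In the non-vanishing branch, after a translation in $\R^{N-K}$ and extraction, $v_n\weakto v_0\ne 0$ a.e.; on $\{v_0\ne 0\}$ one has $|\xi_n|\to\infty$, so $H(\xi_n)/|\xi_n|^{2_{*}}\to+\infty$ by (A2) together with (A5), and Fatou in the displayed identity produces $+\infty\le\frac{2m}{N}$, the desired contradiction.

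The main obstacle is ruling out the vanishing branch. There $|v_n|_p\to 0$ for $p\in(2,2^*)$ gives only $\int G(v_n)\to 0$ and hence $J(v_n)\to\tfrac{1}{2}$, which is still consistent with $J(u_n)\le M$ whenever $M\ge \tfrac{1}{2}$. To close this case I would split the scale-invariant identity at $\{|\xi_n|\le R\}\cup\{|\xi_n|>R\}$: the first piece is dominated by $C_R|v_n|_{2_{*}}^{2_{*}}\to 0$, so the second must absorb the full $\tfrac{2m}{N}$. Estimating it via (A3) together with the scaling identity $|v_n|_{2^*}^{2^*}=|u_n|_{2^*}^{2^*}/[u_n]_\mu^{2^*}$ forces $|u_n|_{2^*}^{2^*}/[u_n]_\mu^2\to\infty$ arbitrarily fast. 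Feeding this quantitative concentration back into $J(u_n)=\int\Phi(u_n)\,dx$ via (A2) and the strict forms of (A4)--(A5)---which make $\Phi$ outgrow the mass-critical part of $H$ on the range of $u_n$---then yields $J(u_n)\to+\infty$, contradicting the assumption.
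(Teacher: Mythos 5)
Your setup, the rescaling $v_n=s_n^{N/2}u_n(s_n\cdot)$ with $s_n=[u_n]_\mu^{-1/m}$, and the non-vanishing branch are essentially sound: rescaling the constraint identity to $\int_{\R^N}|v_n|^{2_{*}}H(\xi_n)/|\xi_n|^{2_{*}}\,dx=\frac{2m}{N}$ and applying Fatou with (A2) and (A5) on $\{v_0\neq0\}$ is a valid variant of the paper's argument, which instead reaches the contradiction through $0\le J(u_n)/[u_n]_\mu^2\to-\infty$ using $G$; both work (note that translations are only in the $z$-variable, so the $\mu/|y|^{2m}$ term causes no trouble).

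The genuine gap is the vanishing branch. Your splitting at $\{|\xi_n|\le R\}$ does show (for $N>2m$) that $\int_{\{|u_n|>R\}}H(u_n)\,dx$ carries almost all of $\frac{2m}{N}[u_n]_\mu^2$, hence $|u_n|_{2^*}^{2^*}/[u_n]_\mu^2\to+\infty$; but this is not a contradiction (Sobolev only gives $|u_n|_{2^*}^{2^*}\lesssim[u_n]_\mu^{2^*}$, which is compatible), and the final step --- feeding this back into $J(u_n)=\int_{\R^N}\Phi(u_n)\,dx$, $\Phi:=\frac{N}{4m}H-G$, ``via (A2) and the strict forms of (A4)--(A5)'' to get $J(u_n)\to+\infty$ --- has no mechanism behind it. The assumptions give only $\Phi\ge0$ pointwise, with strictness encoded by $\preceq$ only near $s=0$ and only in an integral sense; there is no quantitative lower bound on $\Phi$ at infinity, so concentration of $|u_n|$ at large values does not force $\int_{\R^N}\Phi(u_n)\,dx$ to blow up. The paper closes this case differently, using the fibering map of Lemma \ref{lem:phi}: since $u_n\in\cM$ maximizes $\vp_{u_n}$ at $s=1$, for every fixed $s>0$ one has $J(u_n)\ge J\bigl(s^{N/2}v_n(s\cdot)\bigr)=\frac{s^{2m}}{2}-s^{-N}\int_{\R^N}G\bigl(s^{N/2}v_n\bigr)\,dx$, and vanishing together with Lemma \ref{lem:Lions} sends the last integral to $0$, so $\liminf_n J(u_n)\ge\frac{s^{2m}}{2}$ for all $s>0$, contradicting the boundedness of $J(u_n)$. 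You need this (or a genuine substitute) to finish. Two smaller points: your use of $2^*$ and (A3) in both the preliminary estimate from \eqref{eq:Hstrict} and the vanishing discussion is available only when $N>2m$, so the case $N=2m$ would require the Moser--Trudinger-type estimates of Lemma \ref{lem:N=2s}, which you do not address; and the case $N=K$, where no translations are available, should be mentioned, though it is easily handled via the compact embedding.
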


\begin{proof}
Observe that for $u \in \cD \cap \cM$, taking (A5) into account, we have
$$
J(u) = J(u) - \frac12 M(u) = \frac{N}{4m} \int_{\R^N} H(u) - \frac{4m}{N} G(u) \, dx \geq 0.
$$
Hence $J$ is bounded below on $\cD \cap \cM$.
Now we follow arguments similar to \cite[Lemma 2.3]{BM}, cf. \cite[Lemma 2.5]{JeanjeanLuNorm}.
Suppose that $(u_n) \subset\cD \cap \cM$ is a sequence such that $\|u_n\| \to +\infty$ and $J(u_n)$ is bounded from above. Since $(u_n) \subset \cD$ we see that $[u_n]_\mu^2 \to +\infty$. Set
$$
s_n := \frac{1}{[u_n]_\mu^{1/m}} > 0,\; \hbox{and }
v_n := s_n^{N/2} u_n \left( s_n \cdot \right).
$$
Note that $s_n \to 0^+$ as $n \to +\infty$. Then 
$$
\int_{\R^N} |v_n|^2 \, dx = \int_{\R^N} |u_n|^2 \, dx \leq \rho,
$$
so that $v_n \in \cD$. Moreover,
$$
[v_n]_\mu^2 = s_n^{2m} [u_n]_\mu^2 = s_n^{2m} s_n^{-2m} = 1.
$$
In particular, $(v_n)$ is bounded in $X_{\cG(K)}^m$. We begin by considering the case $N > K$. Suppose that 
$$
\limsup_{n \to +\infty} \left( \sup_{(0,z) \in \R^K \times \R^{N-K}} \int_{B((0,z),1)} |v_n|^2 \, dx \right) > 0.
$$
Then, up to a subsequence, we can find translations $(0,z_n) \subset \R^K \times \R^{N-K}$ such that
$$
v_n (\cdot + (0,z_n)) \weakto v \neq 0 \quad \hbox{in } X_{\cG(K)}^m
$$
and $v_n(x+(0,z_n)) \to v(x)$ for a.e. $x \in \R^N$. Then by (A2)
\begin{align*}
0 \leq \frac{J(u_n)}{[u_n]_\mu^2} &= \frac12 - \int_{\R^N} \frac{G(u_n)}{[u_n]_\mu^2} \, dx = \frac12 - s_n^{N} s_n^{2m} \int_{\R^N} G(u_n(s_n x)) \, dx \\
&= \frac12 - s_n^{N+2m} \int_{\R^N} G( s_n^{-N/2} v_n ) \, dx = \frac12 - s_n^{N+2m} \int_{\R^N} \frac{G(s_n^{-N/2} v_n)}{ \left| s_n^{-N/2} v_n \right|^{2_{*}} } \left| s_n^{-N/2} v_n \right|^{2_{*}} \, dx \\
&= \frac12 -  \int_{\R^N} \frac{G(s_n^{-N/2} v_n)}{ \left| s_n^{-N/2} v_n \right|^{2_{*}} } \left|  v_n \right|^{2_{*}} \, dx \\
&= \frac12 -  \int_{\R^N} \frac{G(s_n^{-N/2} v_n(x+(0,z_n)))}{ \left| s_n^{-N/2} v_n(x+(0,z_n)) \right|^{2_{*}} } \left|  v_n(x+(0,z_n)) \right|^{2_{*}} \, dx  \to -\infty
\end{align*}
and we obtain a contradiction. Hence we may assume that
$$
\sup_{(0,z) \in \R^K \times \R^{N-K}} \int_{B((0,z),1)} |v_n|^2 \, dx \to 0
$$
and from Lemma \ref{lem:Lions} $v_n \to 0$ in $L^{2_{*}} (\R^N)$. Observe that 
$$
u_n = s_n^{-N/2} v_n \left( \frac{\cdot}{s_n} \right) \in \cM.
$$
Hence, from Lemma \ref{lem:phi}, for any $s >0$ there holds
\begin{align*}
J(u_n) = J \left( s_n^{-N/2} v_n \left( \frac{\cdot}{s_n} \right) \right) \geq J \left( s^{N/2} v_n (s \cdot) \right) = \frac{s^{2m}}{2} - s^{-N} \int_{\R^N} G \left( s^{N/2} v_n \right) \, dx
\end{align*}
and $s^{-N} \int_{\R^N} G \left( s^{N/2} v_n \right) \, dx \to 0$ as $n \to +\infty$. Thus we obtain a contradiction by taking sufficiently large $s > 0$. If $N = K$, then from the compact embedding $X_{\cG(K)}^m \hookrightarrow L^{2_{*}}(\R^N)$ we have $v_n \to v$ in $L^{2_{*}}(\R^N)$ and a.e. in $\R^N$ (along a subsequence). If $v \ne 0$, then arguing as above
\[
0 \leq \frac{J(u_n)}{[u_n]_\mu^2} = \frac12 -  \int_{\R^N} \frac{G(s_n^{-N/2} v_n)}{ \left| s_n^{-N/2} v_n \right|^{2_{*}} } \left|  v_n \right|^{2_{*}} \, dx \to -\infty,
\]
thus $v = 0$ and we conclude as before.
\end{proof}

\begin{Lem}\label{lem:c-positive} There holds
	$$
	c:=\inf_{\cD \cap \cM} J > 0.
	$$
\end{Lem}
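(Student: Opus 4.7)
The plan is to show $c > 0$ by producing a positive lower bound on $J$ that is uniform over $\cD \cap \cM$. Since $J \geq 0$ on $\cD \cap \cM$ was already observed at the beginning of the proof of Lemma \ref{lemCoercive}, we have $c \geq 0$, and the task is to rule out $c = 0$.

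First I would verify that every $u \in \cD \cap \cM$ satisfies the hypothesis of Lemma \ref{lem:phi}. The Gagliardo--Nirenberg inequality \eqref{eq:GN}, combined with $|u|_2^2 \leq \rho$, yields $|u|_{2_*}^{2_*} \leq C_*^{2_*}\rho^{2m/N}\tau^{-1}[u]_\mu^2$, so the condition $\tfrac{N}{2m}\eta < [u]_\mu^2/|u|_{2_*}^{2_*}$ is precisely the standing assumption \eqref{eq:Hstrict}. Lemma \ref{lem:phi} then gives $J(u) = \vp_u(1) \geq \vp_u(s)$ for every $s > 0$, and the task reduces to finding, for each $u \in \cD \cap \cM$, a scaling $s = s(u) > 0$ making $\vp_u(s)$ bounded below by a positive constant independent of $u$.

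Assuming first $N > 2m$, (A1), (A3), and the rightmost inequality in (A5) provide, for every $\eps > 0$, a constant $c_\eps$ with $G(t) \leq \tfrac{N}{4m}(\eta + \eps)|t|^{2_*} + c_\eps|t|^{2^*}$. Changing variables in $\vp_u(s) = \tfrac{s^{2m}}{2}[u]_\mu^2 - s^{-N}\int G(s^{N/2}u)\,dx$, applying the bound on $|u|_{2_*}^{2_*}$ above and the Sobolev embedding $|u|_{2^*}^{2^*} \lesssim [u]_\mu^{2^*}$, one obtains
\begin{equation*}
\vp_u(s) \geq s^{2m}\left(\tfrac{1}{2} - \tfrac{N}{4m}(\eta + \eps)C_*^{2_*}\rho^{2m/N}\tau^{-1}\right)[u]_\mu^2 - B_\eps\, s^{2mN/(N-2m)}[u]_\mu^{2^*}.
\end{equation*}
By \eqref{eq:Hstrict} I fix $\eps$ so small that the coefficient $A$ of the first term is strictly positive. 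I then choose $s = \kappa/[u]_\mu^{1/m}$, which is admissible because Lemma \ref{lem1} gives $[u]_\mu \geq \delta_0 > 0$. Since $2^* = 2N/(N-2m)$, the lower bound collapses to $A\kappa^{2m} - B_\eps\kappa^{2mN/(N-2m)}$, independent of $u$, and is positive for all sufficiently small $\kappa > 0$; this yields $J(u) \geq c_0 > 0$ uniformly in $u \in \cD \cap \cM$, hence $c \geq c_0 > 0$.

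The main obstacle is the borderline dimension $N = 2m$, where the Sobolev embedding into $L^{2^*}$ is unavailable and the term $|u|_{2^*}^{2^*}$ must be replaced by the Moser--Trudinger substitute of Lemma \ref{lem:N=2s}, whose applicability requires $\alpha\|u\|_{H^{N/2}}^2 < N(2\pi)^N/\omega_{N-1}$. I would handle this by splitting $\cD \cap \cM$ into the region where $\|u\|_{H^m} \geq R$, on which coercivity (Lemma \ref{lemCoercive}) already forces $J(u) \geq 1$ for $R$ sufficiently large, and the complementary region, on which $\alpha$ and the exponent $t$ can be fixed uniformly so that the same scaling argument with $s = \kappa/[u]_\mu^{1/m}$ delivers the desired uniform positive lower bound.
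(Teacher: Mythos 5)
Your proposal is correct and follows essentially the same route as the paper: use the maximality property of Lemma \ref{lem:phi} to rescale each $u\in\cD\cap\cM$ to a fixed small $[\,\cdot\,]_\mu$-level (your $s=\kappa/[u]_\mu^{1/m}$ versus the paper's $s=\delta^{1/m}/[u]_\mu^{1/m}$), and there bound $\int_{\R^N}G\,dx$ via (A1), (A3), (A5), Gagliardo--Nirenberg with $|u|_2^2\le\rho$, and Sobolev (resp.\ Lemma \ref{lem:N=2s}) under \eqref{eq:Hstrict}. The only deviation is your splitting in the case $N=2m$, which is workable but unnecessary: applying Lemma \ref{lem:N=2s} to the rescaled function $v$, whose norm satisfies $\|v\|_{H^{N/2}}^2\le\tau^{-1}[v]_\mu^2+\rho\le\tau^{-1}+\rho$ for $\kappa\le1$, lets you fix $\alpha$ and $t$ uniformly on all of $\cD\cap\cM$ without invoking Lemma \ref{lemCoercive}, which is exactly what the paper does.
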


\begin{proof}
We will show that for $\rho > 0$ satisfying \eqref{eq:Hstrict} there is $\delta > 0$ such that 
\begin{equation}\label{eq:JJ}
\frac{\beta}{2} [u]_\mu^2  \leq J(u)
\end{equation}
for $u \in \cD \cap X_{\cG(K)}^m$ such that $[u]_\mu \leq \delta$, where
\[
\beta = \frac12 - \frac{N}{4m} \eta C_{*}^{2_{*}} \tau^{-1} \rho^{\frac{2}{N}} > 0.
\]
If $N > 2m$, then from \eqref{eq:GN} and (A5) we obtain
\begin{align*}
\int_{\R^N} G(u) \, dx & \le \frac{N}{4m} \int_{\R^N} H(u) \, dx \leq \frac{N}{4m} (\eps + \eta) |u|_{2_{*}}^{2_{*}} + c_\eps |u|_{2^*}^{2^*}\\
& \leq \frac{N}{4m}  (\eps+\eta) C_{*}^{2_{*}}  \rho^{\frac{2m}{N}} [u]^2 + c_\eps [u]^{2^*} \leq \frac{N}{4m} (\eps+\eta) C_{*}^{2_{*}} \tau^{-1} \rho^{\frac{2m}{N}} [u]_\mu^2 + c_\eps [u]_\mu^{2^*} \\
&= \left( \eps C_1 + c_\eps [u]_\mu^{\frac{4m}{N-2m}} + \frac{N}{4m} \eta C_{*}^{2_{*}}  \tau^{-1} \rho^{\frac{2m}{N}} \right) [u]_\mu^2 \\
&= \left( \eps C_1 + c_\eps [u]_\mu^{\frac{4m}{N-2m}} + \frac{1}{2} - \beta \right) [u]_\mu^2
\end{align*}
and we take
$$
\eps := \frac{\beta}{4 C_1} > 0, \quad \delta := \left( \frac{\beta}{4 c_\eps} \right)^{\frac{N-2m}{4m }} > 0.
$$
If $N = 2m$, we can assume $[u]_\mu \le 1$. Then arguing similarly we get
\begin{align*}
\int_{\R^{2m}} G(u) \, dx & \le \frac{1}{2} (\eps + \eta) |u|_{2_{*}}^{2_{*}} + c_\eps \int_{\R^{2m}} u^4 \left(\exp(\alpha u^2) - 1\right) \, dx\\
& \le \left(\eps C_1 + \frac12 - \beta\right)[u]_\mu^2 + c_\eps |u|_\frac{4 t}{t-1}^2 \le \left(\eps C_1 + \frac12 - \beta\right)[u]_\mu^2 + c_\eps [u]_\mu^{2+2/t},
\end{align*}
where $\displaystyle \alpha \in \left(0,\frac{N(2\pi)^N}{(\tau^{-1}+\rho) \omega_{N-1}}\right)$ and $\displaystyle t = \frac{N(2\pi)^N}{\alpha (\tau^{-1}+\rho) \omega_{N-1} }$, and we take $\eps$ as before and
\[
\delta := \min\left\{1,\left( \frac{\beta}{4 c_\eps} \right)^{t/2}\right\}.
\]
We obtain that
$$
\int_{\R^N} G(u) \, dx \leq \left( \frac{\beta}{4} + \frac{\beta}{4} + \frac12 - \beta \right) [u]_\mu^2 = \left( \frac12 - \frac{\beta}{2} \right) [u]_\mu^2.
$$
Hence
$$
J(u) = \frac12 [u]_\mu^2 - \int_{\R^N} G(u) \, dx \geq \frac12 [u]_\mu^2 - \left( \frac12 - \frac{\beta}{2} \right) [u]_\mu^2 = \frac{\beta}{2} [u]_\mu^2.
$$
Fix $u \in \cD \cap \cM$. Then, from Lemma \ref{lem:phi}, for every $s > 0$ there holds
$$
J(u) \geq J(s^{N/2} u(s \cdot)).
$$
Choose $s := \frac{\delta^{1/m}}{[u]_\mu^{1/m}} > 0$, where $\delta > 0$ is chosen so that \eqref{eq:JJ} holds, and let $v :=  s^{N/2} u(s \cdot)$. Obviously $|v|_2 = |u|_2$ so that $v \in \cD$. Moreover, $[v]_\mu = \delta$. Then
\[
J(u) \geq J(v) \geq \frac{\beta}{2} [v]_\mu^2 = \frac{\beta}{2} \delta^2 >0.\qedhere
\]
\end{proof}

Before we show that $\inf_{\cD \cap \cM} J$ is attained, inspired by G\'erard \cite{Gerard} we prove the following profile decomposition result applied to $H$ satisfying \eqref{eq:L1}--\eqref{eq:L2}.

\begin{Th}\label{ThGerard}
Suppose that $(u_n)\subset X_{\cG(K)}^m$ is bounded. If  $N>K$, then there are sequences 	$(\tu_i)_{i=0}^{+\infty}\subset X_{\cG(K)}^m$, $(z_n^i)_{i=0}^{+\infty}\subset \R^{N-K}$ for any $n\geq 1$, such that $z_n^0=0$, 	$|z_n^i-z_n^j|\rightarrow +\infty$ as $n\to+\infty$ for $i\neq j$, and passing to a subsequence, the following conditions hold for any $i\geq 0$:
\begin{eqnarray}
\nonumber
&& u_n(\cdot+(0,z_n^i))\weakto \tu_i\; \hbox{ in } X_{\cG(K)}^m \text{ as }n\to+\infty,\\
\label{EqSplit2a}
&& \lim_{n\to+\infty} [u_n]_\mu^2 = \sum_{j=0}^i [ \tu_j ]_\mu^2 +\lim_{n\to+\infty} [ v_n^i ]_\mu^2,
\end{eqnarray}
where $v_n^i:=u_n-\sum_{j=0}^i\tu_j(\cdot-(0,z_n^j))$, and
\begin{eqnarray}
&& \limsup_{n\to+\infty}\int_{\R^N}H(u_n)\, dx= \sum_{j=0}^{+\infty} \int_{\R^N}H(\tu_j)\, dx.	\label{EqSplit3a}
\end{eqnarray}
If $N=K$, then there is $\tu_0\in  X_{\cG(K)}^m$ such that $u_n\weakto \tu_0$ in $X_{\cG(K)}^m$ and \eqref{EqSplit2a}--\eqref{EqSplit3a} hold with $\tu_i=0$ for $i\geq 1$.
\end{Th}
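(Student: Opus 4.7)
The plan is to follow the classical concentration-compactness / profile decomposition argument of G\'erard \cite{Gerard}, adapted to the $\cG(K)$-symmetric setting in which the only translations preserving the symmetry act on the $z$-component $\R^{N-K}$. A key simplification is that such admissible translations leave the potential term $\int_{\R^N} \mu u^2/|y|^{2m}\,dx$ invariant, so the full norm $\|\cdot\|_\mu$ splits cleanly under weak limits, just as the standard Sobolev norm does.

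First I would dispose of the case $N=K$. By the compact embedding $X_{\cG(N)}^m = H^m_{\cG(N)}(\R^N) \hookrightarrow L^p(\R^N)$ for every $p\in(2,2^*)$ (and its Moser--Trudinger analogue \eqref{eq:MT} together with Lemma \ref{lem:N=2s} in the critical dimension $N=2m$), pass to a subsequence so that $u_n \weakto \tu_0$ in $X_{\cG(K)}^m$ and $u_n \to \tu_0$ a.e.\ and in the relevant $L^p$. Weak convergence in the Hilbert space yields \eqref{EqSplit2a}, while the growth conditions \eqref{eq:L1}--\eqref{eq:L2} satisfied by $H$, combined with strong $L^p$ convergence (and a Moser--Trudinger interpolation à la Lemma \ref{lem:N=2s} when $N=2m$), produce $\int_{\R^N} H(u_n)\,dx \to \int_{\R^N} H(\tu_0)\,dx$, settling \eqref{EqSplit3a} with $\tu_j=0$ for $j\geq 1$.

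For $N>K$, I would extract profiles iteratively. Set $z_n^0:=0$ and let $\tu_0$ be the weak limit of $u_n$ along a subsequence. Having constructed $\tu_0,\ldots,\tu_i$ and $z_n^0,\ldots,z_n^i$, define $v_n^i := u_n - \sum_{j=0}^i \tu_j(\cdot-(0,z_n^j))$. If the vanishing condition of Lemma \ref{lem:Lions} holds for $v_n^i$, terminate by setting $\tu_j:=0$ for $j>i$; otherwise extract translations $(0,z_n^{i+1})$ along which $v_n^i(\cdot+(0,z_n^{i+1}))\weakto \tu_{i+1}\neq 0$. The divergence $|z_n^{i+1}-z_n^j|\to+\infty$ for each $j\leq i$ is forced: otherwise, up to a further subsequence, $z_n^{i+1}-z_n^j$ would be bounded, and then the weak limit extracted at an earlier step would have absorbed $\tu_{i+1}$, a contradiction. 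Weak convergence together with the asymptotic orthogonality of the shifted profiles then gives \eqref{EqSplit2a}; crucially, the singular-potential contribution splits because $z$-translations leave $|y|$ unchanged. Since the partial sums $\sum_{j=0}^i[\tu_j]_\mu^2$ are bounded by $\liminf_{n\to+\infty} [u_n]_\mu^2 < \infty$, the profile norms tend to $0$, and a standard diagonal extraction yields subsequences that work simultaneously for every $i$.

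The main obstacle and final step is \eqref{EqSplit3a}. For fixed $i$, an iterated Brezis--Lieb argument, combined with the growth conditions \eqref{eq:local1}--\eqref{eq:local2} inherited by $H$ via (A0), yields
\[
\limsup_{n\to+\infty}\int_{\R^N} H(u_n)\,dx = \sum_{j=0}^{i}\int_{\R^N} H(\tu_j)\,dx + \limsup_{n\to+\infty}\int_{\R^N} H(v_n^i)\,dx.
\]
The delicate point is to show that the remainder integral vanishes as $i\to+\infty$ uniformly in $n$. This will rely on the uniform $X_{\cG(K)}^m$-bound on $(v_n^i)$, the vanishing of $v_n^i$ in the relevant $L^p$ (respectively Moser--Trudinger) norms as $i\to+\infty$ via Lemma \ref{lem:Lions}, and an interpolation absorbing the endpoint growth permitted by (A0). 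Letting $i\to+\infty$ then produces \eqref{EqSplit3a}.
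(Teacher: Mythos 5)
Your proposal is correct and follows essentially the same route as the paper: the paper treats the $N=K$ case and the $\mu=0$ machinery for $N>K$ by invoking \cite[Proof of Theorem 1.4]{NonradMed} with Lemma \ref{lem:Lions} in place of the classical Lions lemma, and then only verifies, by induction on $i$, the additional splitting of the Hardy term $\int_{\R^N}\mu u^2/|y|^{2m}\,dx$ along the profiles. Your key observation that this term splits because the admissible translations act only on the $z$-variable and leave $|y|$ invariant is exactly the content of that induction, so the two arguments coincide apart from your re-deriving in outline the profile-extraction and remainder estimates that the paper simply cites.
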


\begin{proof}
The proof of case $N=K$ follows directly from Lemma \ref{lem:Lions}.\\
If $N>K$, then when $\mu = 0$, we proceed as in \cite[Proof of Theorem 1.4]{NonradMed} using Lemma \ref{lem:Lions} instead of the classical version of Lions' lemma. Now, it is sufficient to check that for every $i\ge0$
$$
\lim_{n\to+\infty} \int_{\R^N} \frac{|u_n|^2}{|y|^{2m}} \, dx = \sum_{j=0}^i \int_{\R^N} \frac{|\tu_j|^2}{|y|^{2m}} \, dx + \lim_{n\to+\infty} \int_{\R^N} \frac{|v_n^i|^2}{|y|^{2m}} \, dx.
$$
We proceed by induction. If $i=0$, then since $u_n \weakto \tu_i$ in $X_{\cG(K)}^m$ we have
\begin{align*}
\int_{\R^N} \frac{|\tu_0|^2}{|y|^{2m}} \, dx + \lim_{n\to+\infty} \int_{\R^N} \frac{|u_n - \tu_0|^2}{|y|^{2m}} \, dx & = 2\int_{\R^N} \frac{|\tu_0|^2}{|y|^{2m}} \, dx + \lim_{n\to+\infty} \int_{\R^N} \frac{|u_n|^2 - 2 u_n \tu_0}{|y|^{2m}} \, dx\\
& = \lim_{n\to+\infty} \int_{\R^N} \frac{|u_n|^2}{|y|^{2m}} \, dx.
\end{align*}
Now assume that the claim holds for some $i\ge0$. To simplify notations, for $u \in X_{\cG(K)}^m$ and $z \in \R^{N-K}$ we will write $u(\cdot + z)$ instead of $u(\cdot + (0,z))$. Observe that for every $u \in X_{\cG(K)}^m$ and every $(z_n) \subset \R^{N-K}$ such that $|z_n| \to +\infty$ there holds $u(\cdot + z_n) \weakto 0$ in $X_{\cG(K)}^m$. Then
\begin{align*}
&\quad \sum_{j=0}^{i+1} \int_{\R^N} \frac{|\tu_j|^2}{|y|^{2m}} \, dx + \lim_{n\to+\infty} \int_{\R^N} \frac{|v_n^{i+1}|^2}{|y|^{2m}} \, dx \\
&= \sum_{j=0}^{i+1} \int_{\R^N} \frac{|\tu_j|^2}{|y|^{2m}} \, dx + \lim_{n\to+\infty} \int_{\R^N} \frac{|v_n^i - \tu_{i+1}(\cdot - z_n^{i+1})|^2}{|y|^{2m}} \, dx\\
&= \lim_{n\to+\infty} \int_{\R^N} \frac{|u_n|^2}{|y|^{2m}} \, dx + 2 \left(\int_{\R^N} \frac{|\tu_{i+1}|^2}{|y|^{2m}} \, dx - \lim_{n\to+\infty} \int_{\R^N} \frac{\tu_{i+1}(\cdot - z_n^{i+1})v_n^i}{|y|^{2m}} \, dx\right).
\end{align*}
In addition,
\[
\int_{\R^N} \frac{\tu_{i+1}(\cdot - z_n^{i+1})v_n^i}{|y|^{2m}} \, dx = \int_{\R^N} \frac{\tu_{i+1}(\cdot - z_n^{i+1}) u_n - \tu_{i+1}(\cdot - z_n^{i+1}) \sum_{j=0}^i \tu_j(\cdot - z_n^j)}{|y|^{2m}} \, dx,
\]
with
\[
\int_{\R^N} \frac{\tu_{i+1}(\cdot - z_n^{i+1}) u_n}{|y|^{2m}} \, dx = \int_{\R^N} \frac{\tu_{i+1} u_n(\cdot + z_n^{i+1})}{|y|^{2m}} \, dx \to \int_{\R^N} \frac{|\tu_{i+1}|^2}{|y|^{2m}} \, dx
\]
and
\[
\int_{\R^N} \frac{\tu_{i+1}(\cdot - z_n^{i+1}) \tu_j(\cdot - z_n^j)}{|y|^{2m}} \, dx = \int_{\R^N} \frac{\tu_{i+1}(\cdot + z_n^j - z_n^{i+1}) \tu_j}{|y|^{2m}} \, dx \to 0
\]
for every $j \in \{0,\dots,i\}$.
\end{proof}

\begin{Lem}\label{lem:c_0attained} $c=\inf_{\cD \cap \cM} J$ is attained. If, in addition, $g$ is odd and $m=1$, then $c$ is attained by a nonnegative function in $\cD \cap \cM$.
\end{Lem}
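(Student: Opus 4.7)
The plan is to take a minimizing sequence $(u_n)\subset\cD\cap\cM$ with $J(u_n)\to c$, apply Theorem \ref{ThGerard}, and use the identity $J(u)=\int_{\R^N}\tilde G(u)\,dx+\tfrac12 M(u)$ with $\tilde G:=\tfrac{N}{4m}H-G\geq 0$ (nonnegative by (A5)) together with the rescaling provided by Lemma \ref{lem:phi} to show that exactly one profile survives and, after a rescaling, realizes the infimum in $\cD\cap\cM$.

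First, by Lemma \ref{lemCoercive} the sequence is bounded in $X^m_{\cG(K)}$. The Lions-type mechanism behind Theorem \ref{ThGerard} in fact delivers the corresponding split of $\int_{\R^N}\Psi(u_n)\,dx$ for every continuous $\Psi$ satisfying \eqref{eq:L1}--\eqref{eq:L2}, which under (A0), (A1), (A3), (A5) includes $H$, $G$, and $\tilde G$. Since $u_n\in\cM$ and Lemma \ref{lem1} gives $[u_n]_\mu\geq\delta>0$, vanishing of all profiles is ruled out; after translating and relabeling I may assume $\tu_0\neq 0$ with $u_n\weakto\tu_0$.

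Second, combining $J(u_n)=\int_{\R^N}\tilde G(u_n)\,dx$ (which holds because $u_n\in\cM$) with the extended profile decomposition, and using \eqref{EqSplit2a} and the split of $\int H(u_n)\,dx$, I obtain the two global relations
\[
\sum_{j\geq 0}M(\tu_j)=-R\leq 0,\qquad c=\sum_{j\geq 0}\int_{\R^N}\tilde G(\tu_j)\,dx,
\]
where $R:=\lim_{i\to\infty}\lim_n[v_n^i]_\mu^2\geq 0$. Each nonzero $\tu_j$ satisfies $|\tu_j|_2\leq\sqrt\rho$, so the Gagliardo--Nirenberg inequality \eqref{eq:GN} combined with \eqref{eq:Hstrict} secures the hypothesis of Lemma \ref{lem:phi}, producing $v_j:=s_j^{N/2}\tu_j(s_j\cdot)\in\cD\cap\cM$ with $J(v_j)\geq c$. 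A direct computation using (A4) and (A5) shows that $s\mapsto\psi_j(s):=\int_{\R^N}\tilde G(s^{N/2}\tu_j(s\cdot))\,dx$ is nondecreasing, while the identity $\varphi_{\tu_j}'(1)=m M(\tu_j)$ places $s_j\leq 1$ exactly when $M(\tu_j)\leq 0$; hence $\int_{\R^N}\tilde G(\tu_j)\,dx=\psi_j(1)\geq\psi_j(s_j)=J(v_j)\geq c$ whenever $M(\tu_j)\leq 0$.

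Third, partition the nonzero profiles as $A:=\{j:M(\tu_j)\leq 0\}$ and $B:=\{j:M(\tu_j)>0\}$. The sign constraint $\sum_j M(\tu_j)\leq 0$ forces $A\neq\emptyset$, while the estimate $|A|c\leq\sum_{j\in A}\int\tilde G(\tu_j)\,dx\leq c$ forces $|A|=1$; relabeling, $A=\{0\}$ with $\int\tilde G(\tu_0)\,dx=c$, which forces $\int\tilde G(\tu_j)\,dx=0$ for every $j\in B$. The integral characterization of $\preceq$ applied to the inequality $0\preceq\frac{4m}{N}G\leq H$ from (A5) gives $\int\tilde G(u)\,dx>0$ for every nonzero $u\in X^m_{\cG(K)}$, so $\tu_j=0$ for $j\in B$; therefore $\tu_0$ is the only nonzero profile, and its rescaling $v_0\in\cD\cap\cM$ satisfies $J(v_0)=c$. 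When $g$ is odd and $m=1$, $|\nabla|v_0||^2=|\nabla v_0|^2$ a.e.\ and $G(|v_0|)=G(v_0)$, so $|v_0|\in\cD\cap\cM$ with $J(|v_0|)=c$, yielding a nonnegative minimizer. The main obstacle is this elimination cascade: simultaneously tracking the profile decompositions of $[\cdot]_\mu^2$, $\int H$, and $\int\tilde G$, and exploiting the geometric equivalence $M(\tu_j)\leq 0\Leftrightarrow s_j\leq 1$ together with the monotonicity of $\psi_j$.
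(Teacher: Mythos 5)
Your argument reaches the right conclusion and shares the paper's backbone (minimizing sequence, coercivity, Theorem \ref{ThGerard}, the quantity $\widetilde G:=\frac{N}{4m}H-G\ge0$), but the mechanism is genuinely different in two places. First, you invoke the profile splitting \eqref{EqSplit3a} not only for $H$ but also for $G$ (hence $\widetilde G$); this is an extension of Theorem \ref{ThGerard} as stated, and it also requires the splits for $H$ and $G$ to hold simultaneously along one subsequence with the same profiles (a diagonal extraction). It is consistent with the Lions-type proof, but the paper deliberately avoids it: having located one profile $\tu_i$ with $M(\tu_i)\le0$ via \eqref{EqSplit2a}--\eqref{EqSplit3a} (for $H$ only), it passes to the energy bound by Fatou's lemma applied to the nonnegative integrand $H-\frac{4m}{N}G$ along the a.e.\ convergent translates, together with the non-mass-preserving rescaling $\tu_i(r(\tu_i)\cdot)$ and the factor $r(\tu_i)^{-N}\le1$. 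Second, in place of Fatou you use the $L^2$-preserving dilation of Lemma \ref{lem:phi} together with a new observation, namely that $s\mapsto\int_{\R^N}\widetilde G\bigl(s^{N/2}\tu_j(s\cdot)\bigr)dx$ is nondecreasing; your computation is correct (the derivative has the sign of $\int h(w)w-2_{*}H(w)\,dx\ge0$ by (A4)), and combined with $s_j\le1$ when $M(\tu_j)\le0$ it yields $c\le J(v_j)\le\int\widetilde G(\tu_j)$, after which the counting argument with $c>0$ (Lemma \ref{lem:c-positive}) closes the proof. Your treatment of the nonnegative minimizer is also fine and in fact more direct than the paper's, since for real $H^1$ functions $|\nabla|v_0||=|\nabla v_0|$ a.e.\ gives $[|v_0|]_\mu=[v_0]_\mu$, so $|v_0|\in\cD\cap\cM$ with the same energy.

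One step, however, is genuinely wrong, although it is not needed for the statement: you claim $\int_{\R^N}\widetilde G(u)\,dx>0$ for every nonzero $u$ by applying the integral characterization of $\preceq$ to the middle inequality in (A5). But (A5) only assumes $\frac{4m}{N}G\le H$ there, not $\frac{4m}{N}G\preceq H$ (e.g.\ $g$ could be exactly mass-critical near $0$, making $\widetilde G\equiv0$ on a neighbourhood of the origin), so you cannot conclude $\tu_j=0$ for $j\in B$, and the assertion that $\tu_0$ is the only nonzero profile is unjustified. Fortunately your chain $c\le J(v_0)\le\int\widetilde G(\tu_0)\le c$ already gives the minimizer $v_0\in\cD\cap\cM$ without any information on the $B$-profiles, so you should simply delete that side claim (or strengthen the hypothesis if you want it).
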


\begin{proof}
Take any sequence $(u_n) \subset \cD \cap \cM$ such that $J(u_n) \to c$ and by Lemma \ref{lemCoercive}, $(u_n)$ is bounded in $X_{\cG(K)}^m$. Note that by (A1), (A3), and (A5), we may apply Theorem \ref{ThGerard} and we find a profile decomposition of $(u_n)$ satisfying \eqref{EqSplit2a} and \eqref{EqSplit3a}.
We show that
$$0< [\tu_i]_\mu^2 \leq \frac{N}{2m}\int_{\R^N} H(\tu_i) \, dx$$
for some $i\geq 0$. Let
$$I:=\{i\geq 0: \tu_i\neq 0\}.$$
In view of Lemma \ref{lem1} and \eqref{EqSplit3a}, $I\neq \emptyset$.
Suppose that  
$$
[\tu_i]_\mu^2 >\frac{N}{2m}\int_{\R^N} H(\tu_i) \, dx
$$
for all $i\in I$.
Then by \eqref{EqSplit2a} and \eqref{EqSplit3a}
\begin{eqnarray*}
\limsup_{n\to+\infty} \frac{N}{2m}\int_{\R^N}H(u_n)\,dx&=&\limsup_{n\to+\infty} [u_n]_\mu^2
\geq \sum_{j=0}^{+\infty} [\tu_j]_\mu^2 \\
&=& \sum_{j\in I} [\tu_j]_\mu^2
> \sum_{j=0}^{+\infty} \frac{N}{2m}\int_{\R^N} H(\tu_j) \, dx \\
&=& \limsup_{n\to+\infty} \frac{N}{2m}\int_{\R^N}H(u_n)\,dx,
\end{eqnarray*}
which is a contradiction. 
Therefore there is $i\in I$ such that $r(\tu_i)\geq 1$ defined as in \eqref{def:r} and
$\tu_i(r(u) \cdot) \in \cM$.  Moreover,
	$$\int_{\R^N}|\tu_i(r(\tu_i)\cdot)|^2\,dx=r(\tu_i)^{-N}\int_{\R^N}|\tu_i|^2\,dx\leq r(\tu_i)^{-N}\rho\leq \rho,$$
hence  $\tu_i(r(\tu_i)\cdot)\in\cD\cap\cM$.
Then passing to a subsequence $u_n(x+y_n^i)\to\tu_i(x)$ for a.e. $x\in\R^N$ and by Fatou's lemma and (A5)
	\begin{eqnarray*}
		\inf_{\cD\cap\cM}J
		&\leq& J(\tu_i(r(\tu_i)\cdot))
		=r(\tu_i)^{-N}\frac{N}{4m}\int_{\R^N}H(\tu_i)-\frac{4m}{N}G(\tu_i)\, dx\\
		&\le& \frac{N}{4m}\int_{\R^N}H(\tu_i)-\frac{4m}{N}G(\tu_i)\, dx\\
		&\leq& \liminf_{n\to+\infty} \frac{N}{4m}\int_{\R^N}H(u_n(\cdot+y_n^i))-\frac{4m}{N}G(u_n(\cdot+y_n^i))\, dx\\
		&=& \liminf_{n\to+\infty} J(u_n)=c=\inf_{\cD\cap\cM}J
	\end{eqnarray*}
Therefore
	$r(\tu_i)=1$, $\tu_i\in\cD\cap\cM$  and $J(\tu_i)=c$.

Suppose that $g$ is odd and $m = 1$. Then $G$ and $H$ are even, so that $G(|u|)=G(u)$ and $H(|u|)=H(u)$ for all $u \in X^1_{\cG(K)}$. We define
$
\tv_i := |\tu_i|
$.
Then $|\tv_i|_2 = |\tu_i|_2$, hence $\tv_i \in \cD$. Moreover,
$$
[\tv_i]_\mu^2 \leq [\tu_i]_\mu^2 = \frac{N}{2} \int_{\R^N} H(\tu_i) \, dx = \frac{N}{2} \int_{\R^N} H(\tv_i) \, dx,
$$
and that $r(\tv_i) \geq 1$, where $r$ is given by \eqref{def:r} and $\tv_i(r(\tv_i) \cdot) \in \cM$. Suppose that $r(\tv_i) > 1$. Then, by (G5)
\begin{align*}
\inf_{\cD \cap \cM} J &\leq J(\tv_i (r(\tv_i) \cdot)) = r(\tv_i)^{-N} \frac{N}{4} \int_{\R^N} H(\tv_i) - \frac{4}{N} G(\tv_i) \, dx \\
&< \frac{N}{4} \int_{\R^N} H(\tv_i) - \frac{4}{N} G(\tv_i) \, dx = \frac{N}{4} \int_{\R^N} H(\tu_i) - \frac{4}{N} G(\tu_i) \, dx = J(\tu_i) = \inf_{\cD \cap \cM} J,
\end{align*}
which is a contradiction. Hence $r(\tv_i) = 1$ and $\tv_i \in \cM$. Obviously $J(\tv_i) = \inf_{\cD \cap \cM} J$ and $\tv_i \geq 0$. 
\end{proof}

\begin{Lem}\label{lem:noM0}
$\left\{u \in \cM : \vp_u''(1) = 0\right\} = \emptyset$, where $\vp_u$ is given by \eqref{vp_u}.
\end{Lem}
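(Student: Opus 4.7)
The plan is to compute $\vp_u''(1)$ explicitly for an arbitrary $u\in\cM$ and show it is strictly negative, via assumption (A4) and the integral characterization of $\preceq$.

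First I would differentiate $\vp_u(s)=\frac{s^{2m}}{2}[u]_\mu^2-s^{-N}\int_{\R^N}G(s^{N/2}u)\,dx$ once, using the identity $H(w)=g(w)w-2G(w)$ to combine the emerging boundary/chain-rule terms, obtaining
\[
\vp_u'(s)=ms^{2m-1}[u]_\mu^2-\frac{N}{2}s^{-N-1}\int_{\R^N}H(s^{N/2}u)\,dx.
\]
Differentiating once more (with $h=H'$) yields
\[
\vp_u''(s)=m(2m-1)s^{2m-2}[u]_\mu^2+\frac{N(N+1)}{2}s^{-N-2}\int_{\R^N}H(s^{N/2}u)\,dx-\frac{N^2}{4}s^{-N-2}\int_{\R^N}h(s^{N/2}u)\,s^{N/2}u\,dx.
\]

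Next I would evaluate at $s=1$ and substitute the $\cM$-constraint $[u]_\mu^2=\frac{N}{2m}\int_{\R^N}H(u)\,dx$, so that $m(2m-1)[u]_\mu^2=\frac{N(2m-1)}{2}\int_{\R^N}H(u)\,dx$. Combining with the second term gives the coefficient $\frac{N(2m-1)}{2}+\frac{N(N+1)}{2}=\frac{N(N+2m)}{2}$, and therefore, recalling $2_*=2+\frac{4m}{N}=\frac{2(N+2m)}{N}$,
\[
\vp_u''(1)=\frac{N^2}{4}\left[2_*\int_{\R^N}H(u)\,dx-\int_{\R^N}h(u)u\,dx\right].
\]

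Finally, assumption (A4) asserts $2_*H(s)\preceq h(s)s$ on $\R\setminus\{0\}$, so by the integral characterization of $\preceq$ (stated just after (A0)--(A5) is set up, via the lemma preceding the assumptions) one has $\int_{\R^N}2_*H(u)\,dx<\int_{\R^N}h(u)u\,dx$ for every nontrivial $u\in H^m(\R^N)$. Since every $u\in\cM$ is nontrivial by definition of $\cM$, this strict inequality forces $\vp_u''(1)<0$, so in particular $\vp_u''(1)\ne 0$, proving the set is empty. No serious obstacle is expected; the only care needed is bookkeeping the scaling derivatives and verifying that $2_*=\frac{2(N+2m)}{N}$ matches the algebraic coefficient produced by the $\cM$-constraint.
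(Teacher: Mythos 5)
Your proof is correct and follows essentially the same route as the paper: combining $M(u)=0$ with the expression for $\vp_u''(1)$ reduces the claim to $\int_{\R^N} 2_{*}H(u)-h(u)u \, dx \neq 0$, which follows from (A4) via the integral characterization of $\preceq$ because $u\neq 0$. The only (harmless) difference is that you compute $\vp_u''(1)$ explicitly and get the slightly stronger conclusion $\vp_u''(1)<0$, whereas the paper phrases the same computation as a contradiction argument.
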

\begin{proof}
If by contradiction there exists $u \in \cM$ with $\vp_u''(1) = 0$, then from the identities $M(u) = \vp_u''(1) = 0$ we obtain
\[
\int_{\R^N} 2_* H(u) - h(u)u \, dx = 0,
\]
in contrast with (A4) because $u \ne 0$.
\end{proof}

\begin{proof}[Proof of Theorem \ref{th:main}]
From Lemma \ref{lem:c_0attained}, let $\tu \in \cD \cap \cM$ such that $J(u) = c$. Note that for every $v \in \cS \cap \cM$ the functional $\bigl(\Phi'(v),M'(v)\bigr) \colon X_{\cG(K)}^m \to \R^2$ is surjective, where $\Phi(v) := |v|_2^2$: this can be proved as in \cite[Proof of Lemma 5.2]{Soave} using Lemma \ref{lem:noM0}. Then, from \cite[Proposition A.1]{MS}, there exist Lagrange multipliers $\theta\in\R$ and $\lambda \ge 0$ such that $\tu\in\cD\cap\cM$ solves
$$
(-\Delta)^m \tu + \mu \frac{\tu}{|y|^{2m}}-g(\tu)+\lambda \tu +\theta \left( (-\Delta)^m \tu+ \mu \frac{\tu}{|y|^{2m}}-\frac{N}{4m}h(\tu)\right)=0,
$$
that is,
\begin{equation}\label{eq:mu}
(1+\theta)\left( (-\Delta)^m \tu +  \mu \frac{\tu}{|y|^{2m}} \right)+\lambda\tu =g(\tu)+\frac{N}{4m}\theta h(\tu).
\end{equation}
If $\theta=-1$, then (A4) and (A5) yield
\begin{align*}
0 \leq \lambda \int_{\R^N} \tu^2 \, dx &= \int_{\R^N} g(\tu)\tu+\frac{N}{4m}\theta h(\tu) \tu \, dx = \frac{N}{4m} \int_{\R^N} \frac{4m}{N} g(\tu)\tu- h(\tu) \tu \, dx \\
&< \frac{N}{2m} \int_{\R^N} \frac{4m}{N}  G(\tu)-  H(\tu) \, dx \le 0,
\end{align*}
which is a contradiction. Consequently, $\tu$ satisfies the Nehari and Poho\v{z}aev identities related to equation \eqref{eq:mu}, whence
\[
(1+\theta) [\tu]_\mu^2 = \frac{N}{2m} \int_{\R^N} H(\tu) + \frac{N}{4m} \theta \bigl( h(\tu)\tu - 2 H(\tu) \bigr) \, dx.
\]
This and $\tu \in \cM$ imply
\[
\theta \int_{\R^N} h(\tu)\tu - 2_* H(\tu) \, dx = 0,
\]
hence $\theta = 0$ from (A4). Therefore, from \eqref{eq:mu}, we obtain that $\tu$ is a week solution to the differential equation in \eqref{eq}. Observe that $\lambda = 0$ if $\tu \in \cD \setminus \cS$, thus the proof is concluded once we have proved that $\lambda > 0$. Suppose by contradiction that $\lambda = 0$. Then $\tu$ satisfies
\[
(-\Delta)^m \tu+\frac{\mu}{|y|^{2m}} \tu=  g(\tu)
\]
and so, from the Nehari identity and $\tu \in \cM$, we obtain
\[
2N \int_{\R^N} G(\tu) \, dx = (N-2m) \int_{\R^N} g(\tu) \tu \, dx,
\]
in contrast with (A5). Observe that  if, in addition, $g$ is odd and $m=1$, then the minimizing sequence $(u_n)$ in Lemma \ref{lem:c_0attained} can be replaced by $(|u_n|)$, and finally we get  $\tu$ is nonnegative.
\end{proof}

\begin{proof}[Proof of Theorem \ref{th:generalization}]
We argue as in Lemmas \ref{lem1} -- \ref{lem:c-positive}. If $\mu=0$, then 
Theorem \ref{ThGerard} holds true for a sequence $(u_n)\subset H^m(\R^N)$. Then similarly as in proof of Theorem \ref{th:main} we show the existence of a ground state solution $u \in \cS \cap \cM_0$  in $H^m (\R^N)$ of \eqref{eq} with $\mu = 0$.  Observe that  if, in addition, $g$ is odd and $m=1$, then the minimizing sequence $(u_n)$ in Lemma \ref{lem:c_0attained} can be replaced with $(|u_n|^*)$, where $|u_n|^*$ stands for the Schwarz symmetrization of $|u_n|$. Finally we get a nonnegative and radially symmetric normalized ground state solution. Then, from the maximum principle, $u$ is positive on $\R^N$.
\end{proof}

Note that, arguing as in \cite[Lemma 3.1]{GuoMed} and using Lemma \ref{lem:Hardy}, we can show the following property.

\begin{Lem}\label{lem:MedGuo-polyharm}
Suppose that $N \geq K > 2m$. If $(y_n) \subset \R^K$, $|y_n| \to +\infty$, then for any $u \in H^m(\R^N)$,
$$
\int_{\R^N} \frac{1}{|y|^{2m}} | u(\cdot - y_n)|^2 \, dx \to 0.
$$
\end{Lem}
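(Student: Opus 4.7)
The plan is to combine the density of $C_c^\infty(\R^N)$ in $H^m(\R^N)$ with an elementary support argument, using Hardy's inequality from Lemma \ref{lem:Hardy} to control the approximation error uniformly in $n$. Fix $\eps>0$ and pick $u_\eps\in C_c^\infty(\R^N)$ with $\|u-u_\eps\|_{H^m}<\eps$. By the triangle inequality for the seminorm $v\mapsto\bigl(\int_{\R^N}v^2/|y|^{2m}\,dx\bigr)^{1/2}$,
\begin{equation*}
\Bigl(\int_{\R^N}\frac{|u(\cdot-(y_n,0))|^2}{|y|^{2m}}\,dx\Bigr)^{1/2}\leq \Bigl(\int_{\R^N}\frac{|u_\eps(\cdot-(y_n,0))|^2}{|y|^{2m}}\,dx\Bigr)^{1/2}+\Bigl(\int_{\R^N}\frac{|(u-u_\eps)(\cdot-(y_n,0))|^2}{|y|^{2m}}\,dx\Bigr)^{1/2},
\end{equation*}
so I would estimate the two pieces separately.

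For the smooth piece, choose $R>0$ with $\supp u_\eps\subset B(0,R)\subset\R^N$. Then $u_\eps(\cdot-(y_n,0))$ is supported in $B((y_n,0),R)$, where $|y-y_n|\leq R$, so $|y|\geq |y_n|-R$ once $|y_n|>R$. This yields the crude bound
\begin{equation*}
\int_{\R^N}\frac{|u_\eps(\cdot-(y_n,0))|^2}{|y|^{2m}}\,dx\leq \frac{|u_\eps|_2^2}{(|y_n|-R)^{2m}}\to 0 \quad\hbox{as }n\to+\infty.
\end{equation*}
For the error term I would apply Lemma \ref{lem:Hardy} (which is available because $K>2m$) to the translate $(u-u_\eps)(\cdot-(y_n,0))\in H^m(\R^N)\subset\cD^{m,2}(\R^N)$. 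Since the seminorm $|\nabla^m\cdot|_2$ is translation invariant, the right-hand side reduces to a constant multiple of $|\nabla^m(u-u_\eps)|_2^2\lesssim \eps^2$, uniformly in $n$.

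Taking $\limsup_n$ in the first display and then letting $\eps\to 0^+$ concludes the proof. The only subtle point, and the reason for this split, is that Lemma \ref{lem:Hardy} controls $\int v^2/|y|^{2m}\,dx$ only by $|\nabla^m v|_2^2$, a quantity insensitive to the position of the function's mass relative to the singular set $\{y=0\}$; this is precisely what lets us estimate the error uniformly in $n$, even though the translated tails $(u-u_\eps)(\cdot-(y_n,0))$ may have most of their mass very far from $\{y=0\}$.
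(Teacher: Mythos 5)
Your proof is correct and is essentially the argument the paper has in mind: the paper only sketches the proof by pointing to \cite[Lemma 3.1]{GuoMed} together with the Hardy inequality of Lemma \ref{lem:Hardy}, and your density-plus-compact-support splitting, with Hardy giving a bound on the remainder that is uniform in $n$ (legitimate since $N\ge K>2m$ ensures $H^m(\R^N)\subset\cD^{m,2}(\R^N)$), is precisely that argument written out in detail.
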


\begin{altproof}{Theorem \ref{thm:nonexistence}}
Suppose that there is $u_1\in\cM_\mu\cap\cD$ such that $J(u_1)=\inf_{\cM_\mu\cap\cD}J$, where
$$\cM_\mu:=\Big\{u\in H^{m}(\R^N)\setminus\{0\}\ :\ \int_{\R^N} |\nabla^{m} u|^2 \, dx + \int_{\R^N} \frac{\mu}{|y|^{2m}} u^2 \, dx=\frac{N}{2m} \int_{\R^N} H(u) \, dx \Big\}.$$
We take $u_0\neq 0$ as in Theorem \ref{th:generalization}.
Let $u_\theta(y,z):=u_0(y+\theta,z)$ for $\theta\in\R^K$. By  Lemma \ref{lem:phi}, there exists  $s_\theta>0$ such that $s_\theta^{\frac{N}{2}}u_\theta(s_\theta\cdot)\in\cM$. Observe that
$$
\frac{N}{2m}\int_{\R^N}H(s_\theta^{N/2} u_0)(s_\theta^{N/2})^{-2_{*}}\,dx=\frac{N}{2m}\int_{\R^N}H(s_\theta^{N/2} u_\theta)(s_\theta^{N/2})^{-2_{*}}\,dx=[u_\theta]_\mu^2\to \int_{\R^N}|\nabla^{m} u_0|^2\,dx
$$
as $|\theta|\to\infty$, since the integral with the singular potential tends to $0$, see Lemma \ref{lem:MedGuo-polyharm}. By an inspection of proof of Lemma \ref{lem:phi}, note that
$$
(0, +\infty) \ni s \mapsto \xi(s):=\int_{\R^N} H(s^{N/2} u_0)(s^{N/2})^{-2_{*}} \, dx \in (0,+\infty)
$$
is increasing, surjective and $\lim_{s\to 0^+}\xi(s)=0$. Therefore we may assume that $s_{\theta_n}\to s_\infty$ as $n\to\infty$ for some $s_\infty>0$ and $(\theta_n)\subset \R^K$ such that $|\theta_n|\to+\infty$. Observe that
$$\int_{\R^N} \frac{1}{|y|^{2m}} u_{\theta_n}^2 \, dx=\int_{\R^N} \frac{1}{|y-\theta_n|^{2m}} u_{0}^2 \, dx\to 0,$$
and by Lemma \ref{lem:phi} we get
$$\lim_{n\to+\infty}J(s_{\theta_n}^{\frac{N}{2}}u_{\theta_n}(s_{\theta_n}\cdot))=J_0(s_\infty^{\frac{N}{2}}u_0(s_\infty\cdot))\leq J_0(u_0).$$
Observe that $s^{\frac{N}{2}}u_1(s\cdot)\in\cM_0$ for some $s>0$, $s^{\frac{N}{2}}u_1(s\cdot)\in\cD$
and
$$
J(u_1)\geq J(s^{\frac{N}{2}}u_1(s\cdot))>J_0(s^{\frac{N}{2}}u_1(s\cdot))\geq J_0(u_0).$$
Again, by Lemma \ref{lem:phi}, 
$$\lim_{n\to+\infty}J(s_{\theta_n}^{\frac{N}{2}}u_{\theta_n}(s_{\theta_n}\cdot))\geq J(u_1)=\inf_{\cM\cap\cD}J
$$
and we get a contradiction.

We argue similarly about the minimization problem $\inf_{\cM_\mu\cap\cS}$.
\end{altproof}

\section{The curl-curl problem}\label{sec:curlcurl}

We start with a short introduction about the definition of $\nabla \times \nabla \times$ in dimension $N\ge3$. Let us recall that $\nabla \times$ is usually defined in dimension $3$ and that for $\mathbf{U} \in \cC^2 (\R^3; \R^3)$ we have the following identity
\begin{equation}\label{curlN}
\curl \curl \mathbf{U} = \nabla \left( \div \mathbf{U} \right) - \Delta \mathbf{U}.
\end{equation}
Since the right-hand side is well-defined also for vector-valued functions $\mathbf{U} \in \cC^2 (\R^N; \R^N)$, we can treat \eqref{curlN} as the definition of the curl-curl operator in $\R^N$, $N \ge 3$. Similarly, we can propose the weak formulation of this operator. Namely, for any $\mathbf{U}, \mathbf{V} \in \cC^1 (\R^N; \R^N)$ such that $\nabla \mathbf{U}, \nabla \mathbf{V} \in L^2 (\R^{N \times N})$ we define
$$
\int_{\R^N} \langle \curl \mathbf{U}, \curl \mathbf{V} \rangle \, dx := \int_{\R^N} \langle \nabla \mathbf{U}, \nabla \mathbf{V} \rangle - (\div \mathbf{U}) (\div \mathbf{V}) \, dx.
$$

In this section, we are interested in \eqref{e-normcurl} with $N\ge3$, where $\rho>0$ is prescribed and $(\lambda,\UU)$ is the unknown. Looking for classical solutions of the form 
\begin{equation}\label{e-uU}
\mathbf{U}(x) = \frac{u(x)}{r} \left( \begin{array}{c}
-x_2 \\ x_1 \\ 0
\end{array} \right), \quad r = \sqrt{x_1^2+x_2^2}, \quad 0 = 0_{\R^{N-2}},
\end{equation}
we see that $\div \mathbf{U} = 0$ and $u$ satisfies
$$
\left\{ \begin{array}{ll}
-\Delta u + \frac{u}{r^2} + \lambda u = g(u) & \quad \mbox{in } \R^N \\
\int_{\R^N} |u|^2 \, dx = \rho, &
\end{array} \right.
$$
where $f \colon \rn \to \rn$ and $g \colon \R \to \R$ are related by \eqref{e-fgh}, i.e., the problem \eqref{eq} with $m=1$, $\mu=1$, $K = 2$. The same equivalence holds also for weak solutions, see Theorem \ref{T:equiv} and Proposition \ref{P:equiv} below (cf. also \cite[Theorem 2.1]{GMS}, \cite[Theorem 1.1]{Bieg}).  Define $F,\overline{H} \colon \R^N \rightarrow \R$ and $\overline{h} \colon \rn \to \rn$ as $F(\UU) := \int_0^1 f(t\UU) \cdot \UU \, dt$, $\overline H(\UU) := f(\UU) \cdot \UU - 2 F(\UU)$, and $\overline h:= \nabla \overline H$. Observe that $\overline H$ is a radial function, thus $\overline{H}(\UU) = H(u)$ and the pair $(\overline{h},h)$ satisfies \eqref{e-fgh}.

Since we are looking for solutions of the form \eqref{e-uU}, we introduce the following family of functions
$$
\cF := \left\{ \UU \colon \R^N \rightarrow \R^N \ : \ \UU(x) = \frac{u(x)}{r} \left( \begin{array}{c}
-x_2 \\ x_1 \\ 0 
\end{array} \right) \mbox{ for some } \cG(2)\mbox{-invariant } u\colon\R^N\to\R \right\}
$$
and the function space
$$
H_\cF := H^1 (\R^N, \R^N) \cap \cF.
$$
Note that if $\mathbf{U} \in H_\cF$, then $\mathbf{U}$ is ($\SO(2) \times \mathrm{id}$)-equivariant. To utilize the method described in Section \ref{sec:main}, we introduce the $L^2$-sphere and the $L^2$-disc
\begin{align*}
\cD & := \left\{\UU\in L^2(\R^N, \R^N) \ : \ \int_{\rn}|\UU|^2\,dx\le\rho \right\}, \\
\cS & := \left\{\UU\in L^2(\R^N, \R^N) \ : \ \int_{\rn}|\UU|^2\,dx=\rho \right\}.
\end{align*}
Let $E : H_\cF \rightarrow \R$ denote the energy functional associated with \eqref{e-normcurl}, namely,
$$
E (\UU) := \int_{\rn} \frac12 |\nabla\times\UU|^2 - G(\UU) \, dx, \quad \UU \in H_\cF. 
$$
Then, we say that $\UU$ is a normalized solution to \eqref{e-normcurl} if it is a critical point of $E$ restricted to $\cS$. 

We note the following equivalence of weak solutions.

\begin{Th}\label{T:equiv}
Assume $g\colon\R\to\R$ is a continuous function and $|g(u)|\lesssim |u|+|u|^{2^*-1}$ for every $u\in\R$; let also $f$ satisfy \eqref{e-fgh}. Finally, let $u \colon \rn \to \R$ and $\UU \colon \rn \to \rn$ be related by \eqref{e-uU}  for a.e. $x\in\R^N$. Then
$\UU\in H_\cF$ if and only if $u\in X_{\cG(2)}^1$ and, in such a case, $\div \UU = 0$ and
$E(\UU) = J(u)$, where $J$ is given by \eqref{def:J}. Moreover,
$(\lambda,\UU)$ is a solution to \eqref{e-normcurl} 
if and only if $(\lambda,u)$ is a solution to \eqref{eq} with $m=1$, $\mu = 1$ and $K = 2$.
\end{Th}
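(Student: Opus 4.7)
The strategy is to reduce the theorem to a handful of pointwise algebraic identities tying $\UU=u\hat\theta$ to $u$, where $\hat\theta:=r^{-1}(-x_2,x_1,0,\ldots,0)$ on $\{r>0\}$. The ingredients I plan to use throughout are: $|\hat\theta|^2\equiv 1$ (whence $\sum_i \hat\theta_i\partial_j\hat\theta_i\equiv 0$); $|\nabla\hat\theta|^2=r^{-2}$; and the consequence of the $\cG(2)$-invariance of $u=\tilde u(r,z)$, namely $-x_2\partial_1 u+x_1\partial_2 u=0$, which when combined with the explicit formulas for $\partial_j\hat\theta_i$ forces $\nabla u\cdot\nabla\hat\theta_i=0$ for every $i$.

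Starting from $\partial_j\UU_i=\partial_j u\,\hat\theta_i+u\,\partial_j\hat\theta_i$, expanding and using the orthogonality relations above yield the pointwise a.e.\ identities
$$
|\UU|^2=u^2,\qquad |\nabla\UU|^2=|\nabla u|^2+\frac{u^2}{r^2}.
$$
Together with the $\cG(2)$-invariance built into $\cF$, these show $\UU\in H_\cF$ if and only if $u\in X_{\cG(2)}^1$. A direct calculation produces $\div\UU=(x_1\partial_2 u-x_2\partial_1 u)/r=0$, so the weak curl-curl inner product defined in the paper collapses to $\int|\curl\UU|^2\,dx=\int|\nabla\UU|^2\,dx=\int(|\nabla u|^2+u^2/r^2)\,dx$. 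Using \eqref{e-fgh} and the oddness of $g$, the substitution $s=t|u|$ gives $F(\UU)=\int_0^1 g(t|u|)|u|\,dt=G(|u|)=G(u)$, and therefore $E(\UU)=J(u)$.

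For the equivalence of weak solutions, the same bilinear pairing handles both directions at once. Given any $\phi\in X_{\cG(2)}^1$, the field $\Phi:=\phi\hat\theta$ lies in $H_\cF$ (by the space equivalence applied to $\phi$ in place of $u$), satisfies $\div\Phi=0$, and polarizing the identities above yields
$$
\int\langle\nabla\UU,\nabla\Phi\rangle\,dx=\int\Bigl(\nabla u\cdot\nabla\phi+\frac{u\phi}{r^2}\Bigr)dx,\qquad f(\UU)\cdot\Phi=g(u)\phi\ \text{a.e.}
$$
Hence the weak form of \eqref{e-normcurl} tested against $\Phi$ is literally the weak form of \eqref{eq} tested against $\phi$, with the same Lagrange multiplier $\lambda$. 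Since by definition of $\cF$ every $\Phi\in H_\cF$ is of the form $\phi\hat\theta$ for a unique $\phi\in X_{\cG(2)}^1$ and conversely, the two weak formulations are tautologically equivalent. The main labor is algebraic: carefully verifying the pointwise identities $|\nabla\UU|^2=|\nabla u|^2+u^2/r^2$, $\div\UU=0$, and $\nabla u\cdot\nabla\hat\theta_i=0$ through direct differentiation in Cartesian coordinates, where the $\cG(2)$-invariance of $u$ (expressed via $\partial_j u=\partial_r\tilde u\cdot x_j/r$ for $j=1,2$) ensures that all potentially problematic cross-terms telescope to zero.
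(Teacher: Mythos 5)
Your computational core is sound and, in fact, is essentially the argument hiding behind the paper's one-line proof: the paper does not prove Theorem \ref{T:equiv} at all but cites \cite[Theorem 1.1]{Bieg} (and \cite[Section 4.2]{PhD} for $N\ge3$), whereas you reconstruct the computation directly. The identities you rely on are correct: with $\hat\theta=r^{-1}(-x_2,x_1,0,\dots,0)$ one has $|\hat\theta|=1$, $\sum_i\hat\theta_i\partial_j\hat\theta_i=0$, $|\nabla\hat\theta|^2=r^{-2}$, $\div\hat\theta=0$, and the $\cG(2)$-invariance of $u$ kills the angular derivative, so $|\UU|^2=u^2$, $|\nabla\UU|^2=|\nabla u|^2+u^2/r^2$, $\div\UU=0$, and (using that $g$ is odd, as the paper assumes when introducing \eqref{e-fgh}) $F(\UU)=G(|u|)=G(u)$ and $f(\UU)\cdot(\phi\hat\theta)=g(u)\phi$; note the last two equalities genuinely need oddness, since \eqref{e-fgh} only constrains $g$ on $[0,+\infty)$, so it is good that you flag it. Two technical points are glossed over but are routine: the product rule and the pointwise identities must be justified for $u$ merely weakly differentiable (work on $\{r>0\}$, where $\hat\theta$ is smooth), and in the direction $\UU\in H_\cF\Rightarrow u\in X^1_{\cG(2)}$ you must pass from $\nabla u\in L^2(\{r>0\})$ to $u\in H^1(\R^N)$, which uses that $\{r=0\}=\{0\}\times\R^{N-2}$ has codimension $2$ and is removable for $H^1$; this matters precisely because $K=2m=2$, where no Hardy inequality is available and $\int u^2/r^2\,dx<\infty$ is part of the definition of $X^1_{\cG(2)}$.

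The one place where your argument is weaker than the cited result is the final equivalence of solutions. You declare it ``tautological'' because every test field in $H_\cF$ is of the form $\phi\hat\theta$; this indeed proves that $u$ solves \eqref{eq} weakly (against all $\phi\in X^1_{\cG(2)}$) if and only if $\UU$ satisfies the weak curl--curl equation tested against fields in $H_\cF$. That matches the paper's operational definition (critical points of $E$ restricted to $\cS$, with $E$ defined only on $H_\cF$), but the statement ``$(\lambda,\UU)$ is a solution to \eqref{e-normcurl}'' as in \cite[Theorem 1.1]{Bieg} means the weak formulation against \emph{arbitrary} test fields in $\R^N$, not only the $\hat\theta$-directed, cylindrically symmetric ones. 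The substantive direction --- that a weak solution $u$ of the scalar equation produces $\UU=u\hat\theta$ solving the vector equation against every test field --- requires an extra argument (an equivariant decomposition of an arbitrary test field and the verification that the components pointwise orthogonal to $\hat\theta$ pair to zero with $-\Delta\UU+\lambda\UU-f(\UU)$, or a symmetric-criticality-type reasoning), and this is exactly the content of the reference the paper leans on. As written, your proposal does not supply that step, so either restrict the claim to the paper's symmetric solution concept or add this decomposition argument.
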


When $N=3$, this follows from \cite[Theorem 1.1]{Bieg}, while the generalization to the case $N\ge3$ is trivial,  cf. \cite[Section 4.2]{PhD}.

If $\UU \in H_\cF$ is a normalized solution, then since $\div \UU = 0$, it is clear from \eqref{curlN} that $\UU$ is a normalized solution to the vectorial Schr\"odinger equation
$$
-\Delta \UU + \lambda \UU = f(\UU)
$$
and therefore, it satisfies the corresponding Nehari and Poho\v{z}aev identities. Thus, every normalized solution lies in the set
$$
\cQ := \left\{\UU \in H_\cF \setminus \{0\} : \int_{\rn} |\nabla\times\UU|^2 \, dx = \frac{N}2 \int_{\rn} \overline H(\UU) \, dx\right\}.
$$
One can easily compute that $\cQ$ is a differentiable submanifold in $H_\cF$ of class $\cC^1$ and codimension $1$ (cf. Lemma \ref{lem:manifold}). 

Let $\UU \in H_\cF$. Then one has $|\UU|=|u|$, $|\nabla\times\UU|_2=|\nabla\UU|_2=|\nabla u|_2$, $\int_{\rn}F(\UU)\,dx=\int_{\rn}G(u)\,dx$, and $\int_{\rn}f(\UU)\cdot\UU\,dx=\int_{\rn}g(u)u\,dx$, which immediately implies the following property.

\begin{Prop}\label{P:equiv}
Assume $g \colon \R \to \R$ is continuous and $|g(u)|\lesssim |u|+|u|^{2^*-1}$ for every $u\in\R$; let also $f$ satisfy \eqref{e-fgh} and define $u,\UU$ via \eqref{e-uU}. Then $\UU \in \cQ$ if and only if $u \in \cM$, where $\cM$ is given by \eqref{def:M}.
\end{Prop}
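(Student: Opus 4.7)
The plan is to reduce Proposition \ref{P:equiv} to the dictionary already established in Theorem \ref{T:equiv} together with the pointwise relation \eqref{e-fgh}; once one has the right algebraic identities between the scalar and vectorial quantities, the equivalence becomes a direct translation of the defining constraints. Since this is essentially the statement that the Poho\v{z}aev manifold for the curl-curl problem corresponds under \eqref{e-uU} to the Poho\v{z}aev manifold for the polyharmonic problem, I expect no genuine obstacle, only careful bookkeeping.

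First I would invoke Theorem \ref{T:equiv}: because $\UU$ and $u$ are linked by \eqref{e-uU}, membership $\UU\in H_\cF$ is equivalent to $u\in X_{\cG(2)}^1$, and the nontriviality conditions also match via $|\UU|=|u|$. Thus the underlying function classes in the definitions of $\cQ$ and $\cM$ line up exactly.

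Next I would translate the two sides of the constraint. For the gradient term, the identity $E(\UU)=J(u)$ from Theorem \ref{T:equiv} combined with the preamble relation $\int_{\rn}F(\UU)\,dx=\int_{\rn}G(u)\,dx$ yields
\begin{equation*}
\int_{\rn}|\nabla\times\UU|^2\,dx=\int_{\rn}|\nabla u|^2+\frac{u^2}{|y|^2}\,dx,
\end{equation*}
which is precisely the left-hand side of the constraint defining $\cM$ for $m=1$, $\mu=1$, $K=2$. For the nonlinearity, I would compute $\overline H(\UU)$ directly from \eqref{e-fgh}: one has $f(\UU)\cdot\UU=g(|\UU|)|\UU|$ and $F(\UU)=\int_0^1 g(t|\UU|)|\UU|\,dt=G(|\UU|)$, so
\begin{equation*}
\overline H(\UU)=f(\UU)\cdot\UU-2F(\UU)=g(|\UU|)|\UU|-2G(|\UU|)=H(|\UU|)=H(u),
\end{equation*}
where the last equality uses $|\UU|=|u|$ together with the oddness of $g$ (which makes $H$ even). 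Integrating gives $\int_{\rn}\overline H(\UU)\,dx=\int_{\rn}H(u)\,dx$.

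Once these two identities are in hand, the defining equation of $\cQ$ is verbatim the defining equation of $\cM$, and the equivalence follows in both directions. The only subtlety worth flagging is that one must not confuse $\int_{\rn}|\nabla\times\UU|^2\,dx$ with the purely Euclidean $\int_{\rn}|\nabla u|^2\,dx$: the equality of the curl-curl Dirichlet form with the $X_{\cG(2)}^1$-norm is precisely where the singular potential $1/|y|^2$ enters, and is the conceptual content hidden inside Theorem \ref{T:equiv}.
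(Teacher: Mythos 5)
Your proposal is correct and follows essentially the same route as the paper, whose own ``proof'' is precisely the list of identities $|\UU|=|u|$, $|\nabla\times\UU|_2^2=[u]_\mu^2$ (with $m=1$, $\mu=1$, $K=2$), $\int_{\rn}F(\UU)\,dx=\int_{\rn}G(u)\,dx$ and $\overline H(\UU)=H(u)$ stated just before Proposition \ref{P:equiv}, after which both constraints coincide verbatim. Your explicit remark that $\overline H(\UU)=H(|u|)=H(u)$ uses the oddness of $g$ (an assumption the paper makes globally for the curl-curl setting) is a fair point of care, and deducing the quadratic-form identity from $E(\UU)=J(u)$ rather than by direct computation is only a cosmetic difference.
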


The main result of this section reads as follows.

\begin{Th}\label{T:Vec}
If $f$ satisfies \eqref{e-fgh}, $g$ satisfies (A0)--(A5), and \eqref{eq:Hstrict} holds, then there exist $\lambda>0$ and $\UU \in \cQ \cap \cS$ such that $(\lambda,\UU)$ is a solution to \eqref{e-normcurl} and $E(\UU)=\inf_{\cQ \cap \cD} E = \inf_{\cQ \cap \cS} E > 0$.
\end{Th}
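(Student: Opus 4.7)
The proof plan is essentially to reduce everything to the scalar problem via the correspondence \eqref{e-uU} and then invoke Theorem \ref{th:main}. First I would check the hypotheses of Theorem \ref{th:main} for the parameters $m=1$, $\mu=1$, $K=2$: we have $K=2m$, $\mu=1\ge 0$, so \eqref{M} holds; (A0)--(A5) are assumed; and \eqref{eq:Hstrict} is assumed. Theorem \ref{th:main} therefore produces $\lambda>0$ and $u\in\cS\cap\cM$ with
\[
J(u)=\inf_{\cD\cap\cM}J=\inf_{\cS\cap\cM}J>0,
\]
such that $u$ weakly solves \eqref{eq} in $X^1_{\cG(2)}$.

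Next I would define $\UU$ through \eqref{e-uU}. By Theorem \ref{T:equiv}, $\UU\in H_\cF$ with $\div\UU=0$, $E(\UU)=J(u)$, and $(\lambda,\UU)$ solves \eqref{e-normcurl}. By Proposition \ref{P:equiv}, $\UU\in\cQ$. Finally the $L^2$-constraint transfers automatically because $|\UU|=|u|$ pointwise, hence $\int_{\rn}|\UU|^2\,dx=\int_{\rn}|u|^2\,dx=\rho$, so $\UU\in\cQ\cap\cS$. In particular $E(\UU)=J(u)>0$.

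It remains to identify $E(\UU)$ with the infima $\inf_{\cQ\cap\cD}E$ and $\inf_{\cQ\cap\cS}E$. For any $\VV\in H_\cF$ the associated scalar $v\in X^1_{\cG(2)}$ satisfies $E(\VV)=J(v)$, $\int_{\rn}|\VV|^2\,dx=\int_{\rn}|v|^2\,dx$, and by Proposition \ref{P:equiv}, $\VV\in\cQ$ iff $v\in\cM$. Hence the maps $\VV\mapsto v$ send $\cQ\cap\cD$ into $\cM\cap\cD$ and $\cQ\cap\cS$ into $\cM\cap\cS$, preserving the value of the energy. Conversely, given any $v\in\cM\cap\cD$ (resp.\ $\cM\cap\cS$), the associated $\VV$ built from \eqref{e-uU} lies in $\cQ\cap\cD$ (resp.\ $\cQ\cap\cS$) by Theorem \ref{T:equiv} and Proposition \ref{P:equiv}, so the correspondence is energy-preserving in both directions. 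Therefore
\[
\inf_{\cQ\cap\cD}E=\inf_{\cM\cap\cD}J=J(u)=E(\UU)=\inf_{\cM\cap\cS}J=\inf_{\cQ\cap\cS}E,
\]
which yields the claim.

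Because the heavy lifting is performed by Theorem \ref{th:main} and the bijective correspondence of Theorem \ref{T:equiv}/Proposition \ref{P:equiv}, there is no genuine analytical obstacle left; the only point requiring care is the bookkeeping of the four equivalences (membership in $H_\cF\leftrightarrow X^1_{\cG(2)}$, in $\cQ\leftrightarrow\cM$, in $\cS\leftrightarrow\cS$, and equality $E(\UU)=J(u)$), all of which are explicitly provided in Section \ref{sec:curlcurl}.
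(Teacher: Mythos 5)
Your proposal is correct and follows exactly the paper's route: the paper proves Theorem \ref{T:Vec} by citing Theorem \ref{T:equiv}, Theorem \ref{th:main}, and Proposition \ref{P:equiv}, and you simply spell out the same reduction (verifying \eqref{M} for $m=1$, $\mu=1$, $K=2$ and the energy-preserving correspondence between $\cQ\cap\cD$, $\cQ\cap\cS$ and $\cM\cap\cD$, $\cM\cap\cS$) in more detail.
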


\begin{proof}[Proof of Theorem \ref{T:Vec}]
It follows from Theorem \ref{T:equiv}, Theorem \ref{th:main}, and Proposition \ref{P:equiv}.
\end{proof}

\section*{Acknowledgements}
Bartosz Bieganowski and Jarosław Mederski were partly supported by the National Science Centre, Poland, (Grant No. 2017/26/E/ST1/00817).
Jacopo Schino is a member of GNAMPA (INdAM) and was partly supported by the National Science Centre, Poland (Grant No. 2020/37/N/ST1/00795) and the GNAMPA project \textit{Metodi variazionali e topologici per alcune equazioni di Schr\"odinger nonlinari}. We would like to thank the referees for valuable comments and pointing out the recent work \cite{Liu_Zhao}, where the authors obtained further results in case $m=1$ based on our paper.

%

\end{document}